\documentclass[12pt,reqno]{amsart}

\usepackage{amsmath}
\usepackage{amssymb}
\usepackage{amstext}
\usepackage[T1]{fontenc}
\usepackage[utf8]{inputenc}

\usepackage{microtype}
\usepackage[a4paper,top=27mm,bottom=28mm,left=29mm,right=29mm]{geometry}
\usepackage{amsmath,amssymb,mathtools}
\usepackage{enumitem}
\usepackage[numbers,sort&compress]{natbib}
\usepackage[dvipsnames]{xcolor}
\usepackage{hyperref}
\usepackage{bookmark}
\usepackage{a4wide}
\usepackage{graphicx}
\usepackage{tikz}
\allowdisplaybreaks \numberwithin{equation}{section}
\usepackage{color}
\usepackage{cases}

\hypersetup{
  colorlinks=true,
  linkcolor=MidnightBlue,
  citecolor=MidnightBlue,
  urlcolor=MidnightBlue,
  pdfauthor={},
  pdftitle={Asymmetric stationary  Water Waves in Finite Depth: Expanded Introduction}
}

\setlist[enumerate]{leftmargin=2.15em,itemsep=.45em,topsep=.45em}
\setlist[itemize]{leftmargin=1.85em,itemsep=.25em,topsep=.35em}
\allowdisplaybreaks

\newcommand{\R}{\mathbb{R}}

\numberwithin{equation}{section}

\newtheorem{theorem}{Theorem}[section]
\newtheorem{proposition}[theorem]{Proposition}

\newtheorem{lemma}[theorem]{Lemma}

\theoremstyle{definition}

\theoremstyle{remark}
\newtheorem{remark}[theorem]{Remark}

\newcommand{\ep}{\varepsilon}

\begin{document}

\title
[Asymmetric Water Waves]{Asymmetric Periodic Water Waves in Finite Depth}

\author{Yuchen Wang}
\address{School of Mathematics, Tianjin Normal Univeristy \\ Tianjin, China}
\email{wangyuchen@mail.nankai.edu.cn}
	
\author{Shusen Yan}
\address{ Central China Normal University\\ Wuhan, Hubei, China}
\email{syan@ccnu.edu.cn}

\author{Maolin Zhou}
\address{Chern Institute of Mathematics and LPMC, Nankai University, Tianjin, China}
\email{zhouml123@nankai.edu.cn}

%\thanks{This work is partially supported by ARC}

\begin{abstract}

We consider the full two-dimensional water-wave equations in finite
depth, with and without surface tension.
Water waves have been studied mathematically for more than two
centuries. Reflection symmetry remains a common assumption in
constructions of periodic steady waves, although asymmetric profiles
have been observed in experiments and numerical computations.
We construct families of periodic stationary solutions whose free
surfaces have no axis of reflection, in both the pure-gravity and
capillary--gravity cases.
To the best of our knowledge, this provides the first rigorous
construction of asymmetric periodic pure-gravity waves for the full
two-dimensional water-wave problem in finite depth.
The proof combines localized vorticity laws with a Lyapunov--Schmidt
reduction. In the pure-gravity case, we introduce a small background
shear flow to obtain an invertible boundary operator and couple
the interior and free-surface approximations at leading order.
\end{abstract}

\maketitle

Keywords: water wave equations, free boundary, stationary  waves.

\section{Introduction}

We consider two-dimensional stationary water waves in finite depth.
The Euler equations are
\begin{equation}
    \label{eq:euler}
    \begin{aligned}
        (\mathbf{u}\cdot\nabla)\mathbf{u}
        +\nabla P
        +g\boldsymbol{e}_2
        &=0
        &&\text{in }\Omega,\\
        \nabla\!\cdot\mathbf{u}
        &=0
        &&\text{in }\Omega,
    \end{aligned}
\end{equation}
in the domain $\Omega=\Omega_{\eta}
:=
\left\{
(y_1,y_2)\in\mathbb{R}^2:
0<y_2<\pi+\eta(y_1),
\quad \eta\in C^{2}(\R)
\right\}$, where \(\mathbf{u}=(u_1,u_2)\) is the fluid velocity,
\(P\) its pressure, and \(g>0\) the gravitational acceleration.
Here \(\boldsymbol{e}_1\) and \(\boldsymbol{e}_2\) are the unit vectors
in the horizontal and vertical directions, respectively.
The upper boundary of the fluid domain,
$(y_1,\pi+\eta(y_1))$, is the water--air interface,
and $y_2=0$ is the flat impermeable bottom.
Throughout this paper, we assume that
\[
    \pi+\eta(y_1)\geq\frac{\pi}{2}
    \qquad\text{for all }y_1\in\mathbb{R},
\]
so that the water--air interface and the bottom remain separated.
The kinematic boundary conditions are
\begin{equation}
    \label{eq:kinematic}
    \mathbf{u}\cdot\boldsymbol{n}=0
    \quad\text{on }y_2=\pi+\eta(y_1),
    \qquad
    u_2=0
    \quad\text{on }y_2=0,
\end{equation}
where the unit outward normal to the free surface is
\[
    \boldsymbol{n}:=
    \frac{(-\eta_{y_1},1)}
    {\sqrt{1+\eta_{y_1}^2}}.
\]
The pressure satisfies
\begin{equation}
    \label{eq:dynamic}
    P=\alpha^2\kappa+P_{\mathrm{atm}}
    \qquad\text{on }y_2=\pi+\eta(y_1)
\end{equation}
when surface tension is present, where
\[
    \kappa=
    -\frac{\eta_{y_1y_1}}
    {(1+\eta_{y_1}^2)^{3/2}}
\]
is the mean curvature of the free surface and $\alpha^2\geq0$
is the coefficient of surface tension.
The equations \eqref{eq:euler}--\eqref{eq:dynamic} describe the
\emph{stationary pure-gravity water-wave problem} if $\alpha=0$,
and the \emph{stationary capillary--gravity water-wave problem}
if $\alpha>0$.
Without loss of generality, we normalize $P_{\mathrm{atm}}=0$,
i.e., the region above the water--air interface is treated as a vacuum.

A \emph{periodic stationary wave} is a solution for which there
exists a constant \(L>0\) such that
\[
\eta(y_1+L)=\eta(y_1),\;
\mathbf{u}(y_1+L,y_2)=\mathbf{u}(y_1,y_2),\;
P(y_1+L,y_2)=P(y_1,y_2),
\quad \forall y_1\in\R.
\]
Since the bottom is flat, the water-wave system is invariant
under horizontal translations and reflections. We say that a stationary wave is \emph{symmetric}
if there exists \(y_*\in\mathbb{R}\) such that, for every
\(s\in\mathbb{R}\),
\begin{equation}\label{eq:symmetry}
    \begin{aligned}
        \eta(y_*+s)
        &=\eta(y_*-s),\\
        u_1(y_*+s,y_2)
        &=u_1(y_*-s,y_2),\\
        u_2(y_*+s,y_2)
        &=-u_2(y_*-s,y_2),\\
        P(y_*+s,y_2)
        &=P(y_*-s,y_2).
    \end{aligned}
\end{equation}
Otherwise, the wave is called \emph{asymmetric}, which is
understood modulo horizontal translations.
Our main result is the following existence theorem for
small-amplitude asymmetric two-dimensional water waves.

\begin{theorem}\label{th1-1}
For every $\alpha\geq0$, there exists a spatially periodic stationary
solution $(\mathbf{u},P,\eta)$ of \eqref{eq:euler},
\eqref{eq:kinematic}, and \eqref{eq:dynamic} whose free surface
is not symmetric about any vertical line; that is,
\[
    \eta(y_*+s)\not\equiv\eta(y_*-s)
    \qquad\text{for every }y_*\in\mathbb{R}.
\]

% \begin{figure}[htbp]
%     \centering
%     \includegraphics[width=0.88\textwidth]
%     {asymmetric_water_wave_theorem_1_1.pdf}
%     \caption{A schematic asymmetric periodic stationary water wave.}
%     \label{fig:asymmetric-wave}
% \end{figure}

\end{theorem}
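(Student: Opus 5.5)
We follow the strategy announced in the abstract: localized vorticity combined with a Lyapunov--Schmidt reduction.

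\textbf{Reformulation.} Since $\nabla\cdot\mathbf{u}=0$, we write $\mathbf{u}=\nabla^\perp\psi$ with a stream function $\psi$ that is constant on the free surface and on the bottom. We then look for solutions of the semilinear problem
\[
-\Delta\psi = f_\varepsilon(\psi) \quad \text{in } \Omega_\eta, \qquad \psi=0 \text{ on } y_2=0, \qquad \psi=m \text{ on } y_2=\pi+\eta .
\]
The vorticity law $f_\varepsilon$ is the sum of two pieces: a smooth part producing a small background shear $U_\delta(y_2)$, and a concentrated part of the form $\varepsilon^{-2}(\psi-\kappa_j)_+^p$, supported in small disks of radius $\sim\varepsilon$ around prescribed points $z_j$ inside one period cell. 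By Bernoulli's law the dynamic condition becomes
\[
\tfrac12|\nabla\psi|^2+g(\pi+\eta)-\alpha^2\kappa=\text{const} \quad \text{on the free surface}.
\]

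\textbf{Approximation.} We first solve the problem with a flat surface and point vortices of strengths $\kappa_j$ at $z_j$, using the Green's function $G$ of the periodic strip. The leading-order surface response is then
\[
\eta_0=\mathcal{B}^{-1}\Big(\text{trace of }\textstyle\sum_j\kappa_j\partial_{y_2}G(\cdot,z_j)\text{ times the shear velocity}\Big),
\]
where $\mathcal{B}$ is the linearized Bernoulli boundary operator. Concretely, $\mathcal{B}$ acts on the Fourier mode $k$ as
\[
g-U^2k\coth(k\pi)+\alpha^2k^2 \quad \text{(up to shear corrections)}.
\]
For $\alpha>0$ we choose the period and parameters so that $\mathcal{B}$ is invertible. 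For $\alpha=0$ invertibility can fail at resonant modes, so we add the small shear $\delta$ in order to move off the dispersion relation, and then couple the interior and surface approximations at leading order, as the abstract indicates.

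\textbf{Reduction.} Next we flatten the domain and linearize around the approximate solution. Away from the kernel generated by translations of the vortex cores, the linearized operator is invertible, with a uniform bound in $\varepsilon$; this follows from the nondegeneracy of the ground state of $-\Delta w=w_+^p$ together with the invertibility of $\mathcal{B}$. A contraction argument then solves the projected problem. It remains to solve the finite-dimensional reduced problem, which amounts to finding critical points of a reduced Kirchhoff--Routh energy
\[
\mathcal{K}(z)=\sum_j \kappa_j^2 H(z_j,z_j)+\sum_{i\ne j}\kappa_i\kappa_j G(z_i,z_j)+(\text{shear and surface terms}),
\]
counted modulo horizontal translation. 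We need stable critical points, meaning nondegenerate ones or, failing that, strict local extrema with nonzero degree, so that they persist under the $o(1)$ perturbation.

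\textbf{Asymmetry.} We take two vortices, $N=2$, in one period, with distinct strengths $\kappa_1\neq\kappa_2$ or at distinct heights, and we place them at a horizontal separation that is not $0$ or $L/2$. To find such a configuration we look for a critical point of $\mathcal{K}$ with the horizontal offset $d\notin\{0,L/2\}$. The idea is to use the balance between the shear, which advects the vortices at a velocity depending on height, and the mutual induced velocities, so that the critical point equations fix the heights and leave a one-parameter balance in $d$. Given such a critical point, the free surface satisfies
\[
\eta=\eta_0+o(\eta_0),
\]
and $\eta_0$ contains two Fourier-phase contributions with different amplitudes and phases. A reflection $y_1\mapsto 2y_*-y_1$ would have to map the vortex configuration to itself. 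With unequal strengths this forces $d\in\{0,L/2\}$ modulo $L$, so $\eta$ cannot be symmetric about any $y_*$. To make this rigorous, we compare the first Fourier coefficient against the second: symmetry would force their phases to satisfy
\[
2\arg\hat\eta(1)\equiv\arg\hat\eta(2) \pmod{\pi},
\]
and the leading-order computation violates this relation by an amount of order one relative to the error.

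\textbf{Main obstacle.} The hardest step, in our view, is producing a nondegenerate critical point with a genuinely asymmetric offset, together with controlling $\mathcal{B}^{-1}$ uniformly in the pure-gravity case where $\delta$ is small. Our first approach is to choose the shear profile so that the vortices have distinct equilibrium heights with opposite drift, and then to pick the separation by a degree argument on the circle of offsets, after removing the symmetric zeros.
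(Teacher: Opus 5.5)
\textbf{The two-vortex step fails.} Your framework (patches $\varepsilon^{-2}(\psi-\kappa_j)_+^p$, flattening, Lyapunov--Schmidt reduction to a Kirchhoff--Routh function modulo translations, a shear for $\alpha=0$) is the paper's. The gap is that the step meant to produce asymmetry cannot work with $N=2$.

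After removing the translation, your reduced function depends on $(d,h_1,h_2)$. The Robin terms $\kappa_j^2R_p(x_j)$ and every shear term depend only on the heights. So the only equation in the translation direction is
\[
0=\partial_d\mathcal K=\pm2\kappa_1\kappa_2\,\partial_d\bigl[G_p\bigl((d,h_2),(0,h_1)\bigr)\bigr].
\]
But $\partial_{y_1}G_p(y,x)<0$ whenever $y_1-x_1\in(0,L/2)$, at all heights. Indeed, $\partial_{y_1}G_p(\cdot,x)$ is harmonic in the half-cell $(x_1,x_1+L/2)\times(0,\pi)$. It vanishes on $y_2=0,\pi$ and on both vertical sides, because $G_p$ is even in $y_1-x_1$ and $L$-periodic. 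Near $x$ it behaves like $-\frac{1}{2\pi}(y_1-x_1)|y-x|^{-2}$. The maximum principle then gives the sign; on the midline this is just $F_L>0$ on $(0,L/2)$. Hence $d\in\{0,L/2\}$ modulo $L$, which is exactly the reflection-symmetric case, whatever the strengths, heights or shear profile.

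Your mechanism mixes up the two families of equations. A shear $U(y_2)$ carries only horizontal velocity, so it enters only the $\partial_{h_j}$ equations and can at most fix the heights. The offset $d$ is governed by vanishing of the \emph{vertical} velocity at the vortices, to which the shear does not contribute at leading order. So there is no one-parameter balance in $d$, and a degree argument on the circle of offsets has nothing to find once the symmetric zeros are removed. The leading term is bounded away from zero on compact subsets of $(0,L/2)$. Hence the $o(1)$ remainder of the reduction cannot create an asymmetric zero with the order-one offset your Fourier-phase test needs. This includes the surface-mediated terms, which are higher order for $\alpha>0$ and a relative $O(\delta^2)$ correction for your small shear.

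\textbf{What is missing: at least three vortices, with the strengths as unknowns.} This is what the paper does in Appendix~B. It places three vortices on the midline $y_2=\pi/2$, where all vertical equations hold automatically by the symmetry $y_2\mapsto\pi-y_2$. Their horizontal positions are $-a,0,b$, with periodic gaps $a,b,L-a-b$ pairwise distinct. With $\kappa_2=1$, the two horizontal equations are linear in $\kappa_1,\kappa_3$ and give $\kappa_1=-F_L(b)/F_L(s)$ and $\kappa_3=-F_L(a)/F_L(s)$, where $s=a+b$; both are positive when $s>L/2$. Nondegeneracy modulo translation is then checked on the reduced Hessian. It splits into a negative definite horizontal $2\times2$ block and a vertical $3\times3$ block that is positive definite for $\gamma\ge0$. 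Three distinct points on the circle cannot all be fixed by one reflection, and this is where the asymmetry comes from.

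A smaller point concerns the role of the shear when $\alpha=0$. It is not there to cure a resonance: without it the boundary operator is multiplication by $g$, which is invertible but gives no derivative gain. The shear is introduced to produce the first-order Dirichlet--Neumann term. Because the resonant values satisfy $\gamma_n\to0$, the paper fixes a nonresonant $\gamma\ne0$ independent of $\varepsilon$ and lets constants depend on it. So the uniformity as $\delta\to0$ that you list as the main obstacle is neither needed nor obtainable.
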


\subsection{History and relation to the construction}

The steady waves considered in much of the literature are traveling
waves viewed in a frame moving with the wave. For the Euler equations,
these solutions are stationary in that frame, with the velocity
measured relative to the wave speed.

Periodic water waves have been studied since the works of Stokes,
Nekrasov, Levi-Civita, and Struik
\cite{Stokes1847,Nekrasov1921,LeviCivita1925,Struik1926}.
Stokes developed an expansion for irrotational periodic waves.
Nekrasov, Levi-Civita, and Struik established small-amplitude
existence results in classes of symmetric profiles.
Large-amplitude existence theory was developed by Krasovskii and
Keady--Norbury \cite{Krasovskii1962,KeadyNorbury1978}.
Amick--Fraenkel--Toland and Plotnikov proved Stokes' conjecture
on the $120^\circ$ crest angle of the extreme wave
\cite{AmickFraenkelToland1982,Plotnikov1982}.
V\u{a}rv\u{a}ruc\u{a}--Weiss \cite{VarvarucaWeiss2011}
gave a geometric treatment of the Stokes conjecture for extreme
irrotational gravity waves without assuming symmetry or monotonicity.

Constantin and Strauss \cite{ConstantinStrauss2004} constructed
global connected sets of symmetric periodic gravity waves in
finite depth with general vorticity.
Their construction uses bifurcation from laminar shear flows
and assumes that the horizontal fluid velocity is everywhere
less than the wave speed.
For constant vorticity, Constantin--V\u{a}rv\u{a}ruc\u{a}
obtained regularity and local bifurcation results allowing
interior stagnation points \cite{ConstantinVarvaruca2011}.
Constantin--Strauss--V\u{a}rv\u{a}ruc\u{a} developed a global
bifurcation theory for constant-vorticity gravity waves that
allows critical layers \cite{ConstantinStraussVarvaruca2016}.
Multimodal waves and waves with multiple critical layers were
constructed in
\cite{EhrnstromEscherWahlen2011,EhrnstromWahlen2015,KozlovLokharu2018}.
These constructions were carried out in reflection-symmetric spaces.

More recent work allows free surfaces that are not graphs.
Hur--Wheeler constructed overhanging and touching periodic gravity
waves in infinite-depth flows with constant vorticity
\cite{HurWheeler2022}.
Wahl\'en--Weber developed a finite-depth global bifurcation theory
for gravity waves with general vorticity in a formulation that
permits stagnation points, critical layers, and non-graph profiles
\cite{WahlenWeber2024}.
D\'avila--del Pino--Musso--Wheeler constructed overhanging solitary
gravity waves with constant vorticity \cite{DavilaEtAl2026}.
Gon\c{c}alves constructed continuous curves connecting laminar
flows to touching periodic gravity waves \cite{Goncalves2026}.

There are also several symmetry theorems.
Garabedian proved symmetry for irrotational periodic gravity waves
under the assumption that each streamline, apart from the flat bed,
has exactly one local maximum and one local minimum in a period
\cite{Garabedian1965}.
Constantin--Ehrnstr\"om--Wahl\'en proved symmetry for periodic
gravity waves with general vorticity when the horizontal velocity
relative to the wave speed has a strict sign and the surface is
monotone between its single crest and trough
\cite{ConstantinEhrnstromWahlen2007}.
Matioc--Matioc gave a further symmetry criterion in terms of the
minima of the streamlines \cite{MatiocMatioc2013}.
Tulzer proved symmetry for small-amplitude periodic waves with
monotone profiles, allowing interior stagnation points
\cite{Tulzer2012}.

The existence of a periodic wave without a reflection axis
was posed as an open problem by Okamoto following Zufiria's numerical
work \cite[p.~91]{Okamoto1990}. It was also discussed by Constantin
and Okamoto--Sh\=oji \cite{Constantin2011,OkamotoShoji2001}.
In the rotational problem, the presence of stagnation points,
critical layers, or an overhanging profile does not imply that
the free surface is asymmetric.

Local bifurcation with a single critical wave number does not
directly produce asymmetric waves. If only one horizontal wave
number $k$ is critical, the leading surface perturbation has
the form
\begin{equation}\label{eq:one-mode}
    A\cos(ky_1)+B\sin(ky_1)
    =R\cos\bigl(k(y_1-y_0)\bigr).
\end{equation}
Thus the leading profile is even after translation.
Under the hypotheses of the Crandall--Rabinowitz theorem,
bifurcation in the even subspace gives a symmetric local branch
\cite{CrandallRabinowitz1971}.
Horizontal translations of this branch remain symmetric.
This construction therefore does not give the asymmetric
profiles sought here.

For the capillary--gravity Whitham equation, a simplified model
of capillary--gravity water waves, M{\ae}hlen and Seth
\cite{MaehlenSvenssonSeth2024} proved the existence of arbitrarily
small asymmetric periodic traveling waves.
These waves bifurcate from two resonant Fourier modes at
bifurcation points satisfying a symmetry-breaking condition
for weak surface tension.
The result concerns the Whitham model, not the full Euler equations.

Seth \cite{SvenssonSeth2025} extended this method to a class
of analytic variational equations and obtained criteria for
small-amplitude asymmetric periodic solutions.
For the infinite-depth capillary--gravity Whitham and Babenko
equations, he proved that arbitrarily small asymmetric periodic
solutions do not exist provided that certain nonlinear resonance
coefficients are nonzero for every resonant mode pair.
The Babenko equation is an equivalent formulation of the full
irrotational water-wave problem.
For finite depth, he discussed the possibility of asymmetric waves
but did not establish their existence.

Ehrnstr\"om has also announced ongoing work with Buffoni, Dahne,
and Seth on asymmetric periodic capillary--gravity waves
for the full Euler equations. 
The announced approach combines
analytic arguments with computer-assisted estimates in an
asymptotic expansion. To the best of our knowledge, no paper or
preprint is publicly available at the time of writing.

Iooss--Plotnikov constructed three-dimensional, doubly periodic
traveling gravity waves in infinite depth whose surface patterns
are not symmetric with respect to the direction of propagation
\cite{IoossPlotnikov2011}.
Their result concerns the full Euler equations, but the surface
depends on two horizontal variables and the symmetry condition
differs from the two-dimensional condition considered here.

Kozlov and Lokharu \cite{KozlovLokharu2018,KozlovLokharu2019}
studied small-amplitude finite-depth gravity waves with vorticity
for the full Euler equations.
They first constructed symmetric periodic $N$-modal waves.
They later reduced the steady water-wave problem to a
finite-dimensional Hamiltonian system and proved the existence
of bounded asymmetric steady waves.
The corresponding trajectories were not shown to close after
a finite horizontal distance. Thus, their construction does not
establish spatial periodicity of the asymmetric waves.

A different construction was given by Ehrnstr\"om, Walsh, and
Zeng \cite{EhrnstromWalshZeng2023}.
They obtained stationary capillary--gravity waves in finite depth
with finite kinetic energy and exponentially localized vorticity.
Their waves are solitary and have symmetric free surfaces.
Their construction relates stationary water waves to singularly
perturbed elliptic equations. We also use this connection in the
present paper.

Numerical and experimental studies have also found asymmetric
profiles.
Zufiria found asymmetric periodic waves in a weakly nonlinear
finite-depth gravity-wave model and later computed asymmetric
deep-water gravity waves using the full Euler equations
\cite{ZufiriaJFM180,ZufiriaJFM181}.
Numerical branches of fully nonlinear asymmetric periodic
capillary--gravity waves were obtained in
\cite{GaoWangVandenBroeck2017}.
Laboratory experiments have revealed asymmetric capillary--gravity
profiles and parasitic ripples
\cite{PerlinTing1992,PerlinLinTing1993,Zhang1995}.
These computations and observations do not provide existence
proofs for asymmetric spatially periodic solutions of the
full steady Euler free-boundary problem.

\subsection{Innovation in the construction}

Our proof relates the stationary water-wave problem to singular
perturbation methods for semilinear elliptic equations.
We write the equations in terms of the stream function and use
a conformal map $\operatorname{id}+\Gamma$ to transform the
unknown fluid domain into the fixed periodic strip
\[
    D_L=(0,L)\times(0,\pi).
\]
The pullback of the stream function is denoted by
$u=\Psi\circ(\operatorname{id}+\Gamma)$.
We choose a localized vorticity law of the form
\begin{equation}\label{E:Comvor}
    \omega=\sum_{j=1}^N\omega_j\chi_{B_{\ep}(x_{0,j})},
    \qquad
    \operatorname{supp}\omega_j\subset B_{\ep}(x_{0,j}),
    \quad j=1,\ldots,N,
\end{equation}
where the balls are mutually disjoint. More precisely, the
nonlinearities used below are localized near the prescribed
points $x_{0,j}$.

Ehrnstr\"om--Walsh--Zeng \cite{EhrnstromWalshZeng2023} constructed
stationary capillary--gravity waves with a single exponentially
localized vorticity spike. Their vorticity is not compactly supported
and is determined by a single nonlinear function of the stream
function throughout the fluid domain. In a multiple-spike ansatz,
the tails of different spikes overlap and contribute to the same
nonlinear vorticity law.
We therefore choose a different nonlinearity whose terms are
supported in disjoint neighborhoods of the vortices. This allows us
to prescribe the local vorticity laws, including their signs and
leading strengths, separately for each vortex. The cost of changing
the nonlinearity is that the soliton profiles decay more slowly
in space. The vortices still interact through the velocity field,
and their positions must satisfy the Kirchhoff--Routh balance
equations.

A second difference concerns the role of surface tension.
The waves constructed in
\cite{EhrnstromWalshZeng2023} are capillary--gravity waves.
Surface tension contributes the coercive elliptic boundary operator
\[
    g-\alpha^2\partial_{y_1}^2,
\]
which has good solvability properties. Thus, the leading interior
spike and the leading free-surface correction need not be
strongly coupled. The main difficulty here arises in the pure-gravity
case, $\alpha=0$, where surface tension is absent.
The tangential second-order term disappears from the Bernoulli
condition, and that condition alone gives no derivative control
of the free surface. We introduce a constant-vorticity background
flow to recover an invertible nonlocal mixed boundary operator.
The interior and boundary approximations must then be chosen
together: their leading terms are coupled through the derivative
of the stream function $u$ and the boundary function $\Gamma_2$,
which is the second component of $\Gamma$.
This strong coupling is the main analytical difficulty in the
pure-gravity construction. The remaining equations are solved by
a Lyapunov--Schmidt reduction in which $u$ and $\Gamma$ are
corrected simultaneously.

The vortex centers are determined by a function of Kirchhoff--Routh type.
For solitary waves, the force-balance equations for three ordered
vortices in the infinite strip admit, after normalizing the middle
vortex to be positive, only the alternating sign pattern
\[
    (-,+,-)
\]
up to simultaneous reversal of all signs.
This restriction is imposed by the critical points of the strip
Kirchhoff--Routh function. The periodic Green function has a
different interaction structure and admits additional patterns,
including configurations of type
\[
    (+,+,+).
\]
Thus periodicity enlarges the possible vortex configurations rather
than merely repeating the solitary-wave construction. We choose
a nondegenerate periodic critical configuration with no reflection
axis; its unequal relative positions then give the asymmetry of
the wave. Three vortices are sufficient for our construction of an
asymmetric periodic wave. We expect configurations with more
vortices to provide further examples.

The remainder of the paper is organized as follows.
In Section~2, we transform the free-boundary problem to a fixed
periodic strip and construct the approximate interior and boundary
solutions and the associated Kirchhoff--Routh functions.
Section~3 establishes the required estimates for these approximate
solutions. In Section~4, we study the linearized interior and
free-boundary problems. Section~5 carries out the nonlinear
Lyapunov--Schmidt reduction, and Section~6 completes the proof
of the main theorem by solving the reduced equations for the
vortex centers. Appendix~A establishes solvability and Schauder
estimates for the nonlocal mixed boundary problem, while
Appendix~\ref{sec:appendix-KR} studies the critical points and
nondegeneracy of the limiting and periodic Kirchhoff--Routh functions.

\section{Outline of the construction}

For a planar incompressible Euler flow, we denote by
$\psi$ the stream function and by $\omega$ the vorticity of the flow.
Then
\begin{equation}\label{eq1-6}
\begin{cases}
\nabla^\bot\psi \cdot \nabla \omega=0,\,\,\,\text{in}\,\,\Omega,\\
-\Delta \psi=\omega,\,\,\,\qquad\text{in}\,\,\Omega,
\end{cases}
\end{equation}
where
\[
\Omega=\bigl\{ y=(y_1, y_2):\; y_1\in \mathbb R,\; 0<y_2<\pi+\eta(y_1)\bigr\}.
\]
The kinematic boundary condition implies that $\psi$ is constant on each
boundary component. We work in the zero-flux class, for which these two constants
are equal, and normalize them to zero. On the free upper boundary,
we impose the Bernoulli equation
\begin{equation}\label{10-18-11}
\frac12 |\nabla \psi|^2 + g y_2+ \alpha^2 \kappa =C.
\end{equation}
Here, $C$ is a constant to be determined, $g>0$ is the gravitational
acceleration, $\alpha\ge0$ measures the surface tension, and
\[
\kappa=-\frac{\eta''}{(1+ (\eta')^2)^{\frac32}}
\]
is the signed curvature of the upper boundary of $\Omega$.

We seek solutions $\psi$, $\omega$ and $\eta$
of \eqref{eq1-6}--\eqref{10-18-11} that are periodic in $y_1$
and whose free surface has no axis of reflection.
To prove Theorem \ref{th1-1}, we establish the following result in terms
of the stream function.
\begin{theorem}\label{th2-1}
For $\alpha\ge0$, there exist $L>0$ and $L$-periodic functions
$\psi$, $\omega$ and $\eta$ satisfying
\eqref{eq1-6}--\eqref{10-18-11} for some constant $C$, with
$\psi=0$ on both boundary components, such that
\[
\eta(y_*+s)\not\equiv\eta(y_*-s)
\qquad\text{for every }y_*\in\mathbb R.
\]
Moreover, $\eta\in C^{2,\theta}(\mathbb R)$ for some $\theta\in(0,1)$.
\end{theorem}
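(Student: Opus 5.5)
We prove Theorem~\ref{th2-1} by a gluing construction: the flow is $N=3$ concentrated vortices with separately prescribed, compactly localized vorticity laws, placed on a small constant-vorticity shear background in the periodic cell $D_L=(0,L)\times(0,\pi)$. The asymmetry of the surface is inherited from an asymmetric critical configuration of the periodic Kirchhoff--Routh function.

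\emph{Step 1: flatten the domain.} We write $y=x+\Gamma(x)$, where $\operatorname{id}+\Gamma$ is the conformal map from $D_L$ onto one period of $\Omega$, and set $u=\Psi\circ(\operatorname{id}+\Gamma)$. Then $-\Delta u=|\det D(\operatorname{id}+\Gamma)|\,\omega\circ(\operatorname{id}+\Gamma)$ in $D_L$, with $u=0$ on $x_2=0,\pi$. Since $\Gamma_1$ is the periodic harmonic conjugate of $\Gamma_2$, the map is determined by the boundary trace $\Gamma_2(x_1,\pi)$. The Bernoulli condition \eqref{10-18-11} becomes a nonlocal boundary equation coupling $\partial_{x_2}u$ and $\Gamma_2$. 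We take the vorticity as in \eqref{E:Comvor}: $\omega=\sum_j \ep^{-2}f_j(\cdot)$, with each $f_j$ a function of $\psi$ supported in $B_\ep(x_{0,j})$. Then $\nabla^\perp\psi\cdot\nabla\omega=0$ holds automatically, and the first equation of \eqref{eq1-6} reduces to a semilinear elliptic problem.

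\emph{Step 2: approximate solution and reduced function.} In each ball we place a rescaled radial ground state $\kappa_j$-weighted at $x_{0,j}$ and correct it by the regular part of the periodic Green function of $D_L$. The projected interaction energy is, to leading order, the periodic Kirchhoff--Routh function
\[
\mathcal W(x_1,x_2,x_3)=\sum_{i\ne j}\kappa_i\kappa_j G_L(x_i,x_j)+\sum_j\kappa_j^2 H_L(x_j,x_j)+\text{shear term}.
\]
For $\alpha>0$ the boundary operator is $g-\alpha^2\partial_{x_1}^2$ plus a Dirichlet-to-Neumann contribution. It is coercive, so we solve for $\Gamma_2$ after $u$ at lower order. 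For $\alpha=0$ we add a constant-vorticity background flow, so that the linearized Bernoulli operator $g\,\Gamma_2-c\,\mathcal G(\Gamma_2)$ (with $\mathcal G$ Dirichlet-to-Neumann) is invertible, where $L$ and the shear strength are chosen to avoid resonance. The leading interior and boundary profiles are then solved simultaneously as a linear coupled system.

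\emph{Step 3: Lyapunov--Schmidt reduction.} We linearize around the approximation. The interior linearized operator has approximate kernel spanned by the translates $\partial_{x_i}$ of each spike; this uses the nondegeneracy of the ground state. The boundary part is invertible by the Schauder theory for the nonlocal mixed problem. Working in weighted Hölder spaces, we solve for $(u,\Gamma_2)$ modulo the $2N$-dimensional kernel by a contraction, and obtain errors that are small in $\ep$. This yields $C^{2,\theta}$ regularity of $\eta$. The reduced equations are $\nabla\mathcal W(x)+o(1)=0$.

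\emph{Step 4: asymmetric critical point.} We exhibit a nondegenerate critical point of $\mathcal W$ with three vortices whose horizontal spacings $d_{12},d_{23},d_{31}$ are pairwise unequal, and whose heights also rule out any reflection. Under translation invariance, nondegeneracy is understood modulo the one-dimensional translation orbit, which we remove by fixing $x_{1,1}$. The Brouwer degree then persists under the $o(1)$ perturbation. Finally, the leading surface profile is $\ep$-perturbed from an explicit asymmetric function of the vortex positions, namely the boundary response to the Green potentials, so $\eta$ is not even about any $y_*$.

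We expect the main obstacle to be twofold: the strongly coupled interior/boundary approximation in the pure-gravity case, and the construction and verification of a nondegenerate asymmetric periodic critical configuration, which may require a careful asymptotic or explicit analysis in a limit such as large $L$ or near-degenerate spacings.
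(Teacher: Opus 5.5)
Your Steps~1--3 follow the paper's architecture:
\begin{itemize}
\item conformal flattening onto $D_L$;
\item cut-off vorticity laws $\ep^{-2}1_{B_\delta(x_{0,j})}f_j$;
\item for $\alpha=0$, a constant-vorticity shear with a nonresonance condition, and a leading boundary profile driven by $\frac{\pi\gamma}{2}\partial_{y_2}\mathcal U_{\ep,\mathbf x,\mathbf a}$;
\item a Lyapunov--Schmidt reduction to a Kirchhoff--Routh type function.
\end{itemize}
The genuine gap is Step~4, which is where the asymmetry actually comes from. You assert that $\mathcal W$ has a nondegenerate critical point with pairwise unequal spacings, and then list its construction as an open obstacle. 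Asymptotics at prescribed strengths (large $L$, near-degenerate spacings) do not supply it. For three vortices on the midline $y_2=\pi/2$ with $\kappa_2=1$, gaps $a,b$ and $s=a+b$, the horizontal equations are
\[
F_L(a)+\kappa_3F_L(s)=0,\qquad F_L(b)+\kappa_1F_L(s)=0,\qquad F_L(r):=\sum_{n\in\mathbb Z}\frac{1}{\sinh(r+nL)}.
\]
Since $F_L'=-A_L<0$ on $(0,L)$, equal outer strengths $\kappa_1=\kappa_3$ force $a=b$, which is a symmetric configuration.

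The missing idea is to solve these equations for the strengths rather than the positions. Because each vortex has its own vorticity law, $\kappa_1$ and $\kappa_3$ are free, and the equations are linear in them.
\begin{itemize}
\item Put all three vortices on the midline rather than using heights to break symmetry. There the vertical gradient and the mixed horizontal--vertical Hessian entries vanish, by the symmetry $y_2\mapsto\pi-y_2$ of $G_p$, $R_p$ and $\psi_{01}$.
\item Prescribe pairwise distinct gaps $a,b,L-s$ with $a,b<L/2<s<L$.
\item Set $\kappa_1=-F_L(b)/F_L(s)>0$ and $\kappa_3=-F_L(a)/F_L(s)>0$.
\end{itemize}
Nondegeneracy modulo translation is then a sign argument. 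The horizontal block has quadratic form $-\frac1\pi\{\kappa_1A_L(a)\xi^2+\kappa_3A_L(b)\zeta^2+\kappa_1\kappa_3A_L(s)(\xi+\zeta)^2\}<0$. The vertical block is positive definite for three reasons: $\rho_L=\partial_{x_2}^2R_p>0$ on the midline; $A_L>B_L>0$, where $B_L(r)=\sum_n\sinh^{-2}(r+nL)$; and the shear only adds $2\gamma\kappa_i\ge0$ on the diagonal.

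You need this to hold for every $\gamma\ge0$, not just bounded $\gamma$. For $\alpha=0$ the shear strength is kept fixed, because nonresonance and the boundary inverse bounds depend on it. The circulations, however, are only $O(|\ln\ep|^{-1})$. After normalization, the shear term in your $\mathcal W$ therefore carries an extra factor of order $|\ln\ep|$, so $\nabla\mathcal W+o(1)=0$ is not a perturbation of one fixed gradient. On the midline, $\psi_{01}$ exerts no force and only enlarges the positive vertical block, which is why the argument survives.

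Two minor corrections:
\begin{itemize}
\item The transport equation is not automatic with a cut-off. You need $f_j(u)=0$ near $\partial B_\delta(x_{0,j})$ with $\delta$ fixed, and the core radius $s_\ep\sim\ep|\ln\ep|^{(p-1)/2}$ exceeds $\ep$.
\item The remainders are smaller than the leading terms only by powers of $|\ln\ep|^{-1}$, not by $O(\ep)$. The asymmetry of $\eta$ must be read off from a leading boundary profile that dominates them uniformly in $y_*$.
\end{itemize}
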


In the following, we treat the pure-gravity and capillary--gravity cases separately. In the pure-gravity case $\alpha=0$, we introduce a background
flow and impose a nonresonance condition on the boundary
operator. In the capillary--gravity case $\alpha>0$, the surface-tension
term provides a second-order elliptic operator, and no background flow
is needed in this construction.

\subsection{Transformation of the domain}

Let $D=\mathbb R\times(0,\pi)$ and let
$\Gamma=\Gamma_1+i\Gamma_2$ be holomorphic in $D$, with
$|\Gamma|+|\Gamma'|\ll1$ and $\Gamma_2(y_1,0)=0$.
We represent $\Omega$ by
\[
\Omega=(\mathrm{id}+\Gamma)D
=\{(y_1+\Gamma_1(y),y_2+\Gamma_2(y)):
 y_1\in\mathbb R,\ 0<y_2<\pi\}.
\]
The physical free surface and its conformal parametrization satisfy
\[
\eta\bigl(t+\Gamma_1(t,\pi)\bigr)=\Gamma_2(t,\pi).
\]
Let $u(z)=\psi(z+\Gamma(z))$. Then $-\Delta\psi=\omega$ becomes
\begin{equation}\label{eq1-2}
\begin{cases}
-\Delta u=|1+\Gamma'|^2
\omega(y_1+\Gamma_1(y),y_2+\Gamma_2(y)),&y\in D,\\
u(y_1,0)=u(y_1,\pi)=0,
\end{cases}
\end{equation}
and \eqref{10-18-11} becomes
\begin{equation}\label{eq1-3}
\frac12\frac{1}{|1+\Gamma'|^2}
\bigl(\frac{\partial u}{\partial y_2}\bigr)^2
-\alpha^2\frac{\operatorname{Im}(\Gamma''(1+\overline{\Gamma'}))}
{|1+\Gamma'|^3}
+g(\pi+\Gamma_2)=C,\quad\text{on }\{y_2=\pi\}.
\end{equation}

We construct $u$ and $\Gamma$ satisfying \eqref{eq1-2}--\eqref{eq1-3}
with period $L$ in $y_1$. The vorticity laws specified below depend on
the stream function near each concentration point. Since $\Gamma_1$ is a harmonic
conjugate of $\Gamma_2$, it is determined up to a constant.
If $\Gamma_2$ is $L$-periodic, the derivatives of $\Gamma_1$ are also
$L$-periodic. To ensure that $\Gamma_1$ itself is $L$-periodic, we impose
\begin{equation}\label{neq1-3}
\int_0^L\Gamma_2(t,\pi)\,dt=0.
\end{equation}
Indeed, $m(y_2)=\int_0^L\Gamma_2(t,y_2)\,dt$ satisfies
$m''=0$ and $m(0)=m(\pi)=0$, so $m'=0$. The Cauchy--Riemann equations
then give
$\Gamma_1(L,y_2)-\Gamma_1(0,y_2)=m'(y_2)=0$.
Condition \eqref{neq1-3} follows from \eqref{eq1-3} if we choose
\begin{equation}\label{1-23-8}
\begin{split}
C={}&\frac1L\int_0^L\left[
\frac12\frac{1}{|1+\Gamma'|^2}
\bigl(\frac{\partial u}{\partial y_2}\bigr)^2
-\alpha^2\frac{\operatorname{Im}(\Gamma''(1+\overline{\Gamma'}))}
{|1+\Gamma'|^3}\right]\,dy_1\\
&-\frac aL\int_0^L\Gamma_2(t,\pi)\,dt+g\pi,
\qquad y_2=\pi,
\end{split}
\end{equation}
where $a\ne-g$. Averaging \eqref{eq1-3} and comparing with
\eqref{1-23-8} gives $(g+a)\int_0^L\Gamma_2(t,\pi)\,dt=0$.

\bigskip
\subsection{Approximate solutions on the free boundary}

We first consider the free boundary equation \eqref{eq1-3}.

{\bf The case $\alpha>0$}.
We seek solutions with $|\partial_{y_2}u|\ll1$ on $y_2=\pi$
and $|\Gamma|+|\Gamma'|\ll1$. Expanding \eqref{eq1-3}, we obtain
\begin{equation}\label{60-24-8}
\frac12\bigl(\frac{\partial u}{\partial y_2}\bigr)^2
-\alpha^2\frac{\partial^2\Gamma_2}{\partial y_1^2}
-\mathcal F_1(u,\Gamma)+g(\pi+\Gamma_2)=C,
\quad\text{on }\{y_2=\pi\},
\end{equation}
where
\[
|\mathcal F_1(u,\Gamma)|
=O\left(\bigl(\frac{\partial u}{\partial y_2}\bigr)^2|\Gamma'|
+\alpha^2|\Gamma''||\Gamma'|\right).
\]
Taking $a=0$ in \eqref{1-23-8}, we use periodicity to obtain
\[
C=g\pi+\frac1{2L}\int_0^L
\bigl(\frac{\partial u}{\partial y_2}\bigr)^2\,dy_1
-\frac1L\int_0^L\mathcal F_1(u,\Gamma)\,dy_1,
\qquad y_2=\pi.
\]
Thus \eqref{eq1-3} becomes
\begin{equation}\label{61-21-8}
\begin{split}
-\alpha^2\frac{\partial^2\Gamma_2}{\partial y_1^2}+g\Gamma_2
={}&-\frac12\bigl(\frac{\partial u}{\partial y_2}\bigr)^2
+\frac1{2L}\int_0^L\bigl(\frac{\partial u}{\partial y_2}\bigr)^2\,dy_1\\
&+\mathcal F_1(u,\Gamma)
-\frac1L\int_0^L\mathcal F_1(u,\Gamma)\,dy_1,
\quad\text{on }\{y_2=\pi\}.
\end{split}
\end{equation}
This is a perturbation of
\begin{equation}\label{10-21-8}
-\alpha^2\frac{\partial^2\Gamma_2}{\partial y_1^2}+g\Gamma_2
=-\frac12\bigl(\frac{\partial u}{\partial y_2}\bigr)^2
+\frac1{2L}\int_0^L\bigl(\frac{\partial u}{\partial y_2}\bigr)^2\,dy_1,
\quad\text{on }\{y_2=\pi\}.
\end{equation}

\bigskip

{\bf The case $\alpha=0$}.
The Bernoulli equation is
\begin{equation}\label{1-15-6}
\frac12\frac{1}{|1+\Gamma'|^2}
\bigl(\frac{\partial u}{\partial y_2}\bigr)^2
+g(\pi+\Gamma_2)=C,\quad\text{on }\{y_2=\pi\}.
\end{equation}
It is a perturbation of
\begin{equation}\label{n1-15-6}
\frac12\bigl(\frac{\partial u}{\partial y_2}\bigr)^2
+g(\pi+\Gamma_2)=C,\quad\text{on }\{y_2=\pi\}.
\end{equation}
To solve \eqref{1-15-6}, we need to estimate the derivatives of $\Gamma_2$.
Unlike \eqref{10-21-8}, \eqref{n1-15-6} gives no gain of derivatives
for the boundary trace of $\Gamma_2$ from the prescribed right-hand side.
We introduce a background flow $\hat\psi_0$ with constant vorticity
$\gamma$, satisfying $-\Delta\hat\psi_0=\gamma$ in $\Omega$ and zero
Dirichlet conditions on both boundary components. Its pullback
$\psi_0$ satisfies
\begin{equation}\label{2-15-6}
-\Delta\psi_0=|1+\Gamma'|^2\gamma,
\quad\text{in }D_L:=(0,L)\times(0,\pi),
\end{equation}
with $\psi_0=0$ on $y_2=0,\pi$ and $L$-periodicity in $y_1$.

In this case, we write the pullback of the total stream function as
$u+\psi_0$, where $u$ denotes the vortex contribution. The Bernoulli equation is
\begin{equation}\label{10-15-6}
\frac12\frac{1}{|1+\Gamma'|^2}
\bigl(\frac{\partial(u+\psi_0)}{\partial y_2}\bigr)^2
+g(\pi+\Gamma_2)=C,\quad\text{on }\{y_2=\pi\}.
\end{equation}
Expanding the denominator gives
\begin{equation}\label{11-15-6}
\begin{split}
\frac12\left[
\bigl(\frac{\partial u}{\partial y_2}\bigr)^2
+2\frac{\partial u}{\partial y_2}\frac{\partial\psi_0}{\partial y_2}
+\bigl(\frac{\partial\psi_0}{\partial y_2}\bigr)^2\right]
\left[1-2\frac{\partial\Gamma_2}{\partial y_2}
+O(|\nabla\Gamma_2|^2)\right]
+g(\pi+\Gamma_2)=C,
\quad y_2=\pi.
\end{split}
\end{equation}

The leading term of the background flow is
\begin{equation}\label{30-21-8}
\psi_{01}=\gamma\left(\frac{\pi^2}{8}
-\frac12\bigl(y_2-\frac\pi2\bigr)^2\right),
\end{equation}
and the second term solves
\begin{equation}\label{3-15-6}
-\Delta\psi_{02}=2\gamma\frac{\partial\Gamma_2}{\partial y_2},
\quad\text{in }D_L,\qquad
\psi_{02}(y_1,0)=\psi_{02}(y_1,\pi)=0,
\end{equation}
with period $L$ in $y_1$.
Since $\Delta\Gamma_2=0$ and $\Gamma_2(y_1,0)=0$, direct differentiation
gives
\[
-\Delta\bigl[\gamma(\pi-y_2)\Gamma_2\bigr]
=2\gamma\partial_{y_2}\Gamma_2.
\]
This function is periodic and vanishes on both boundary components. Uniqueness of the periodic Dirichlet problem therefore gives
$\psi_{02}=\gamma(\pi-y_2)\Gamma_2$, and hence
\begin{equation}\label{4-15-6}
\left.\frac{\partial\psi_{02}}{\partial y_2}\right|_{y_2=\pi}
=-\gamma\Gamma_2(y_1,\pi).
\end{equation}
For $r=\psi_0-\psi_{01}-\psi_{02}$, we have
\[
-\Delta r=\gamma|\Gamma'|^2,\qquad
r|_{y_2=0,\pi}=0,
\]
with periodic boundary conditions in $y_1$. For fixed $L$,
\[
\|\nabla r\|_{L^\infty(D_L)}
\le C|\gamma|\|\Gamma'\|_{L^\infty(D_L)}^2.
\]
Consequently,
\[
\left.\frac{\partial\psi_0}{\partial y_2}\right|_{y_2=\pi}
=-\frac{\pi\gamma}{2}-\gamma\Gamma_2(y_1,\pi)
+\partial_{y_2}r(y_1,\pi).
\]
Substituting into \eqref{11-15-6}, we obtain
\begin{equation}\label{12-15-6}
\begin{split}
-\frac{\pi^2\gamma^2}{4}\frac{\partial\Gamma_2}{\partial y_2}
+\left(g+\frac{\pi\gamma^2}{2}\right)\Gamma_2
-\frac{\pi\gamma}{2}\frac{\partial u}{\partial y_2}
+g\pi+\frac{\pi^2\gamma^2}{8}
=C+\mathcal F(u,\Gamma),\quad y_2=\pi.
\end{split}
\end{equation}
For bounded $\gamma$ and sufficiently small $\|\Gamma'\|_{L^\infty}$,
the remainder satisfies
\begin{equation}\label{0-24-8}
\begin{split}
\|\mathcal F(u,\Gamma)\|_{L^\infty(0,L)}
\le C\left[
\|\partial_{y_2}u(\cdot,\pi)\|_{L^\infty(0,L)}^2
+\|\Gamma_2(\cdot,\pi)\|_{L^\infty(0,L)}^2
+\|\nabla\Gamma_2\|_{L^\infty(D_L)}^2\right].
\end{split}
\end{equation}

In \eqref{1-23-8}, we replace $u$ by $u+\psi_0$ and take
$a=\pi\gamma^2/2$. Then
\begin{equation}\label{30-23-8}
\begin{split}
C={}&\frac1{2L}\int_0^L\frac{1}{|1+\Gamma'|^2}
\bigl(\frac{\partial(u+\psi_0)}{\partial y_2}\bigr)^2\,dy_1
-\frac aL\int_0^L\Gamma_2\,dy_1+g\pi\\
={}&g\pi+\frac{\pi^2\gamma^2}{8}
-\frac{\pi^2\gamma^2}{4L}\int_0^L
\frac{\partial\Gamma_2}{\partial y_2}\,dy_1
+\left(\frac{\pi\gamma^2}{2}-a\right)
\frac1L\int_0^L\Gamma_2\,dy_1\\
&-\frac{\pi\gamma}{2L}\int_0^L
\frac{\partial u}{\partial y_2}\,dy_1
-\frac1L\int_0^L\mathcal F(u,\Gamma)\,dy_1\\
={}&g\pi+\frac{\pi^2\gamma^2}{8}
-\frac{\pi^2\gamma^2}{4L}\int_0^L
\frac{\partial\Gamma_2}{\partial y_2}\,dy_1
-\frac{\pi\gamma}{2L}\int_0^L
\frac{\partial u}{\partial y_2}\,dy_1
-\frac1L\int_0^L\mathcal F(u,\Gamma)\,dy_1,
\qquad y_2=\pi.
\end{split}
\end{equation}
Thus \eqref{12-15-6} becomes
\begin{equation}\label{62-24-8}
\begin{split}
&-\frac{\pi^2\gamma^2}{4}\frac{\partial\Gamma_2}{\partial y_2}
+\frac{\pi^2\gamma^2}{4L}\int_0^L
\frac{\partial\Gamma_2}{\partial y_2}\,dy_1
+\left(g+\frac{\pi\gamma^2}{2}\right)\Gamma_2\\
&\qquad=\frac{\pi\gamma}{2}\frac{\partial u}{\partial y_2}
-\frac{\pi\gamma}{2L}\int_0^L\frac{\partial u}{\partial y_2}\,dy_1
+\mathcal F(u,\Gamma)
-\frac1L\int_0^L\mathcal F(u,\Gamma)\,dy_1,
\quad y_2=\pi.
\end{split}
\end{equation}
It is a perturbation of
\begin{equation}\label{22-21-8}
-\frac{\pi^2\gamma^2}{4}\frac{\partial\Gamma_2}{\partial y_2}
+\frac{\pi^2\gamma^2}{4L}\int_0^L
\frac{\partial\Gamma_2}{\partial y_2}\,dy_1
+\left(g+\frac{\pi\gamma^2}{2}\right)\Gamma_2
=f,\quad\text{on }\{y_2=\pi\},
\end{equation}
where
\[
f=\frac{\pi\gamma}{2}\frac{\partial u}{\partial y_2}
-\frac{\pi\gamma}{2L}\int_0^L\frac{\partial u}{\partial y_2}\,dy_1.
\]
The boundary trace has zero mean. On the Fourier modes with
$k_n=2\pi n/L$, $n\ge1$, the operator in \eqref{22-21-8}, with
harmonic extension and zero bottom data, has eigenvalues
\[
\lambda_n=g+\frac{\pi\gamma^2}{2}
-\frac{\pi^2\gamma^2}{4}k_n\coth(\pi k_n).
\]
We choose $\gamma\ne0$ such that $\lambda_n\ne0$ for every $n\ge1$.
Smallness of $\gamma$ alone does not imply this condition: for large $n$,
the positive resonance values satisfy
\[
\gamma_n^2=
\frac{g}{\frac{\pi^2}{4}k_n\coth(\pi k_n)-\frac\pi2}
\longrightarrow0.
\]
In the inverse estimates for this boundary operator, we fix a nonresonant
$\gamma$. The constants may depend on $\gamma$ and $L$.

\bigskip
\subsection{Approximate solutions in the interior}
We next consider \eqref{eq1-2}. Fix $L>0$ and set

\[
D_L=\bigl\{ y=(y_1, y_2):\; 0<y_1<L,\;  0<y_2<\pi\bigr\}.
\]

{\bf The case   $\alpha>0$}. We look for a solution $u$ of
\eqref{eq1-2} such that the pullback of the corresponding vorticity
$\omega$ is supported near given points $x_{0, j}\in D_L$, $j=1,\cdots, k$.
Choose $\delta>0$ so that the closed balls $\overline{B_\delta(x_{0,j})}$
are pairwise disjoint and contained in $D_L$.
For this purpose, we take

\begin{equation}\label{1-24-8}
\omega( (y_1+\Gamma_1(y), y_2+\Gamma_2(y))) =\frac1{\ep^2}\sum\limits_{j=1}^k 1_{B_\delta(x_{0,j})} f_j(u),
\end{equation}
where $\ep>0$ is a small parameter, 
and $1_S=1$ in $S$, while $1_S=0$ in $\mathbb R^2\setminus S$.
The coefficients defined on $D_L$ are extended periodically in $y_1$.
For simplicity, we take

\[
f_j(u)= (u-\kappa_j)_+^p,
\]
or

\[
f_j(u)= -(-u-\kappa_j)_+^p,
\]
where $p>1$ and $\kappa_j>0$ is a given constant.

We look for a solution $(u_\ep, \Gamma_\ep)$ of
\eqref{eq1-2}--\eqref{eq1-3} and \eqref{1-24-8} such that

\begin{itemize}

\item $(u_\ep, \Gamma_\ep)$ is periodic in $y_1$ with period $L$;

\item $f_j(u_\ep)=0$ in a neighborhood of
$\partial B_\delta(x_{0,j})$ for each $j$, and in
$D_L\setminus\bigcup_{j=1}^k B_\delta(x_{0,j})$,
and $\bigl(\frac{\partial u_\ep}{\partial y_2}\bigr)^2$ is small
on $\{ y_2=\pi\}$;

\item $
|\Gamma'|<<1.
$

\end{itemize}
The vanishing of the nonlinearities near the cutoff boundaries ensures
that the vorticity transport equation holds across those boundaries.
Thus, we solve the problem

\begin{equation}\label{10-24-8}
\begin{cases}
-\Delta u= |1+\Gamma'|^2\frac1{\ep^2}\sum\limits_{j=1}^k 1_{B_\delta(x_{0,j})} f_j(u) ,\;
\; y\in D_L,\\
u(y_1, 0)=u(y_1, \pi)=0,\;\; \text{$u$ is periodic in $y_1$ in $D_L$},
\end{cases}
\end{equation}
together with \eqref{61-21-8}. The function $\Gamma_2$ satisfies

\begin{equation}\label{20-24-8}
\begin{cases}
\Delta \Gamma_2=0,\;\; \text{in}\; D_L,\\
\Gamma_2(y_1,0)=0, \Gamma_2(y_1,\pi)=\eta(y_1+\Gamma_1(y_1,\pi)), \;\; \text{ $\Gamma_2$  is periodic in $y_1$ in $D_L$},
\end{cases}
\end{equation}
and $\Gamma_1$ is a harmonic conjugate of $\Gamma_2$ in $D_L$.
We choose

\[
\begin{split}
\Gamma_1(y_1,y_2)=& \int_{(L/2,\pi/2)}^{ (y_1,y_2) }\frac{\partial \Gamma_2}{\partial y_2}\,dy_1-\frac{\partial \Gamma_2}{\partial y_1}\,dy_2.
\end{split}
\]

\bigskip
{\bf The case   $\alpha=0$}. We look for a stream function of the form
$u+\psi_0$, where $\psi_0$ satisfies \eqref{2-15-6} and $u$ satisfies

\begin{equation}\label{n10-24-8}
\begin{cases}
-\Delta u= |1+\Gamma'|^2\frac1{\ep^2}\sum\limits_{j=1}^k 1_{B_\delta(x_{0,j})} f_j(u+\psi_0) ,\;
\; y\in D_L,\\
u(y_1, 0)=u(y_1, \pi)=0,\;\; \text{$u$ is periodic in $y_1$ in $D_L$},
\end{cases}
\end{equation}

The function $\Gamma_2$ satisfies

\begin{equation}\label{21-24-8}
\begin{cases}
\Delta \Gamma_2=0,\;\; \text{in}\; D_L,\\
\Gamma_2(y_1,0)=0,  \;\; \text{  $\Gamma_2(y_1,\pi)$ satisfies \eqref{62-24-8}, }\\
\text{$\Gamma_2$  is periodic in $y_1$ in $D_L$},
\end{cases}
\end{equation}

\medskip

In the case $\alpha>0$, omitting the perturbation term
$\mathcal F_1(u, \Gamma)$ in \eqref{61-21-8} gives a second-order ODE.
Setting $\Gamma$ equal to zero in \eqref{10-24-8} gives a nonlinear
elliptic problem. Thus, for $\alpha>0$, the problem consists of an
elliptic equation weakly coupled to a second-order ODE.

In the case $\alpha=0$, the free boundary equation \eqref{22-21-8}
involves the Dirichlet--Neumann operator $\frac{\partial \Gamma_2}
{\partial y_2}$. We study this equation through the boundary value
problem \eqref{21-24-8}, which is a perturbation of

\begin{equation}\label{23-24-8}
\begin{cases}
\Delta \Gamma_2=0,\;\; \text{in}\; D_L,\\
\Gamma_2(y_1,0)=0,  \\
-\frac{\pi^2\gamma^2}4 \frac{\partial \Gamma_2}{\partial y_2}
+\frac{\pi^2\gamma^2}{4L} \int_{0}^L\frac{\partial \Gamma_2}{\partial y_2}+\bigl(g+\frac\pi2 \gamma^2
  \bigr)\Gamma_2=f ,\quad \text{on}\; \{y_2=\pi\},\\
\text{ $\Gamma_2$  is periodic in $y_1$ in $D_L$},
\end{cases}
\end{equation}
where $f=\frac{\pi \gamma}2 \frac{\partial u}{\partial y_2}
  -\frac{\pi \gamma}2 \frac1L \int_{0}^L\frac{\partial u}{\partial y_2}$. See \eqref{22-21-8}.
  
Thus, the problem couples an elliptic equation to the mixed boundary
value problem \eqref{23-24-8}, which contains a nonlocal term. We therefore
study the solvability of \eqref{23-24-8} and estimate $\Gamma_2$ in terms
of $f$ under the nonresonance condition stated above.

\bigskip

We now construct approximate solutions for the corresponding elliptic
problem.

\medskip

{\bf The case   $\alpha>0$}.
Since we seek $\Gamma$ with $|\Gamma'|<<1$, \eqref{10-24-8} is a
perturbation of

\begin{equation}\label{24-24-8}
\begin{cases}
-\Delta u=\frac1{\ep^2}\sum\limits_{j=1}^k 1_{B_\delta(x_{0,j})} f_j(u) ,\;
\; y\in D_L,\\
u(y_1, 0)=u(y_1, \pi)=0,\;\; \text{$u$ is periodic in $y_1$ in $D_L$},
\end{cases}
\end{equation}

We construct approximate solutions for \eqref{24-24-8} as follows.

Recall that $f_j(u)=(u-\kappa_j)_+^p$, or $f_j(u)=-(-u-\kappa_j)_+^p$.
 We consider
\begin{equation}\label{eq3-1}
-\varepsilon^2\Delta u= (u-a)_+^p,
\,\,\, \text{in}\; \mathbb{R}^2.
\end{equation}
The problem

\[-\Delta\phi=\phi^p,\,\,\,\phi>0,\,\,\, \phi\in H^1_0(B_1(0))\]
has a unique solution $\phi$, which is radial and satisfies

\begin{equation}\label{0-11-10}
\int_{B_1(0)}\, \phi^{p+1}=\frac{\pi (p+1)}{2}|\phi^\prime(1)|^2,\,\,\, \int_{B_1(0)}\, \phi^{p}=2\pi|\phi^\prime(1)|.
\end{equation}
Then \eqref{eq3-1} has a solution $U_{\varepsilon,a}$ given by
\begin{equation}\label{eq3-2}
U_{\varepsilon,a}(y)=\begin{cases}
a+(\frac{\varepsilon}{s_\varepsilon})^\frac{2}{p-1}\phi(\frac{|y|}{s_\varepsilon}),
\,\,\,  &|y|\leq s_\varepsilon,\\
\frac{a}{\ln s_\varepsilon}\ln|y|,\,\,\, &|y|>s_\varepsilon,
\end{cases}
\end{equation}
where $s_\varepsilon$ is chosen so that $U_{\varepsilon,a}\in C^1(\mathbb{R}^2)$.
Thus, $s_\varepsilon$ is determined by
\begin{equation}\label{eq3-3}
(\frac{\varepsilon}{s_\varepsilon})^\frac{2}{p-1}\phi^\prime(1)=\frac{a}{\ln s_\varepsilon},
\end{equation}
which gives the following expansion for $s_\varepsilon$:
\begin{equation}\label{eq3-4}
s_\varepsilon=(\frac{a}{|\phi^\prime(1)|})^{\frac{1-p}{2}}\varepsilon|\ln\varepsilon|^\frac{p-1}{2}(1+O(\frac{\ln|\ln\varepsilon|}{|\ln\varepsilon|})).
\end{equation}

The corresponding single-vortex problem in the nonperiodic strip is
\begin{equation}\label{1-8-10}
\begin{cases}
-\ep^2\Delta u=  f_j(u),&\text{in}\,\,D,\\
u=0,&\text{on}\,\,\partial D.
\end{cases}
\end{equation}

For each $x\in D_L$ with $B_{s_\varepsilon}(x)\Subset D_L$, define
$U_{\varepsilon,x,a}(y)=U_{\varepsilon,a}(y-x) $. We define the projection
$PU_{\varepsilon,x,a}$ of $U_{\varepsilon,x,a}$ as the solution of

\begin{equation}\label{2-8-10}
\begin{cases}
-\ep^2 \Delta v=(U_{\varepsilon,x,a}-a)_+^p,\;\; \text{in}\;D_L,\\
v=0,\;\; \text{on $y_2=0$ or $y_2=\pi$},\\
v \; \text{is periodic in $y_1$}.
\end{cases}
\end{equation}

Denote by $G_p(y,x)$ the Green's function for $-\Delta $ in $D_L$,
with zero Dirichlet boundary conditions on $y_2=0$ and $y_2=\pi$
and periodic boundary conditions in $y_1$. Then

\begin{equation}\label{1-8-5}
G_p(y,x)=\sum_{j=-\infty}^{+\infty}G(y+jL e_1, x)
\end{equation}
where $e_1=(1,0)$ and $G(y,x)$ is the Green's function for $-\Delta $
in $D=\mathbb R\times (0, \pi)$ with zero Dirichlet boundary conditions.
It follows that

\begin{equation}\label{2-8-5}
PU_{\varepsilon,x,a}(y)= \frac1{\ep^2}
\int_{D_L} G_p(z,y)(U_{\varepsilon,x,a}(z)-a)_+^p\,dz,
\end{equation}
and

\begin{equation}\label{3-8-5}
PU_{\varepsilon,x,a}(y)= G_p(x,y) \frac1{\ep^2}
\int_{D_L} (U_{\varepsilon,x,a}(z)-a)_+^p\,dz, \quad y\in D_L\setminus B_\delta(x).
\end{equation}

The last equality follows from the mean-value property in the source
variable, since the source is radial and supported in $B_{s_\varepsilon}(x)$,
with $s_\varepsilon<\delta$.

Let $\mathbf{x}=(x_1,\cdots,x_k)$, where $x_j\in D_L$, $j=1,\cdots,k$, with $x_i\ne x_j$,
$i\ne j$.
The approximate solution for \eqref{24-24-8} is defined by
\begin{equation}\label{eq3-7} \mathcal{U}_{\varepsilon,\mathbf{x},\mathbf{a}}=\sum\limits_{j=1}^k
(-1)^{\tau_j}PU_{\varepsilon,x_j,a_j},
\end{equation}
where $\tau_j=0$ or 1 and $a_j$ is a constant close to $\kappa_j$, $j=1,\cdots, k$.

We next consider the locations of the concentration points $x_{0,j}$, $j=1,\cdots, k$.
Suppose that $u_\ep$ is a solution of \eqref{24-24-8}, satisfying
$\operatorname{supp}(-\Delta u_\ep)
\Subset\bigcup_{j=1}^k B_\delta(x_{0,j})$. Choose $x_m$ and the surrounding balls so that $B_\delta(x_m)$ contains
only the $m$-th source, which vanishes near the boundary of the ball.
Then the Pohozaev identities give

\begin{equation}\label{1-26-8}
\begin{split}
 \int_{\partial B_\delta(x_m)} \frac{\partial  u_{\varepsilon} }{\partial \nu}
\frac{\partial  u_{\varepsilon} }{\partial y_l}-\frac12
\int_{\partial B_\delta(x_m)} |\nabla u_{\varepsilon}|^2\nu_l
=0,
\end{split}
\end{equation}
where $\nu$ is the outward unit normal to $\partial B_\delta(x_m)$.
Formally, \eqref{3-8-5} implies that $\mathbf{x}_0=(x_{0,1},\cdots,x_{0,k})$
is a critical point of the Kirchhoff-Routh type function

 \begin{equation}\label{2-26-8}
\begin{split}
\mathcal K_p(\mathbf{x})=
\sum_{j=1}^k \kappa_j^2 R_p(x_j) - \sum_{i\ne j} (-1)^{\tau_i+\tau_j}\kappa_i\kappa_j G_p(x_i, x_j),
\end{split}
\end{equation}
where $\mathbf{x}=(x_1,\cdots,x_k)$, $R_p(x)=H_p(x,x)$ is the Robin function,
and $H_p(y,x)$ is the regular part of $G_p(y, x)$.
Here the normalized vortex circulations tend to
$((-1)^{\tau_1}\kappa_1,\ldots,(-1)^{\tau_k}\kappa_k)$.
Consequently, under the above concentration and far-field asymptotics,
the limiting configuration must be a critical point of $\mathcal K_p$.

{\bf The case   $\alpha=0$}. In this case,  \eqref{n10-24-8} is a perturbation of the following problem

\begin{equation}\label{3-26-8}
\begin{cases}
-\Delta u= \frac1{\ep^2}\sum\limits_{j=1}^k 1_{B_\delta(x_{0,j})} f_j(u+\psi_0) ,\;
\; y\in D_L,\\
u(y_1, 0)=u(y_1, \pi)=0,\;\; \text{$u$ is periodic in $y_1$ in $D_L$},
\end{cases}
\end{equation}
The approximated solutions   still have the form of \eqref{eq3-7}, but the constant
$a_j$ is close to close to $\kappa_j+ \psi_{01}(x_{0,j})$, $j=1,\cdots, k$,
where $\psi_{01}(y) =\gamma\Bigl(\frac{\pi^2}{8} -\frac12\big( y_2-\frac\pi2\bigr)^2\Bigr)$,
which is the main term of the background flow $\psi_0$.

Suppose that $u_\ep$ is a solution of \eqref{3-26-8}, satisfying  $\operatorname{supp}(-\Delta u_\ep) 
\subset\subset\cup_{j=1}^k B_\delta(x_{0, j})$. Then we have
the following Pohozaev identities

\begin{equation}\label{6-26-8}
\begin{split}
 \int_{\partial B_\delta(x_m)} \frac{\partial  u_{\varepsilon} }{\partial \nu}
\frac{\partial  u_{\varepsilon} }{\partial y_l}-\frac12
\int_{\partial B_\delta(x_m)} |\nabla u_{\varepsilon}|^2\nu_j-\frac1{\ep^2}\int_{ B_\delta(x_m)}
f_m(u+\psi_0)\frac{\partial \psi_0}{\partial y_l}
=0.
\end{split}
\end{equation}
Thus, $\mathbf{x}_0=(x_{0,1},\cdots,x_{0,k})$
is a critical point of the following function

\begin{equation}\label{7-26-8}
\begin{split}
\tilde{\mathcal K_p}(\mathbf{x})=
\sum_{j=1}^k \kappa_j^2 R_p(x_j) - \sum_{i\ne j} (-1)^{\tau_i+\tau_j}\kappa_i\kappa_j G_p(x_i, x_j)-2
\sum_{j=1}^k (-1)^{\tau_j} \kappa_j \tilde  \psi(x_j).
\end{split}
\end{equation}

\bigskip

We summarize the approximate solutions for \eqref{eq1-2}--\eqref{eq1-3}
and \eqref{1-23-8} as follows.

\medskip

{\bf Case $\alpha>0$}. The approximate solution for $u$ is
$\mathcal{U}_{\varepsilon,\mathbf{x},\mathbf{a}}$, with $a_j$ close to
$\kappa_j$ and $x_j$ close to $x_{0,j}$. The approximate solution for
$\Gamma_2(t,\pi)$ is determined by

\begin{equation}\label{80-24-8}
-\alpha^2\partial_{y_1}^2\Gamma_2+g\Gamma_2=-\frac12  \bigl(\frac{\partial \mathcal{U}_{\varepsilon,\mathbf{x},\mathbf{a}}}{\partial y_2}\bigr)^2+
\frac1{2L} \int_0^L \bigl(\frac{\partial \mathcal{U}_{\varepsilon,\mathbf{x},\mathbf{a}}}{\partial y_2}\bigr)^2,\quad \text{on}\; \{y_2=\pi\},
\end{equation}
with periodic boundary conditions at $t=0, \; L$.
For $\alpha>0$, we seek a normalized reduced equation that is a
$C^1$-small perturbation of $\nabla\mathcal K_p=0$.
Once this estimate and the required linear solvability are established,
a critical point $\mathbf{x}_0=(x_{0,1},\ldots,x_{0,k})$ that is
non-degenerate after removing the common horizontal translation persists
and gives a concentrating solution of \eqref{10-24-8}--\eqref{61-21-8}.

\medskip
 {\bf Case $\alpha=0$}. The approximate solution for $u$ is
$\mathcal{U}_{\varepsilon,\mathbf{x},\mathbf{a}}$, with
$a_j$ close to $\kappa_j-(-1)^{\tau_j}\tilde\psi(x_j)$
and $x_j$ close to $x_{0,j}$.
Here $\tilde\psi$ is the approximation of $\psi_0$ associated with the
approximate conformal map. The approximate solution for
$\Gamma_2$ is the solution of 

\begin{equation}\label{81-24-8}
\begin{cases}
\Delta \Gamma_2=0,\;\; \text{in}\; D_L,\\
\Gamma_2(y_1,0)=0,  \\
-\frac{\pi^2\gamma^2}4 \frac{\partial \Gamma_2}{\partial y_2}
+\frac{\pi^2\gamma^2}{4L} \int_{0}^L\frac{\partial \Gamma_2}{\partial y_2}+\bigl(g+\frac\pi2 \gamma^2
  \bigr)\Gamma_2= \frac{\pi \gamma}2 \frac{\partial \mathcal{U}_{\varepsilon,\mathbf{x},\mathbf{a}}}{\partial y_2}
  -\frac{\pi \gamma}2 \frac1L \int_{0}^L\frac{\partial \mathcal{U}_{\varepsilon,\mathbf{x},\mathbf{a}}}{\partial y_2},\quad \text{on}\; \{y_2=\pi\},\\
\text{ $\Gamma_2$  is periodic in $y_1$ in $D_L$}.
\end{cases}
\end{equation}
For $\alpha=0$, we impose the boundary nonresonance condition and match
the vortex parameters with the approximate boundary. A fixed-domain critical
point alone does not imply solvability of \eqref{n10-24-8}--\eqref{21-24-8}.
We must first derive the reduced equations, retaining the above circulation
scale and the dependence of $\psi_0$ on the boundary. We then need to prove
non-degeneracy and estimate the remainder with the same normalization.

\bigskip
\subsection{Critical points of Kirchhoff-Routh functions}
We finally consider the Kirchhoff-Routh type function $\mathcal K_p$
and the fixed-domain function $\tilde{\mathcal K}_p$. The Green's function
for $-\Delta $ in $D$ with zero Dirichlet boundary conditions is given by

\begin{equation}\label{1-9-10}
\begin{split}
G(y,x)=\frac{1}{4\pi}\ln (1+\frac{2 \sin x_2\sin y_2}{\cosh(y_1-x_1)-\cos (y_2-x_2)}).
\end{split}
\end{equation}
Hence, $G_p(y,x)$, the Green's function for $-\Delta $ in $D_L$ with
zero Dirichlet boundary conditions on $y_2=0$ and $y_2=\pi$ and periodic
boundary conditions in $y_1$, can be written as

\begin{equation}\label{10-26-8}
G_p(y,x)=\frac{1}{4\pi}\sum_{j=-\infty}^{+\infty}\ln (1+\frac{2 \sin x_2\sin y_2}{\cosh(y_1+jL-x_1)-\cos (y_2-x_2)}).
\end{equation}
The regular part of $G_p(y,x)$ is

\begin{equation}\label{11-26-8}
H_p(y,x)=\frac1{2\pi}\ln \frac1{|y-x|}-\frac{1}{4\pi}\sum_{j=-\infty}^{+\infty}\ln (1+\frac{2 \sin x_2\sin y_2}{\cosh(y_1+jL-x_1)-\cos (y_2-x_2)}),
\end{equation}
and hence

\begin{equation}\label{12-26-8}
R_p(x)=-\frac{1}{2\pi}\ln (2\sin x_2)-\frac{1}{4\pi}\sum_{j\ne 0}\ln (1+\frac{2 \sin^2 x_2}{\cosh(jL)-1}).
\end{equation}

We first observe that, when all the points have height $\pi/2$,
the derivatives of $\mathcal K_p$ and the fixed-domain function
$\tilde{\mathcal K}_p$ with respect to each $x_{j2}$ vanish.
The mixed horizontal--vertical derivatives also vanish.
We therefore consider configurations
$x_j=(x_{j1},\pi/2)$, $j=1,\ldots,k$.
Our objective is to obtain a physical free surface with no axis of
reflection. Configurations for which the set of horizontal positions
$\{x_{11},\ldots,x_{k1}\}$ has no reflection symmetry provide candidate
concentration points. 

We also have
$\mathcal K_p(\mathbf x+te)=\mathcal K_p(\mathbf x)$ and
$\tilde{\mathcal K}_p(\mathbf x+te;\mathbf m)
=\tilde{\mathcal K}_p(\mathbf x;\mathbf m)$,
where $e=((1,0),\ldots,(1,0))$ and the strengths are held fixed.
Thus, horizontal translations of a critical point are also critical
points, and the Hessian has the corresponding null vector.
We remove this translation and study the Hessian in the remaining
coordinates. To apply the implicit function theorem, this reduced
Hessian must be non-singular, and the reduced equations must have the
same translation invariance with a controlled $C^1$ remainder.
The fixed-domain computations in Appendix~\ref{sec:appendix-KR}
address the finite-dimensional Hessian; the circulation normalization
and the boundary-dependent terms must be checked in the full reduction.

\section{estimate of the approximate solutions}

 In this section, we estimate the approximate solution $\mathcal{U}_{\varepsilon,\mathbf{x},\mathbf{a}}$ defined in \eqref{eq3-7}.
First, we estimate $PU_{\varepsilon,x,a}(y)$.

 \begin{lemma}\label{l1-8-5}
 We have

 \[
 PU_{\varepsilon,x,a}(y)= U_{\varepsilon,x,a}(y)+\frac{2a\pi}{\ln s_\varepsilon}H(y,x) -\frac{2\pi a}{\ln s_{\varepsilon}}\sum_{j\ne 0} G(x+jL e_1, y)
  + O(  \frac{ s_\ep^2}{|\ln s_{\varepsilon}|} \sum_{j\ne 0}e^{-|L e_1+x-y|  } ),
 \]
 where $H(y,x)$ is the regular part of the Green's function $G(y,x)$ for $-\Delta $ in
$D$.
 \end{lemma}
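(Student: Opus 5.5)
The plan is to compare $PU_{\varepsilon,x,a}$ with $U_{\varepsilon,x,a}$ through the representation \eqref{2-8-5} and the lattice decomposition \eqref{1-8-5} of $G_p$. Write $\mu:=\frac{1}{\ep^2}\int (U_{\varepsilon,x,a}-a)_+^p$. Integrating \eqref{eq3-1} over $B_{s_\ep}$ and using the outer profile $\frac{a}{\ln s_\ep}\ln|y|$ gives the flux identity
\[
\mu=-\oint_{\partial B_{s_\ep}}\partial_\nu U_{\varepsilon,a}=-\frac{2\pi a}{\ln s_\ep}.
\]
This accounts for the coefficients $\pm 2\pi a/\ln s_\ep$ in the statement. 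By \eqref{1-8-5},
\[
PU_{\varepsilon,x,a}(y)=\frac1{\ep^2}\int G(z,y)(U_{\varepsilon,x,a}(z)-a)_+^p\,dz+\sum_{j\ne0}\frac1{\ep^2}\int G(z+jLe_1,y)(U_{\varepsilon,x,a}(z)-a)_+^p\,dz .
\]
We treat the $j=0$ term and the image terms separately.

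For the $j=0$ term, split $G(z,y)=\frac1{2\pi}\ln\frac1{|z-y|}-H(z,y)$. Since $U_{\varepsilon,a}$ solves \eqref{eq3-1} in $\mathbb R^2$ and grows like $\frac{a}{\ln s_\ep}\ln|y|$, its Newtonian potential representation is
\[
\frac{1}{2\pi\ep^2}\int\ln\frac1{|z-y|}(U_{\varepsilon,x,a}(z)-a)_+^p\,dz=U_{\varepsilon,x,a}(y)
\]
exactly. To see this, note that the difference of the two sides is harmonic in $\mathbb R^2$. Both sides equal $-\frac{\mu}{2\pi}\ln|y-x|=\frac{a}{\ln s_\ep}\ln|y-x|$ outside $B_{s_\ep}(x)$, by radial symmetry and Newton's theorem, so the difference vanishes. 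For the regular part, $H(\cdot,y)$ is harmonic near $x$ provided $B_{s_\ep}(x)$ stays away from $\partial D$. The mean value property for the radial source then gives exactly $-\mu H(x,y)=\frac{2\pi a}{\ln s_\ep}H(y,x)$, using the symmetry of $H$. This yields the first two terms with no error.

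For $j\ne0$, $z\mapsto G(z+jLe_1,y)$ is harmonic on $B_{s_\ep}(x)$ whenever $y$ is not a translate of a point in that ball, so the mean value property gives $\mu G(x+jLe_1,y)$ exactly. Near the images, i.e. $y$ within $s_\ep$ of $x+jLe_1$ after periodic identification, harmonicity fails, and I would control the discrepancy by the smallness of the ball. Expanding $G(z+jLe_1,y)$ to second order around $z=x$ gives a bound of order $s_\ep^2\,|\mu|\,\sup|D^2_zG|$ on the ball. Since $G$ decays like $e^{-|y_1-x_1|}$ in the strip (a consequence of \eqref{1-9-10}), this matches the stated error $O(\frac{s_\ep^2}{|\ln s_\ep|}\sum_{j\ne0}e^{-|Le_1+x-y|})$. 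The first-order term vanishes by radial symmetry, so the error is genuinely quadratic in $s_\ep$. The sum converges absolutely by the exponential decay.

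The main obstacle is justifying the harmonicity and mean-value steps uniformly. This needs $B_{s_\ep}(x)\Subset D_L$ and $y$ separated from the periodic images of $x$. The statement is meant for all $y\in D_L$, and the image term $j=\pm1$ can come close to $y$ when $x$ is near $x_1=0$ or $L$. I would first restrict to $x$ in a compact subset of the fundamental cell, e.g. near $x_{0,j}$ as in the construction. In that case all image points $x+jLe_1$, $j\ne0$, lie at distance at least a fixed $\delta$ from $\overline{D_L}$ in the periodic sense. The mean value identity is then exact and the $O(\cdot)$ term only absorbs the analytic Taylor remainder. I would then check the decay bound on $\nabla^2_z G$ directly from the explicit formula \eqref{1-9-10}.
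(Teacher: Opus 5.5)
Your argument is correct and is essentially the paper's. Both split $G_p$ into the strip Green's function and its translates. Both identify the $j=0$ part exactly as $U_{\varepsilon,x,a}+\frac{2a\pi}{\ln s_\varepsilon}H(y,x)$: the paper notes that $U_{\varepsilon,x,a}-\bar PU_{\varepsilon,x,a}$ is harmonic in $D$ with boundary values $\frac{a}{\ln s_\varepsilon}\ln|y-x|$, while you use Newton's theorem plus the mean value property for $H(\cdot,y)$. Both then evaluate the image terms at $x$ using the total mass $-2\pi\varepsilon^2a/\ln s_\varepsilon$.

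Your exact mean-value treatment of the images is slightly sharper than the paper's second-order Taylor expansion, and the obstacle you anticipate does not arise. For $y\in D_L$, the singular point $y-jLe_1$ with $j\ne0$ lies outside $\overline{D_L}$, while $\overline{B_{s_\varepsilon}(x)}\subset D_L$. Hence $z\mapsto G(z+jLe_1,y)$ is harmonic on the ball for every admissible $x$, and the image terms are reproduced with zero error. No restriction to a compact set of $x$ is needed, and a Taylor expansion would not have helped if the singularity had entered the ball.
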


 \begin{proof}
 In view of $(U_{\varepsilon,x,a}(z)-a)_+=0$ outside
$B_{s_\ep}(x)$, from \eqref{2-8-5}, we find that

\begin{equation}\label{5-8-5}
\begin{split}
PU_{\varepsilon,x,a}(y)=&  \frac1{\ep^2}
\int_{D} G(z,y) (U_{\varepsilon,x,a}(z)-a)_+^p\,dz\\
&+
\frac1{\ep^2}\sum_{j\ne 0} \int_{D} G(z+jL e_1, y)
  (U_{\varepsilon,x,a}(z)-a)_+^p\,dz.
\end{split}
\end{equation}
 Let

 \[
 \bar PU_{\varepsilon,x,a}=\frac1{\ep^2}
\int_{D} G(z,y) (U_{\varepsilon,x,a}(z)-a)_+^p\,dz.
 \]
Then $\bar PU_{\varepsilon,x,a}$ is the solution of the following problem

\begin{equation}\label{nn2-8-10}
\begin{cases}
-\ep^2 \Delta v=(U_{\varepsilon,x,a}-a)_+^p,& \text{in}\;D,\\
v=0,& \text{on}\;\partial D.
\end{cases}
\end{equation}
Let $\varphi_{\varepsilon,x,a}=U_{\varepsilon,x,a}-\bar PU_{\varepsilon,x,a}$. We
have

\[ \begin{cases}
 \Delta \varphi_{\varepsilon,x,a}=0,& \text{in}\;D,\\
\varphi_{\varepsilon,x,a}=U_{\varepsilon,x,a},& \text{on}\;\partial D.
\end{cases}
\]
Since $U_{\varepsilon,x,a}= \frac{a}{\ln s_\varepsilon}\ln|y-x| $, we conclude that

\[
\varphi_{\varepsilon,x,a}=-\frac{2a\pi}{\ln s_\varepsilon}H(y,x).
\]
  So we see

\begin{equation}\label{eq3-5}
\bar PU_{\varepsilon,x,a}(y)=U_{\varepsilon,x,a}(y)+\frac{2a\pi}{\ln s_\varepsilon}H(y,x).
\end{equation}

For $j\ne 0$, it holds
that

\begin{equation}\label{6-8-5}
\begin{split}
&
 \int_{D} G(z+jL e_1, y)
  (U_{\varepsilon,x,a}(z)-a)_+^p\,dz\\
  =&\int_{B_{s_\ep  } (x)} G(z+jL e_1, y)
  (U_{\varepsilon,x,a}(z)-a)_+^p\,dz\\
  =&\int_{B_{s_\ep  } (x)}\bigl[ G(x+jL e_1, y)+\langle
  \nabla G(x+jL e_1, y), z-x\rangle \\
  &\qquad\quad+ O( |z-x|^2 e^{-|L e_1+x-y|  }  )
  \bigr]
  (U_{\varepsilon,x,a}(z)-a)_+^p\,dz\\
  =&\int_{B_{s_\ep  } (x)}\bigl[ G(x+jL e_1, y)+ O( s_\ep^2 e^{-|L e_1+x-y|  }  )
  \bigr]
  (U_{\varepsilon,x,a}(z)-a)_+^p\,dz
\end{split}
\end{equation}

On the other hand, we have

\begin{equation}\label{10-5-8}
\begin{split}
&\int_{B_{x_s}(x)}\,( U_{\varepsilon,x,a}-a)_+^{p}\\
=&\int_{B_{s_{\varepsilon}}(x)}
\Bigl[(\frac{\varepsilon}{s_{\varepsilon}})^{\frac{2}{p-1}}
\phi(\frac{|y-x|}{s_{\varepsilon}})\Bigr]^{p}
=
(\frac{\varepsilon}{s_{\varepsilon}})^{\frac{2p}{p-1}}
s_{\varepsilon}^2 \int_{B_1(0)}
\phi^{p}\\
=&-2\pi\varepsilon^2\frac{a}{\ln s_{\varepsilon}}.
\end{split}
\end{equation}

Inserting \eqref{10-5-8} into \eqref{6-8-5} yields

\begin{equation}\label{11-8-5}
\begin{split}
&
 \int_{D} G(z+jL e_1, y)
  (U_{\varepsilon,x,a}(z)-a)_+^p\,dz\\
  =& -2\pi\varepsilon^2\frac{a}{\ln s_{\varepsilon}}G(x+jL e_1, y)
  + O( s_\ep^2 e^{-|L e_1+x-y|  } \varepsilon^2\frac{1}{\ln s_{\varepsilon}} ).
\end{split}
\end{equation}

\end{proof}

We now turn to the construction of the approximated solution for the elliptic problems
\eqref{3-26-8} and \eqref{24-24-8}. We just study \eqref{3-26-8}, because \eqref{24-24-8}
can be regarded as a special case of \eqref{3-26-8} with $\gamma=0$.

Let
\begin{equation}\label{1-27-8}
\psi_{01}(y) =\gamma\Bigl(\frac{\pi^2}{8} -\frac12\big( y_2-\frac\pi2\bigr)^2\Bigr).
\end{equation}
and let $\psi_{02}$ be the solution of

 \begin{equation}\label{100-29-8}
 \begin{cases}
-\Delta \psi_{02} =2 \gamma\Gamma_2, \quad \text{in}\; D_L=:(0, L)\times (0, \pi),\\
\psi_{02}=0,\;\text{ on $y_2=0,\;\pi$, \;\  and $\psi_{02}$ is periodic in $y_1$ with period $L$}.
\end{cases}
\end{equation}

 Denote by $\Gamma_0$ the solution of the following problem
 
 \begin{equation}\label{8-29-8}
\begin{cases}
\Delta  \Gamma_0=0,\;\; \text{in}\; D_L,\\
 \Gamma_0(y_1,0)=0,  \\
-\frac{\pi^2\gamma^2}4 \frac{\partial  \Gamma_0}{\partial y_2}
+\frac{\pi^2\gamma^2}{4L} \int_{0}^L\frac{\partial \Gamma_0}{\partial y_2}+\bigl(g+\frac\pi2 \gamma^2 L
  \bigr)\Gamma_0= \frac{\pi \gamma}2 \frac{\partial \mathcal{U}_{\varepsilon,\mathbf{x},\mathbf{\hat a}}}{\partial y_2}
  -\frac{\pi \gamma}2 \frac1L \int_{0}^L\frac{\partial \mathcal{U}_{\varepsilon,\mathbf{x},\mathbf{\hat a}}}{\partial y_2},\quad \text{on}\; \{y_2=\pi\},\\
\text{ $\Gamma_0$  is periodic in $y_1$ in $D_L$}.
\end{cases}
\end{equation}
  with 
 
 \begin{equation}\label{2-29-8}
\hat  a_j=\kappa_j-(-1)^{\tau_j}\psi_{01}(x_j).
\end{equation}

Let

 \begin{equation}\label{20-2-9}
\tilde \psi=   \psi_{01}+\psi_{02}(\Gamma_0).
\end{equation}
Then $\tilde \psi$ is a better approximate of the background flow $\psi_0$.

By \eqref{eq3-7}, we have
\begin{equation}\label{eq3-9}
\begin{split}
-&\ep^2\Delta \mathcal{U}_{\varepsilon,\mathbf{x},\mathbf{a}}-
\sum\limits_{j=1}^k
 1_{B_\delta(x_{0,j})} f_j(\mathcal{U}_{\varepsilon,\mathbf{x},
\mathbf{a}}+\tilde  \psi)
\\
=&
\sum\limits_{j=1}^k (-1)^{\tau_j} (U_{\varepsilon,x_j,a_j}
-a_j)_+^p
-
\sum\limits_{j=1}^k1_{B_\delta(x_{0,j})} f_j(\mathcal{U}_{\varepsilon,\mathbf{x},
\mathbf{a}}+\tilde  \psi).
\end{split}
\end{equation}
We now choose $a_j$ suitably, such that  the right hand side of \eqref{eq3-9}
is small. 

Fix $\delta>0$  small.
For $y\in B_{\delta}(x_j)$ with $\tau_j=0$, we have
$f_j(u)= (u-\kappa_j)_+^p$.  From \eqref{eq3-5},
we see that for  $|y-x|\ge \delta>0$,

\[
U_{\varepsilon,x,a}(y)+\frac{2a\pi}{\ln s_\varepsilon}H(y,x)= -\frac{2a\pi}{\ln s_\varepsilon} G(y,x),
\]
which, in view of Lemma~\ref{l1-8-5}, gives

\begin{equation}\label{12-8-5}
\begin{split}
PU_{\varepsilon,x,a}(y)=&-\frac{2a\pi}{\ln s_\varepsilon} G(y,x)  -\frac{2\pi a_{\varepsilon}}{\ln s_{\varepsilon}}\sum_{j\ne 0} G(x+jL e_1, y)
  + O(  \frac{ s_\ep^2}{\ln s_{\varepsilon}} \sum_{j\ne 0}e^{-|L e_1+x-y|  } )\\
 =&-\frac{2a\pi}{\ln s_\varepsilon} G_p(y,x)  + O(  \frac{ s_\ep^2}{\ln s_{\varepsilon}} \sum_{j\ne 0}e^{-|L e_1+x-y|  } ), \quad y\notin B_\delta(x).
\end{split}
\end{equation}

From Lemma~\ref{l1-8-5} and \eqref{12-8-5}, we find

\begin{equation}\label{eq3-13}
\begin{split}
&\mathcal{U}_{\varepsilon,\mathbf{x},
\mathbf{a}}(y)+\tilde  \psi(y)
-\kappa_j
\\
=&U_{\varepsilon,x_j,a_{\varepsilon,j}}(y)-\kappa_j+\tilde  \psi(y)+\frac{2\pi a_{\varepsilon,j}}{\ln s_{\varepsilon,j}}H_p(y,x_j)
-\sum\limits_{i=1,i\neq j}^k\frac{2\pi a_{\varepsilon,i}(-1)^{\tau_i}}{\ln s_{\varepsilon,i}}G_p(y,x_i)
\\
=&U_{\varepsilon,x_j,a_{\varepsilon,j}}(y)-\kappa_j+\tilde  \psi(x_j)+\frac{2\pi a_{\varepsilon,j}}{\ln s_{\varepsilon,j}}H_p(x_j,x_j)-\sum\limits_{i=1,i\neq j}^k\frac{2\pi a_{\varepsilon,i}(-1)^{\tau_i}}{\ln s_{\varepsilon,i}}G_p(x_j,x_i)\\
&+\langle \nabla \tilde  \psi(x_j),y-x_j\rangle+\frac{2\pi a_{\varepsilon,j}}{\ln s_{\varepsilon,j}}\langle \nabla H_p(x_j,x_j),y-x_j\rangle
\\
&-\sum\limits_{i=1,i\neq j}^k\frac{2\pi a_{\varepsilon,i}(-1)^{\tau_i}}{\ln s_{\varepsilon,i}}\langle \nabla G_p(x_j,x_i),y-x_j\rangle
+O(\frac{s_{\varepsilon,j}^2}{|\ln\varepsilon|}).
\end{split}
\end{equation}

On the other hand, for $y\in B_{\delta}(x_j)$ with $\tau_j=1$, we have
$f_j(u)= -(-u-\kappa_j)_+^p$, and

\begin{equation}\label{nneq3-13}
\begin{split}
&-(\mathcal{U}_{\varepsilon,\mathbf{x},
\mathbf{a}}(y)+\tilde  \psi(y))
-\kappa_j
\\
=&U_{\varepsilon,x_j,a_{\varepsilon,j}}(y)-\kappa_j-\tilde  \psi(x_j)+\frac{2\pi a_{\varepsilon,j}}{\ln s_{\varepsilon,j}}H_p(x_j,x_j)+\sum\limits_{i=1,i\neq j}^k\frac{2\pi a_{\varepsilon,i}(-1)^{\tau_i}}{\ln s_{\varepsilon,i}}G_p(x_j,x_i)\\
&-\langle \nabla \tilde  \psi(x_j),y-x_j\rangle+\frac{2\pi a_{\varepsilon,j}}{\ln s_{\varepsilon,j}}\langle \nabla H_p(x_j,x_j),y-x_j\rangle
\\
&+\sum\limits_{i=1,i\neq j}^k\frac{2\pi a_{\varepsilon,i}(-1)^{\tau_i}}{\ln s_{\varepsilon,i}}\langle \nabla G_p(x_j,x_i),y-x_j\rangle
+O(\frac{s_{\varepsilon,j}^2}{|\ln\varepsilon|}).
\end{split}
\end{equation}

So we see $a_j$ and $s_j$ should be determined by the following relations

\begin{equation}\label{nneq3-10}
a_{\varepsilon,j}=\kappa_j-(-1)^{\tau_j}\tilde \psi(x_j)-\frac{2\pi a_{\varepsilon,j}}{\ln s_{\varepsilon,j}} R_p(x_j)+\sum\limits_{i=1,i\neq j}^k\frac{2\pi a_{\varepsilon,i}(-1)^{\tau_i+\tau_j}}{\ln s_{\varepsilon,i}}G_p(x_j,x_i),
\end{equation}
and
\begin{equation}\label{eq3-11}
(\frac{\varepsilon}{s_{\varepsilon,j}})^\frac{2}{p-1}\phi^\prime(1)=\frac{a_{\varepsilon,j}}{\ln s_{\varepsilon,j}}.
\end{equation}
 We can use the implicit function theorem to  prove that for $\varepsilon>0$ small, there exist $(a_{\varepsilon,1}(\mathbf{x}),\cdots,a_{\varepsilon,k}(\mathbf{x}))$ and $(s_{\varepsilon,1}(\mathbf{x}),\cdots,s_{\varepsilon,k}(\mathbf{x}))$ such that
 \eqref{nneq3-10}
and \eqref{eq3-11} holds.

Note that $a_{\varepsilon,j}$ and $s_{\varepsilon,j}$ depend on $\mathbf{x}$.
For simplicity, in the rest of this paper, we will use 
$a_{\varepsilon,j}$ and $s_{\varepsilon,j}$ to denote  $a_{\varepsilon,j}(\mathbf{x})$ and $s_{\varepsilon,j}(\mathbf{x})$, respectively. With this choice of
$a_{\varepsilon,j}(\mathbf{x})$, it holds

\begin{equation}\label{eq3-10}
\mathcal{U}_{\varepsilon,\mathbf{x},\mathbf{a}}+\tilde  \psi-\kappa_j=
U_{\varepsilon,x_j,a_{\varepsilon,j}}-a_{\varepsilon,j}
+O(\frac{s_{\varepsilon,j}}{|\ln\varepsilon|}),\quad y\in B_{s_{\varepsilon,j}}(x_j).
\end{equation}

We can easily check that
\begin{equation}\label{eq3-14}
\begin{split}
\frac{\partial a_{\varepsilon,j}}{\partial x_{ih}}=O(\frac{1}{|\ln\varepsilon|}), \quad \frac{1}{\ln s_{\varepsilon,j}}=\frac{1}{A_{\varepsilon,j}}+O(\frac{\ln|\ln\varepsilon|}{|\ln\varepsilon|^3}),
\end{split}
\end{equation}
where $A_{\varepsilon,j}=\ln\varepsilon+\frac{p-1}{2}\ln|\ln\varepsilon|+\frac{1-p}{2}\ln\frac{\kappa_j}{|\phi^\prime(1)|}$.

 By  direct computations, we obtain the following expansions which will be used in the next sections. For $j=1,\cdots,k$, and $h=1,2$,
\begin{equation}\label{eq3-15}
\begin{array}{ll}
 \displaystyle\frac{\partial U_{\varepsilon,x_j,a_{\varepsilon,j}}(y)}{\partial x_{jh}}
 =\left\{
 \begin{array}{ll}
\displaystyle\frac{a_{\varepsilon,j}}{\phi^\prime(1)\ln s_{\varepsilon,j}}\phi^\prime(\frac{|y-x_j|}{s_{\varepsilon,j}})
\frac{x_{j,h}-y_h}{|y-x_j|}\frac{1}{s_{\varepsilon,j}}&+O\left(\frac{1}{|\ln \varepsilon|}\right),\\
 \displaystyle &y\in B_{s_{\varepsilon,j}}(x_j),\\
\displaystyle
\frac{a_{\varepsilon,j}}{\ln s_{\varepsilon,j}}\frac{x_{jh}-y_h}{|y-x_j|^2}
+O\left(\frac{\ln|y-x_j|}{|\ln \varepsilon|^2}\right),
~~&y\in D_L\setminus B_{s_{\varepsilon,j}}(x_j).
\end{array}
\right.
\end{array}
\end{equation}

\begin{remark}\label{re1-15-5}

 From \eqref{2-8-5}, we have

\begin{equation}\label{22-10-5}
PU_{\varepsilon,x,a}(y)= PU_{\varepsilon,x-ce_1,a}(y-c e_1),
\end{equation}
where $e_1=(1, 0)$. This gives

\begin{equation}\label{22-10-5}
\mathcal{U}_{\varepsilon,\mathbf{x},\mathbf{a}}(y)=\mathcal{U}_{\varepsilon,\mathbf{\hat x},\mathbf{a}}(y-c e_1),
\end{equation}
where $\mathbf{\hat x}= \mathbf{x}-(ce_1,\cdots,ce_1)$.

\end{remark}

\section{The  linear problems}

In this section, we study the linear operators for the elliptic
equation and for the free boundary equations.

\subsection{Case $\alpha=0$}  We will look for solution of the form
$u=\mathcal{U}_{\varepsilon,\mathbf{x},\mathbf{a}}+\omega$ and $\Gamma_2=\Gamma_0+\zeta
$, where 
 $\Gamma_0$ is the solution of
\eqref{8-29-8}. Note that the background flow $\psi_0=\psi_0(\Gamma_2)$ 
also depends on $\Gamma_2$. We have

\[
\psi_0(\Gamma_2) =\psi_{01}+\psi_{02}(\Gamma_2)+O\Bigl(| \Gamma'|^2   \Bigr)
\]
where  $\psi_{01}$  and $\psi_{02}$ are defined in \eqref{30-21-8} and \eqref{3-15-6} respectively.

 Then we need to study the following  linea problem 
 
\begin{equation}\label{eq4-1}
\begin{split}
L_{\varepsilon,\mathbf{x}}(\omega,\zeta):=-\ep^2\Delta v-\sum\limits_{j=1}^k  1_{B_\delta(x_{0,j})} \Bigl[&f'_j(\mathcal{U}_{\varepsilon,\mathbf{x},\mathbf{a}}+\tilde \psi)(\omega+\psi_{02}(\zeta)
)\\
&+2
f_j(\mathcal{U}_{\varepsilon,\mathbf{x},\mathbf{a}}+\tilde \psi)\frac{\partial \zeta}{\partial y_2}
\Bigr]= f,
\end{split}
\end{equation}
and  

\begin{equation}\label{6-31-8}
\begin{cases}
\Delta \zeta=0,\quad \text{in}\; D_L;\\
\text{ $\zeta$ is   periodic in $y_1$, $\phi=0$ on $y_2=0,\;\pi$ };\\
-\frac{\pi^2\gamma^2}4 \frac{\partial \zeta}{\partial y_2}
+\frac{\pi^2\gamma^2}{4L} \int_{0}^L\frac{\partial \zeta}{\partial y_2}+\bigl(g+\frac\pi2 \gamma^2 L
  \bigr)\zeta-
  \frac{\pi \gamma}2 \frac{\partial \omega}{\partial y_2}
  +\frac{\pi \gamma}2 \frac1L \int_{0}^L\frac{\partial \omega}{\partial y_2}=f_1,\quad y_2=\pi.
  \end{cases}
\end{equation}
\bigskip

Let
\begin{equation}\label{eq4-9}
V_{\varepsilon,\mathbf{x},j,h}=\frac{\partial PU_{\varepsilon,x_j,a_{\varepsilon,j}}}{\partial x_{jh}},\quad
Z_{\varepsilon,\mathbf{x},j,h}=\Delta V_{\varepsilon,\mathbf{x},j,h},\,\,\, j=1,\cdots,k,\,\,\,h=1,2.
\end{equation}
Set
\begin{equation}\label{eq4-10}
\begin{split}
E_{\varepsilon,\mathbf{x}}=\big\{\omega:\,&\omega\in W^{2,\infty}(D_L), \;
\omega(y_1,0)=\omega(y_1,\pi)=0,\\
&\omega(0,y_2)=\omega(L,y_2),\; \omega_{y_1}(0,y_2)=\omega_{y_1}(L,y_2),
\\
&\int_{D_L}\, Z_{\varepsilon,\mathbf{x},j,h}\omega=0,\,j=1,\cdots, k,\, h=1,2\big\},
\end{split}
\end{equation}
and
\begin{equation}\label{eq4-11}
F_{\varepsilon,\mathbf{x}}=\big\{\omega:\,\omega\in L^\infty(D_L), \int_{D_L}\, V_{\varepsilon,\mathbf{x},j,h}\omega=0,\,j=1,\cdots, k,\, h=1,2\big\}.
\end{equation}

For any $u\in L^\infty(D_L)$, we define the projection operator to $F_{\varepsilon,\mathbf{x}}$ as follows
\begin{equation}\label{eq4-12}
Q_{\varepsilon,\mathbf{x}}u=u-\sum\limits_{j=1}^k
\sum\limits_{h=1}^2\varepsilon^2b_{jh}Z_{\varepsilon,\mathbf{x},j,h},
\end{equation}
where $b_{j1}$ and $b_{j2}$ are chosen so that $Q_{\varepsilon,\mathbf{x}}u\in F_{\varepsilon,\mathbf{x}}$.  Note that
$b_{jh}$ is determined by the following equations

\[
\int_{D_L}\, u V_{\varepsilon,\mathbf{x},i,l}=
\sum\limits_{j=1}^k\sum\limits_{h=1}^2\varepsilon^2b_{jh}
\int_{D_L}\,Z_{\varepsilon,\mathbf{x},j,h}V_{\varepsilon,\mathbf{x},i,l},\quad i=1,\cdots, k,\,\,\, l=1,2.
\]
In view of

\begin{equation}\label{eq4-13}
\begin{split}
\varepsilon^2\int_{D_L}\,Z_{\varepsilon,\mathbf{x},j,h}V_{\varepsilon,\mathbf{x},
i,l}=&p\int_{D_L}\, (U_{\varepsilon,x_j,a_{\varepsilon,j}}-a_{\varepsilon,j})_+^{p-1}(\frac{\partial U_{\varepsilon,x_j,a_{\varepsilon,j}}}{\partial x_{jh}}-\frac{\partial a_{\varepsilon,j}}{\partial x_{jh}})V_{\varepsilon,\mathbf{x},i,l}
\\
=&\delta_{ij}\delta_{hl}\frac{c}{|\ln\varepsilon|^{p-1}}+O(\frac{s_{\varepsilon,1}}{|\ln \varepsilon|^{p-1}}),
\end{split}
\end{equation}
where $c>0$ is a constant, $\delta_{ij}=1$ if $i=j$, $\delta_{ij}=0$ if $i\neq j$,   and

\begin{equation}\label{eq4-14}
\begin{split}
|\int_{D_L}\, uV_{\varepsilon,\mathbf{x},i,l}|=& |\int_{D_L}\,\frac{\partial PU_{\varepsilon,x_i,a_{\varepsilon,i}}}{\partial x_{il}} u|
\\
\leq& |\int_{B_{s_{\varepsilon,i}}(x_i)}
\,\frac{\partial U_{\varepsilon,x_i,a_{\varepsilon,i}}}{\partial x_{il}} u|
+\frac{C}{|\ln s_{\varepsilon,i}|}\|u \|_{L^\infty(D)}
\\
\le &  \frac{C}{|\ln s_{\varepsilon,i}|}\|u \|_{L^\infty(D)},
\end{split}
\end{equation}
 we find that

\[
|b_{jh  }|\le C|\ln s_{\varepsilon,i}|^{p-2}\|u \|_{L^\infty(D_L)}
\]
Thus, we can deduce that $Q_{\varepsilon,\mathbf{x}}$ is a bounded linear
operator from $L^\infty(D_L)$ to $F_{\varepsilon,\mathbf{x}}$.

\begin{remark}\label{re1-1-9}
If $u=0$ in  $  D_L\setminus \cup_{j=1}^k B_{Ms_{\varepsilon,j}}(x_j) $, then

\begin{equation}\label{nneq4-14}
\begin{split}
|\int_{D_L}\, uV_{\varepsilon,\mathbf{x},i,l}|
\leq  |\int_{B_{s_{\varepsilon,i}}(x_i)}
\,\frac{\partial U_{\varepsilon,x_i,a_{\varepsilon,i}}}{\partial x_{il}} u|
\le  \frac{C s_{\varepsilon,i}}{|\ln s_{\varepsilon,i}|}\|u \|_{L^\infty(D)},
\end{split}
\end{equation}
This gives $|b_{jh  }|\le Cs_{\varepsilon,i}|\ln s_{\varepsilon,i}|^{p-2}\|u \|_{L^\infty(D_L)}$

\end{remark}

We now consider the following problem

\begin{equation}\label{1-16-1}
Q_{\varepsilon,\mathbf{x}}L_{\varepsilon,\mathbf{x}}(u,\zeta)= f,
\end{equation}
where $u\in E_{\varepsilon,\mathbf{x}}$  and $ f\in F_{\varepsilon,\mathbf{x}}$.

We recall
 that if $v\in L^\infty(\mathbb R^2)$ is a solution of
\begin{equation}\label{eq4-5}
-\Delta v -pw_+^{p-1}v=0,\,\,\, \text{in}\,\, \mathbb{R}^2,
\end{equation}
then $\upsilon\in span\{\frac{\partial w}{\partial y_1},\frac{\partial w}{\partial y_2} \}$. Here,
\[
w(y)=\begin{cases}
\phi(|y|),
\,\,\,  & |y|\leq1,\\
\phi^\prime(1)\ln|y|,\,\,\, & |y|>1
\end{cases}
\]
is the solution of
\[-\Delta w=w_+^p,\,\,\, \text{in}\,\, \mathbb{R}^2.\]

\begin{proposition}\label{pro4-2}
Suppose that $(u,\zeta)$, where $u\in E_{\varepsilon,\mathbf{x}}$, satisfies \eqref{1-16-1}
and \eqref{6-31-8},
with $f=0$ in $D_L\setminus \cup_{j=1}^k B_{Ms_{\varepsilon,j}}(x_j)$ for some large $M>0$. Then
there are constants $C>0$ and $\varepsilon_0>0$, such that
\[
\|u\|_{L^\infty(D_L)}+\| u\|_{C^{2,\beta}(D_L\setminus \cup_{j=1}^k B_{\delta}(x_j))}+\|
\zeta\|_{C^{2,\beta}(D_L)}\leq C\bigl( |\ln\ep|^{p-1}\|f\|_{L^\infty(D_L)}+\|f_1\|_{C^{1,\beta}(0,L)}
\bigr).
\]
\end{proposition}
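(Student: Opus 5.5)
We argue by contradiction, after first splitting off the boundary unknown. Suppose there are sequences $\ep_n\to0$, points $\mathbf x_n$ near $\mathbf x_0$, and pairs $(u_n,\zeta_n)$ with $u_n\in E_{\ep_n,\mathbf x_n}$ solving \eqref{1-16-1} and \eqref{6-31-8} with data $f_n,f_{1,n}$. Normalize so that
\[
\|u_n\|_{L^\infty(D_L)}+\|u_n\|_{C^{2,\beta}(D_L\setminus\cup_j B_\delta(x_{n,j}))}+\|\zeta_n\|_{C^{2,\beta}(D_L)}=1,
\]
and
\[
|\ln\ep_n|^{p-1}\|f_n\|_{L^\infty}+\|f_{1,n}\|_{C^{1,\beta}}\to0.
\]

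\textbf{Step 1: the boundary unknown.} Treat the mixed problem \eqref{6-31-8} as the nonresonant boundary operator of \eqref{23-24-8} with right-hand side $f_{1,n}+\frac{\pi\gamma}{2}\bigl(\partial_{y_2}u_n-\frac1L\int_0^L\partial_{y_2}u_n\bigr)$. The solvability and Schauder estimates of Appendix~A (available since $\lambda_k\ne0$ for all $k$) give
\[
\|\zeta_n\|_{C^{2,\beta}(D_L)}\le C\bigl(\|f_{1,n}\|_{C^{1,\beta}}+\|\partial_{y_2}u_n(\cdot,\pi)\|_{C^{1,\beta}}\bigr).
\]
Because the vortex supports stay a distance $\delta$ from $\{y_2=\pi\}$, interior and boundary Schauder estimates for $-\Delta u_n$ away from $\cup_jB_\delta(x_{n,j})$ control $\|\partial_{y_2}u_n\|_{C^{1,\beta}}$ near the top by $\|u_n\|_{L^\infty}$ plus $\ep_n^{-2}\|f_n\|$ restricted there. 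That restricted term vanishes, since $f_n=0$ outside $\cup_jB_{Ms_{\ep,j}}$. Hence $\|\zeta_n\|_{C^{2,\beta}}\le C(\|u_n\|_{L^\infty}+o(1))$, and it suffices to show $\|u_n\|_{L^\infty}\to0$.

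\textbf{Step 2: the interior equation.} The coupling terms in $L_{\ep,\mathbf x}$ involving $\zeta_n$ are $f_j'(\cdot)\psi_{02}(\zeta_n)$ and $2f_j(\cdot)\partial_{y_2}\zeta_n$. They are supported in $B_{s_{\ep,j}}(x_{n,j})$ and have size $O(|\ln\ep|^{-(p-1)})\|\zeta_n\|$ or smaller, using $(U-a)_+^{p-1}\sim(\ep/s_\ep)^2\sim|\ln\ep|^{-(p-1)}$. After dividing by $\ep^2$ they therefore act like an $f$ of admissible size. For the same reason as in Remark~\ref{re1-1-9}, the Lagrange multipliers $b_{jh}$ generated by $Q_{\ep,\mathbf x}$ are $O(s_\ep|\ln\ep|^{p-2})$ times the sup norm of compactly supported data. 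This reduces the problem to the standard scheme of Cao--Peng--Yan type.

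Rescale around each $x_{n,j}$ by setting $\tilde u_n(z)=u_n(x_{n,j}+s_{\ep,j}z)$. The equation becomes
\[
-\Delta\tilde u_n=\frac{s_\ep^2}{\ep^2}\,p\,(U-a)_+^{p-1}\tilde u_n+o(1)\;\longrightarrow\;-\Delta\tilde u-p\,w_+^{p-1}\tilde u=0,
\]
with $\tilde u$ bounded. The nondegeneracy fact \eqref{eq4-5} gives $\tilde u\in\mathrm{span}\{\partial_{z_1}w,\partial_{z_2}w\}$, and the orthogonality built into $E_{\ep,\mathbf x}$ (namely $\int Z_{\ep,\mathbf x,j,h}u_n=0$) forces $\tilde u=0$. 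So $u_n\to0$ uniformly on each $B_{Rs_\ep}(x_{n,j})$. Outside these balls the source term is $o(1)$ in $L^1$: the multiplier terms and the $\zeta$-coupling are small, and the remaining piece is $p(U-a)_+^{p-1}u_n$ concentrated on $B_{s_\ep}$, where $u_n$ is already $o(1)$. The Green representation $u_n(y)=\int G_p(z,y)\,\ep_n^{-2}(\cdots)\,dz$ then gives $\|u_n\|_{L^\infty}\to0$, since the logarithmic singularity of $G_p$ is integrable against data of mass $o(1)$ and bounded density scale $O(|\ln\ep|^{-(p-1)}\ep^{-2})$ on $B_{s_\ep}$; this is exactly where the $|\ln\ep|^{p-1}$ factor comes in. Combining with Step 1 contradicts the normalization, and the $C^{2,\beta}$ norms away from the vortices follow from Schauder estimates.

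\textbf{Main obstacle.} The difficult step is the circularity between $u$ and $\zeta$. The bound on $\zeta$ uses $\partial_{y_2}u$ at the top, while the $u$-equation contains $\zeta$ through $\psi_{02}(\zeta)$ and $\partial_{y_2}\zeta$. I would close this loop with the smallness factor $|\ln\ep|^{-(p-1)}$ carried by $f_j,f_j'$ on the vortex cores. I would also check carefully that the Appendix~A constant depends only on $\gamma$ and $L$, not on $\ep$.
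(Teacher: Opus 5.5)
There is a genuine gap in Step~2: the claim that the $\zeta$-coupling terms ``act like an $f$ of admissible size'' is false.

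The term $2f_j(\cdot)\partial_{y_2}\zeta_n$ is indeed harmless, since $f_j(\cdot)=O(|\ln\ep|^{-p})$ on the cores. But $f_j'(\cdot)\psi_{02}(\zeta_n)$ has size $|\ln\ep|^{-(p-1)}\|\zeta_n\|$, which is exactly the critical scale. The inverse loses $|\ln\ep|^{p-1}$, so this term contributes $O(\|\zeta_n\|)$, not $o(1)$. In the blow-up it survives:
\[
\frac{s_{\ep,j}^2}{\ep^2}\,f_j'(\cdot)\,\psi_{02}(\zeta_n)(x_{n,j}+s_{\ep,j}z)\longrightarrow p\,w_+^{p-1}\,c_{0,j},\qquad c_{0,j}:=\psi_{02}(\zeta)(x_{0,j}),
\]
where $\zeta$ is a $C^2$-limit of $\zeta_n$. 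This is of the same order as $p\,w_+^{p-1}\tilde u$. The correct limit equation is therefore $-\Delta\tilde u-p\,w_+^{p-1}(\tilde u+c_{0,j})=0$. Since $\int_{B_1}\phi^{p-1}\partial_h\phi=0$, the orthogonality in $E_{\ep,\mathbf x}$ only gives $\tilde u\equiv-c_{0,j}$, not $\tilde u=0$: constants are not excluded.

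For the same reason, your plan to close the $u$--$\zeta$ loop with the factor $|\ln\ep|^{-(p-1)}$ cannot work. It only yields $\|u_n\|\le C\|\zeta_n\|+o(1)$ and $\|\zeta_n\|\le C\|u_n\|+o(1)$, and nothing makes the product of the two constants smaller than $1$.

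The missing step, which is what the paper does, is to kill $c_{0,j}$ through the boundary equation, and the order matters.
\begin{enumerate}
\item From the blow-up you only learn that $u_n+\psi_{02}(\zeta_n)=o(1)$ on $B_{Rs_{\ep,j}}(x_{n,j})$.
\item That is enough to make the mass $\ep_n^{-2}\int|f_j'(\cdot)(u_n+\psi_{02}(\zeta_n))|$ equal to $o(1)$. The $f_n$, $\partial_{y_2}\zeta_n$ and multiplier pieces are also $o(1)$ in mass.
\item The Green representation then gives $u_n=o(1)$ outside $\cup_jB_\delta(x_{n,j})$. By harmonicity, $u_n\to0$ in $C^{2,\beta}$ near $\{y_2=\pi\}$.
\item Now apply your Step~1 in its \emph{localized} form, where $\zeta_n$ is controlled by $u_n$ near the top rather than by $\|u_n\|_{L^\infty(D_L)}$. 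This gives $\zeta_n\to0$, hence $c_{0,j}=0$, hence $u_n=o(1)$ on the cores.
\item A maximum principle on the annuli $B_\delta(x_{n,j})\setminus B_{Rs_{\ep,j}}(x_{n,j})$, where $u_n$ is harmonic, finishes the proof.
\end{enumerate}
This last step is also missing in your write-up. Green's function with mass $o(1)$ only gives $o(1)\cdot\ln(1/r)$ at distance $r$ from a core, which is not $o(1)$ when $r\sim s_\ep$. So a direct sup bound from the Green representation does not suffice.
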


\begin{proof}
We argue by contradiction. Suppose that there are $\varepsilon_n\to 0$, $\mathbf{x}_n\in \prod_{j=1}^k B_\delta(x_{0,j})$, $(u_n,\zeta_n)$  and $(f_n, f_{1n})$, 
such that  \eqref{1-16-1}
and \eqref{6-31-8} holds, $u_n\in E_{\varepsilon_n,\mathbf{x}_n}$,   $f_n:= Q_{\varepsilon_n,\mathbf{x}_n}L_{\varepsilon_n,\mathbf{x}_n}u_n=0$ in $D_L\setminus \cup_{j=1}^k B_{M s_{\varepsilon_n,j}}(x_{n,j})$, 
\begin{equation}\label{eq4-17}
\|f_n
\|_{L^\infty(D_L)}\leq \frac{1}{n|\ln\ep_n|^{p-1}},\;\;\|f_1\|_{C^{1,\beta}(0,L)}\le \frac1n,
\end{equation}
and
\begin{equation}\label{eq4-18}
\|u_n\|_{L^\infty(D_L)}+\|u_n\|_{L^\infty(D_L\setminus \cup_{j=1}^k B_{\delta}(x_j))}
+\|
\zeta_n\|_{C^{2,\beta}(D_L)}=1.
\end{equation}

First, we estimate $b_{n,jh}$ in the following formula:
\begin{equation}\label{eq4-19}
Q_{\varepsilon_n,\mathbf{x}_n}L_{\varepsilon_n,\mathbf{x}_n}(u_n,\zeta_n)=
L_{\varepsilon_n,\mathbf{x}_n}(u_n,\zeta_n)-\sum\limits_{j=1}^k\sum\limits_{h=1}^2
\varepsilon_n^2b_{n,jh}Z_{\varepsilon_n,\mathbf{x}_n,j,h}.
\end{equation}
We know that $b_{n,j1}$ and $b_{n,j2}$ satisfy the following equations

\begin{equation}\label{eq4-20}
\int_{D_L}\,V_{\varepsilon_n,\mathbf{x}_n,i,l}
L_{\varepsilon_n,\mathbf{x}_n}(u_n,\zeta_n)=
\sum\limits_{j=1}^k\sum\limits_{h=1}^2\varepsilon_n^2b_{n,jh}
\int_{D_L}\,Z_{\varepsilon_n,\mathbf{x}_n,j,h}V_{\varepsilon_n,\mathbf{x}_n,i,l}.
\end{equation}
On the other hand, by \eqref{eq3-10} and \eqref{eq3-14}, 

\begin{equation}\label{eq4-22}
\begin{split}
&\int_{D_L}\,V_{\varepsilon_n,\mathbf{x}_n,i,l}
L_{\varepsilon_n,\mathbf{x}_n}(u_n,\zeta_n)
\\
=&\int_{D_L}\,p 
(U_{\varepsilon_n,x_{n,i},a_{\varepsilon_n,i}}-a_{\varepsilon_n,i})_+^{p-1} \bigl(\frac{\partial U_{\varepsilon_n,x_{n,i},a_{\varepsilon_n,i}}}{\partial x_{n,il}}-\frac{\partial a_{\varepsilon_n,i}}{\partial x_{n,il}}\bigr)u_n
\\
&-\sum\limits_{j=1}^k1_{B_\delta(x_{0,j})}f_j'(
\mathcal{U}_{\varepsilon_n,\mathbf{x}_n,\mathbf{a}_n}+\tilde  \psi)\bigl(u_n+\psi_{02}(\zeta_n)
\bigr)V_{\varepsilon_n,x_{n,i},l}
\\
&-\int_{D_L}\,\sum\limits_{j=1}^k1_{B_\delta(x_{0,j})}2f_j(
\mathcal{U}_{\varepsilon_n,\mathbf{x}_n,\mathbf{a}_n}+\tilde  \psi)
\frac{\partial \zeta}{\partial y_2}V_{\varepsilon_n,x_{n,i},l}
\\
=&O\Bigl( \frac{ s_{\ep_n, i} }{ |\ln s_{\ep_n, i} |^{p}}\Bigr).
\end{split}
\end{equation}

Combining \eqref{eq4-20}, \eqref{eq4-22} and \eqref{eq4-13}, we obtain

\[
b_{n,jh}= O\bigl( \frac{s_{\ep_n, i}}{|\ln s_{\ep_n, i}|} \bigr),
\]
which gives

\begin{equation}\label{nnneq4-19}
\begin{split}
\varepsilon_n^2b_{n,jh}Z_{\varepsilon_n,\mathbf{x}_n,j,h}=&
b_{n,jh}p
(U_{\varepsilon_n,x_{n,i},a_{\varepsilon_n,i}}-a_{\varepsilon_n,i})_+^{p-1} \bigl(\frac{\partial U_{\varepsilon_n,x_{n,i},a_{\varepsilon_n,i}}}{\partial x_{n,il}}-\frac{\partial a_{\varepsilon_n,i}}{\partial x_{n,il}}\bigr)\\
=&O\bigl(\frac{|b_{n,jh}|}{ s_{\ep_n, i} |\ln s_{\ep_n, i}|^{p-1} }\bigr)
=O\bigl(\frac{1}{  |\ln s_{\ep_n, i}|^{p} }\bigr).
\end{split}
\end{equation}

From \eqref{eq4-17}, \eqref{eq4-19} and \eqref{nnneq4-19}, we find

\begin{equation}\label{10-17-1}
L_{\varepsilon_n,\mathbf{x}_n}(u_n,\zeta_n)=o\bigl(\frac1{|\ln \ep_n|^{p-1}}\bigr).
\end{equation}

 For any fixed $j$, let $\tilde{u}_{n,j}(y)=u_n(x_{n,j}+s_{\varepsilon_n,j}y)$, and $S_n=\{y:\,x_{n,j}+s_{\varepsilon_n,j}y\in S\}$
 for any set $S$. We define

 \begin{equation}\label{20-17-1}
 \begin{split}
\tilde L_{n}(v,\zeta)=&-\Delta v-\frac{ s^2_{\varepsilon_n,j} }{\ep_n^2}\sum\limits_{j=1}^k  1_{(B_\delta(x_{0,j}))_n} f'_j(\mathcal{U}_{\varepsilon_n,\mathbf{x}_n,\mathbf{a}_n}(x_{n,j}+s_{\varepsilon_n,j}y)+\tilde\psi
(x_{n,j}+s_{\varepsilon_n,j}y))v\\
&-\frac{ s^2_{\varepsilon_n,j} }{\ep_n^2}\sum\limits_{j=1}^k  1_{(B_\delta(x_{0,j}))_n} f'_j(\mathcal{U}_{\varepsilon_n,\mathbf{x}_n,\mathbf{a}_n}(x_{n,j}+s_{\varepsilon_n,j}y)+\tilde\psi
(x_{n,j}+s_{\varepsilon_n,j}y))\psi_{02}(\zeta_n)(x_{n,j}+s_{\varepsilon_n,j}y)\\
&-\frac{ s^2_{\varepsilon_n,j} }{\ep_n^2}\sum\limits_{j=1}^k  1_{(B_\delta(x_{0,j}))_n} 2f_j(\mathcal{U}_{\varepsilon_n,\mathbf{x}_n,\mathbf{a}_n}(x_{n,j}+s_{\varepsilon_n,j}y)+\tilde\psi
(x_{n,j}+s_{\varepsilon_n,j}y))\frac{\partial \zeta}{\partial y_2}(x_{n,j}+s_{\varepsilon_n,j}y)
\end{split}
\end{equation}
We then have

\begin{equation}\label{21-17-1}
\frac{ s^2_{\varepsilon_n,j} }{\ep_n^2}\|\tilde L_{n}(\tilde{u}_{n,j},\tilde \zeta_{n,j})\|_{L^\infty((D_L)_n)}=\| L_{n} (u_{n},\zeta_n)\|_{L^\infty(D_L)}.
\end{equation}
Noting that  $\frac{ s^2_{\varepsilon_n,j} }{\ep_n^2}\sim |\ln \ep_n|^{p-1} $,
we find that

\begin{equation}\label{22-17-1}
\|\tilde L_{n}(\tilde{u}_{n,j},\tilde \zeta_{n,j})\|_{L^\infty((D_L)_n)}=o(1).
\end{equation}
Since $\|\tilde{u}_{n,j}\|_{L^\infty((D_L)_n)}=\|u_n\|_{L^\infty(D_L)}\leq 1$, from \eqref{22-17-1}, we can assume that $\tilde{u}_{n,j}$ converges uniformly in any compact set of $\mathbb{R}^2$ to $u\in L^\infty(\mathbb{R}^2)\cap C(\mathbb{R}^2)$,
and $\zeta_n\to \zeta$ in $C^2(D_L)$. Thus, in view of $\Gamma_0\to 0$ as $\ep\to 0$, we have

\[
 \psi_{02}(\zeta_n)(x_{n,j}+s_{\varepsilon_n,j}y) \to \psi_{02}(\zeta)(x_{j}):=c_0,
  \]
  where $c_0$ is a constant.
  Moreover, from
  \[
  \frac{ s^2_{\varepsilon_n,j} }{\ep_n^2} f_j(\mathcal{U}_{\varepsilon_n,\mathbf{x}_n,\mathbf{a}_n}(x_{n,j}+s_{\varepsilon_n,j}y)+\tilde\psi
(x_{n,j}+s_{\varepsilon_n,j}y))\to 0,
\]
 we see that 
   $u$ satisfies \begin{equation}\label{eq4-28}
-\Delta u-p\phi^{p-1}_+ u-p\phi^{p-1}_+c_0=0, \quad \text{in}\,\, \mathbb{R}^2.
\end{equation}
That is, 

\[
-\Delta (u+c_0)-p\phi^{p-1}_+ u-p\phi^{p-1}_+c_0=0, \quad \text{in}\,\, \mathbb{R}^2.
\]
So, we get
\begin{equation}\label{eq4-29}
u+c_0=c_1\frac{\partial w}{\partial y_1}+c_2\frac{\partial w}{\partial y_2}.
\end{equation}
On the other hand, tt follows from $u_n\in E_{\varepsilon_n,\mathbf{x}_n}$ that
\begin{equation}\label{eq4-30}
\int_{B_1(0)}\, \phi^{p-1}\frac{\partial\phi}{\partial y_h}u=0,\quad h=1,2.
\end{equation}
We also have

\[
\int_{B_1(0)}\, \phi^{p-1}\frac{\partial\phi}{\partial y_h}c_0=0,\quad h=1,2.
\]
 This gives $c_1=c_2=0$. That is, $u+c_0\equiv0$. 
 
 On the other hand, from \eqref{6-31-8}, together with $\frac{\partial u}{\partial y_2}=0$,
 we obtain $\zeta=0$. This gives $c_0=0$.

 Thus, we have proved
\begin{equation}\label{eq4-31}
u_n=o(1),\quad \text{in}\,\, \cup_{j=1}^kB_{M's_{\varepsilon_n,j}}(x_{n,j}),
\end{equation}
for any $M'>0$.

By \eqref{eq4-19} and the assumption that
$ Q_{\varepsilon_n,\mathbf{x}_n}L_{\varepsilon_n,\mathbf{x}_n}u_n=0$ in $D_L\setminus \cup_{j=1}^k B_{Ms_{\varepsilon_n,j}}(x_{n,j})$,  we obtain
that

\[
\Delta u_n=0,\quad \text{in}\; D_L\setminus \cup_{j=1}^k B_{Ms_{\varepsilon_n,j}}(x_{n,j}).
\]

Let

\[
\begin{split}
F_n =&\frac1{\ep_n}\Bigl( f_n + \sum_{j=1}^k  1_{B_\delta(x_{0,j})}  f_j'(
\mathcal{U}_{\varepsilon_n,\mathbf{x}_n,\mathbf{a}_n}+\tilde \psi)(u_n+\psi_{02}(\zeta_n)
)\\
&+2
f_j(\mathcal{U}_{\varepsilon,\mathbf{x},\mathbf{a}}+\tilde \psi)\frac{\partial \zeta_n}{\partial y_2}
+\sum\limits_{j=1}^k\sum\limits_{h=1}^2
\varepsilon_n^2b_{n,jh}Z_{\varepsilon_n,\mathbf{x}_n,j,h} \Bigr).
\end{split}
\]
Then for $y\in D_L\setminus \cup_{j=1}^k B_{\delta}(x_{n,j})$, it holds

\begin{equation}\label{1-10-5}
\begin{split}
u_n(y)=&\int_{D_L} G_p(z, y) F_n(z)\,dz=\int_{\cup_{j=1}^k B_{Ms_{\varepsilon_n,j}}(x_{n,j})} G_p(z, y) F_n(z)\,dz\\
=& O\Bigl( \int_{\cup_{j=1}^k B_{Ms_{\varepsilon_n,j}}(x_{n,j})} | F_n(z)|\,dz \Bigr).
\end{split}
\end{equation}
But in view of \eqref{eq4-17},

\begin{equation}\label{2-10-5}
\begin{split}
& \int_{ B_{Ms_{\varepsilon_n,j}}(x_{n,j})} |f_n|\le \frac{C s_{\varepsilon_n,j}^2\ep_n^{-2}}{n|\ln\ep_n|^{p-1}}
=o(1).
\end{split}
\end{equation}
We can use \eqref{eq4-31} to derive

\begin{equation}\label{3-10-5}
\begin{split}
& \ep_n^{-2}
 \int_{ B_{Ms_{\varepsilon_n,j}}(x_{n,j})} | f_j'(
\mathcal{U}_{\varepsilon_n,\mathbf{x}_n,\mathbf{a}_n}+\tilde \psi) u_n|\,dz
 \\
 =& o(1)\ep_n^{-2}\int_{ B_{Ms_{\varepsilon_n,j}}(x_{n,j})} | f_j'(
\mathcal{U}_{\varepsilon_n,\mathbf{x}_n,\mathbf{a}_n}+\tilde \psi) |\,dz\\
=&o(1)\ep_n^{-2}\int_{ B_{Ms_{\varepsilon_n,j}}(x_{n,j})}  (U_{\varepsilon_n,x_{n,i},a_{\varepsilon_n,i}}-a_{\varepsilon_n,i})_+^{p-1}\,dz\\
=& o(1)\ep_n^{-2}\int_{ B_{Ms_{\varepsilon_n,j}}(x_{n,j})}  (U_{\varepsilon_n,x_{n,i},a_{\varepsilon_n,i}}-a_{\varepsilon_n,i})_+^{p-1}\,dz\\
=&
o(1) s_{\varepsilon_n,j}^2\ep_n^{-2}  \bigl((\frac{\ep_n}{ s_{\varepsilon_n,j}})^{\frac2{p-1}}\bigr)^{p-1} =o(1).
\end{split}
\end{equation}
It is also easy to verify that

\begin{equation}\label{4-10-5}
\begin{split}
&
|b_{n,jh}| \int_{ B_{Ms_{\varepsilon_n,j}}(x_{n,j})}|
Z_{\varepsilon_n,\mathbf{x}_n,j,h}|\\
=&|b_{n,jh}|O\Bigl( \int_{ B_{Ms_{\varepsilon_n,j}}(x_{n,j})}
\bigl|  (U_{\varepsilon,x_j,a_{\varepsilon,j}}-a_{\varepsilon,j})_+^{p-1}(\frac{\partial U_{\varepsilon,x_j,a_{\varepsilon,j}}}{\partial x_{jh}}-\frac{\partial a_{\varepsilon,j}}{\partial x_{jh}})\bigr|\Bigr)\\
=& |b_{n,jh}|  s_{\varepsilon_n,j}  \bigl((\frac{\ep_n}{ s_{\varepsilon_n,j}})^{\frac2{p-1}}\bigr)^{p-1}
=o(1).
\end{split}
\end{equation}

Combining \eqref{1-10-5}--\eqref{4-10-5}, we conclude that

\begin{equation}\label{5-10-5}
\begin{split}
u_n(y)=o(1), \quad y\in D_L\setminus \cup_{j=1}^k B_{\delta}(x_{n,j}).
\end{split}
\end{equation}

 On the other hand, by the maximum principle, we have

\[
|u_n(y)|\le \max_{y\in \partial [\cup_{j=1}^k [B_{\delta}(x_{n,j})\setminus   B_{Ls_{\varepsilon_n,j}}(x_{n,j})] }  |u_n(y)|=o(1),  \quad \forall\; y\in \cup_{j=1}^k [B_{\delta}(x_{n,j})\setminus   B_{Ls_{\varepsilon_n,j}}(x_{n,j}) ].
\]
So we have prove that $\|u_n\|_{L^\infty(D_L)}=o(1)$.
Moreover, since $u_n$ is harmonic in $D_L\setminus \cup_{j=1}^k B_{Ls_{\varepsilon_n,j}}(x_{n,j})$, we have

\[
\|u_n\|_{C^{2,\beta}(D_L\setminus \cup_{j=1}^k B_{\delta}(x_j))}\le C
\|u_n\|_{L^\infty(D_L)}=o(1).
\]
Furthermore, 

\[
\|\zeta_n\|_{C^{2,\beta}(D_L)}\le C\|u_n\|_{C^{2,\beta}(0,L)}+C\|f_{1n}\|_{C^{1,\beta}(0,L)}=o(1).
\]
So we obtain a contradiction to \eqref{eq4-18}.

\end{proof}

It follows from Proposition~\ref{pro4-2}   that the solution for \eqref{eq4-1} and \eqref{6-31-8}
is unique.  Next, we prove that \eqref{eq4-1} and \eqref{6-31-8} is solvable. 

\begin{proposition}\label{pro1-10-5}
Problem \eqref{eq4-1}-\eqref{6-31-8} is uniquely solvable.
\end{proposition}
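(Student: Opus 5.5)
Since Proposition~\ref{pro4-2} already gives the a priori estimate, and hence uniqueness, for the coupled system \eqref{eq4-1}, \eqref{6-31-8} projected by $Q_{\varepsilon,\mathbf{x}}$, the plan is to show existence by writing the system as a compact perturbation of an invertible operator and applying the Fredholm alternative.

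\textbf{Step 1: invert the boundary operator.} By the nonresonance condition $\lambda_n\ne0$ for all $n\ge1$ and the Schauder theory of Appendix~A, the mixed nonlocal problem \eqref{23-24-8} has a bounded solution operator $\mathcal T$. It maps zero-mean $C^{1,\beta}(0,L)$ data to harmonic $\zeta\in C^{2,\beta}(\overline{D_L})$ that are periodic in $y_1$ and vanish on $y_2=0$. On each Fourier mode the operator acts as multiplication by $\lambda_n$, and $\lambda_n$ behaves like $-\frac{\pi^2\gamma^2}{4}k_n$ for large $n$, so $\mathcal T$ gains one derivative. Solving \eqref{6-31-8} then reads
\[
\zeta=\mathcal T\Bigl(f_1+\frac{\pi\gamma}{2}\partial_{y_2}\omega(\cdot,\pi)-\frac{\pi\gamma}{2L}\int_0^L\partial_{y_2}\omega(\cdot,\pi)\Bigr).
\]
The trace $\partial_{y_2}\omega(\cdot,\pi)$ is controlled in $C^{1,\beta}$ by $\|\omega\|_{L^\infty}$, because $\omega$ is harmonic near $y_2=\pi$ up to $Q$-corrections supported in the balls. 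Hence $\zeta$ depends linearly and boundedly on $\omega$, and this dependence is compact from $L^\infty$ into $C^{2,\beta}$, since derivatives of harmonic functions away from the supports are compact.

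\textbf{Step 2: reduce to a single equation for $\omega$.} Substitute $\zeta=\zeta(\omega,f_1)$ into \eqref{eq4-1}. Write $\omega=(-\varepsilon^2\Delta)^{-1}_{p}g$, with the periodic Dirichlet Green function $G_p$, to get an equation of the form
\[
\omega-\mathcal K_\varepsilon\omega=h\quad\text{in }E_{\varepsilon,\mathbf{x}}.
\]
Here $\mathcal K_\varepsilon\omega$ is $(-\varepsilon^2\Delta)^{-1}_p Q_{\varepsilon,\mathbf{x}}$ applied to the terms $f_j'(\cdot)(\omega+\psi_{02}(\zeta))$ and $2f_j(\cdot)\partial_{y_2}\zeta$. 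These coefficients are bounded and supported in the small balls $B_{s_{\varepsilon,j}}(x_j)$, so for fixed $\varepsilon$ the operator $\mathcal K_\varepsilon$ is compact on $L^\infty(D_L)$: it maps into $W^{2,q}$, which embeds compactly in $C^0$.

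To keep the orthogonality constraints, follow the standard device used for $E_{\varepsilon,\mathbf{x}}$ and $F_{\varepsilon,\mathbf{x}}$. Note that $-\varepsilon^2\Delta$ maps $E_{\varepsilon,\mathbf{x}}$ onto $F_{\varepsilon,\mathbf{x}}$, by the duality $\int Z_{\varepsilon,\mathbf{x},j,h}\omega=\int \Delta V_{\varepsilon,\mathbf{x},j,h}\,\omega=\int V_{\varepsilon,\mathbf{x},j,h}\Delta\omega$ together with periodicity and the boundary conditions. Composing with $Q_{\varepsilon,\mathbf{x}}$ therefore keeps $\mathcal K_\varepsilon$ acting from $E_{\varepsilon,\mathbf{x}}$ to itself.

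\textbf{Step 3: apply the Fredholm alternative.} The operator $I-\mathcal K_\varepsilon$ is Fredholm of index zero on the closed subspace $E_{\varepsilon,\mathbf{x}}$, taken in the $L^\infty$ topology. Proposition~\ref{pro4-2}, with $f=0$ and $f_1=0$, shows that its kernel is trivial, so it is surjective. This yields $(\omega,\zeta)$ solving \eqref{1-16-1} and \eqref{6-31-8} for every $f\in F_{\varepsilon,\mathbf{x}}$ and every $f_1$. Uniqueness again comes from Proposition~\ref{pro4-2}.

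\textbf{Main obstacle.} The delicate point is the compactness and well-posedness of the map $\omega\mapsto\zeta$. The boundary data involve $\partial_{y_2}\omega$ on $y_2=\pi$, and the boundary operator is of Dirichlet–Neumann type, not coercive. One must check that $\mathcal T$ is bounded from $C^{1,\beta}$ to $C^{2,\beta}$, using the nonresonance condition and the asymptotics $\lambda_n\sim -ck_n$, which should follow from Appendix~A. One must also check that $\partial_{y_2}\omega(\cdot,\pi)$ is a compact function of $\omega$; this holds because $\omega$ is harmonic in a fixed neighbourhood of the top boundary. I would first try to establish these two facts by a Fourier-series argument on the strip and interior harmonic estimates, and only then run the Fredholm argument.

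Proposition~\ref{pro4-2} is stated for $f$ supported in $\cup_j B_{Ms_{\varepsilon,j}}(x_j)$. For general $f$, one either repeats its proof, since only the support of the nonlinear terms is used, or restricts to such $f$ as needed in the reduction.
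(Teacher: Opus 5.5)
Your proposal is correct and follows the paper's proof. You solve \eqref{6-31-8} for $\zeta=\zeta(\omega,f_1)$, substitute into \eqref{1-16-1} to get an identity-minus-compact equation for $\omega$ on $E_{\varepsilon,\mathbf{x}}$ (using that $(-\Delta)^{-1}$ maps $F_{\varepsilon,\mathbf{x}}$ into $E_{\varepsilon,\mathbf{x}}$), and conclude by the Fredholm alternative with injectivity from Proposition~\ref{pro4-2}. The only real difference is the topology: the paper claims compactness in $C^{2,\beta}$ by asserting the source is bounded in $C^{1,\beta}$, whereas your $L^\infty$ setting, restricted to functions harmonic near $\{y_2=\pi\}$, needs only boundedness of $f_j'(\mathcal{U}_{\varepsilon,\mathbf{x},\mathbf{a}}+\tilde\psi)$ and boundary regularity of harmonic functions for the trace $\partial_{y_2}\omega(\cdot,\pi)$, so it is somewhat more robust.
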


\begin{proof}

First, given any $\omega \in E_{\varepsilon,\mathbf{x} }  $, 
\eqref{6-31-8} has a unique solution $\zeta=\zeta(\omega, f_1)$.
Then, we insert $\zeta=\zeta(\omega, f_1)$ into \eqref{eq4-1}.
Note that if $f\in F_{\varepsilon,\mathbf{x} } $, then solution $u$ of $-\Delta u= f$ satisfies $u\in E_{\varepsilon,\mathbf{x} }  $.
Now, we rewrite \eqref{1-16-1} as

\[
\begin{split}
 \omega-\ep^{-2}(-\Delta )^{-1} Q_{\varepsilon,\mathbf{x}}\bigl[\sum\limits_{j=1}^k  1_{B_\delta(x_{0,j})}  \Bigl[&f'_j(\mathcal{U}_{\varepsilon,\mathbf{x},\mathbf{a}}+\tilde \psi)(\omega+\psi_{02}(\zeta)
)\\
&+2
f_j(\mathcal{U}_{\varepsilon,\mathbf{x},\mathbf{a}}+\tilde \psi)\frac{\partial \zeta}{\partial y_2}
\Bigr]\bigr]=(-\Delta)^{-1} f.
 \end{split}
\]

Let

\[
\begin{split}
\hat T \omega=(-\Delta )^{-1} Q_{\varepsilon,\mathbf{x}}\bigl[\sum\limits_{j=1}^k  1_{B_\delta(x_{0,j})} \Bigl[&f'_j(\mathcal{U}_{\varepsilon,\mathbf{x},\mathbf{a}}+\tilde \psi)(\omega+\psi_{02}(\zeta)
)\\
&+2
f_j(\mathcal{U}_{\varepsilon,\mathbf{x},\mathbf{a}}+\tilde \psi)\frac{\partial \zeta}{\partial y_2}
\Bigr]\bigr],\quad \omega\in
E_{\varepsilon,\mathbf{x} }.
\end{split}
\]

We claim that $\hat T$ is compact in $E_{\varepsilon,\mathbf{x} }$.
In fact, suppose that $\omega_n\in F_{\varepsilon,\mathbf{x} }$ and $\|\omega_n\|_{C^{2,\beta}(D_L)}\le C$. Denote  $h_n = \hat T\omega_n\in E_{\varepsilon,\mathbf{x} }$. Then

\begin{equation}\label{1-2-9}
\begin{split}
-\Delta h_n= \ep^{-2}\sum\limits_{j=1}^k  1_{B_\delta(x_{0,j})} \Bigl[&f'_j(\mathcal{U}_{\varepsilon,\mathbf{x},\mathbf{a}}+\tilde \psi)(\omega_n+\psi_{02}(\zeta_n)
)\\
&+2
f_j(\mathcal{U}_{\varepsilon,\mathbf{x},\mathbf{a}}+\tilde \psi)\frac{\partial \zeta_n}{\partial y_2}
\Bigr]
\end{split}
\end{equation}
Since $\|\omega_n\|_{C^{2,\beta}(D_L)}\le C$, we obtain that
$\|\zeta_n\|_{C^{2,\beta}(D_L)}\le C$. This shows that the function in the right hand side of
\eqref{1-2-9} is bounded in $C^{1,\beta}(D_L)$, which in turns implies that
$h_n$ is bounded in $C^{2,\beta_1}(D_L)$ for any $\beta_1$ close to 1.
As a result,  $h_n$ is compact in $C^{2,\beta}(D_L)$.

It follows from Proposition~\ref{pro4-2} that $I-\hat T$ is an injective
map. So Fredholm theorem gives that $I-\hat T$ is bijective.

\end{proof}

By Proposition~\ref{pro1-10-5}, given any $(f,f_1)$, we denote

\begin{equation}\label{10-2-9}
 T(f,f_1)= ( T_1(f,f_1), T_2(f,f_1))
\end{equation}
the solution of \eqref{eq4-1}-\eqref{6-31-8}.

\subsection{Case $\alpha>0$}

Let

\begin{equation}\label{0-4-9}
\begin{split}
\hat L_{\varepsilon,\mathbf{x}}(\omega,\zeta):=-\ep^2\Delta v-\sum\limits_{j=1}^k  1_{B_\delta(x_{0,j})} \Bigl[&f'_j(\mathcal{U}_{\varepsilon,\mathbf{x},\mathbf{a}}+\tilde \psi)\omega\\
&+2
f_j(\mathcal{U}_{\varepsilon,\mathbf{x},\mathbf{a}}+\tilde \psi)\frac{\partial \zeta}{\partial y_2}
\Bigr],\quad \omega\in E_{\varepsilon,\mathbf{x}}
\end{split}
\end{equation}
In \eqref{10-21-8},  taking $u=\mathcal{U}_{\varepsilon,\mathbf{x},\mathbf{a}}
+\omega$,  we find

\[
  \bigl(\frac{\partial u}{\partial y_2}\bigr)^2= \bigl(\frac{\partial \mathcal{U}_{\varepsilon,\mathbf{x},\mathbf{a}}}{\partial y_2}\bigr)^2+
  2\frac{\partial \mathcal{U}_{\varepsilon,\mathbf{x},\mathbf{a}}}{\partial y_2}\frac{\partial \omega}{\partial y_2}+\bigl(\frac{\partial \omega}{\partial y_2}\bigr)^2,
  \]
in which, the linear term for $\omega$  can be ignored since 
$\frac{\partial \mathcal{U}_{\varepsilon,\mathbf{x},\mathbf{a}}}{\partial y_2}=O\bigl(
\frac1{|\ln\ep|}$ on $y_2=\pi$.  Thus, the linear problem for $\alpha>0$ is 

\begin{equation}\label{1-4-9}
\begin{split}
Q_{\varepsilon,\mathbf{x}} \hat L_{\varepsilon,\mathbf{x}}(\omega,\zeta)=f,\quad \omega\in E_{\varepsilon,\mathbf{x}}
\end{split}
\end{equation}
and

\begin{equation}\label{2-4-9}
\begin{split}
-\alpha^2 \zeta'' +g\zeta= f_1,\quad \text{on}\; y_2=\pi.
\end{split}
\end{equation}
Here, in \eqref{1-4-9}, $\zeta$ is the  harmonic extension
of the solution of \eqref{2-4-9},  in $D_L$, satisfying $\zeta=0$ on $y_2=0$.

The linear problem \eqref{1-4-9}-\eqref{2-4-9} is simpler than that for
the case $\alpha=0$.  Similar to the last subsection, we can prove the following result.

\begin{proposition}\label{p1-4-9}
Problem \eqref{1-4-9}-\eqref{2-4-9} is uniquely solvable. Moreover,

\[
\|\omega\|_{L^\infty(D_L)}+\| \omega\|_{C^{2,\beta}(D_L\setminus \cup_{j=1}^k B_{\delta}(x_j))}+\|
\zeta\|_{C^{2,\beta}(D_L)}\leq C\bigl( |\ln\ep|^{p-1}\|f\|_{L^\infty(D_L)}+\|f_1\|_{C^{\beta}(0,L)}
\bigr).
\]
\end{proposition}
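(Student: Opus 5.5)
The system is only one-way coupled, so I will solve it in two steps: first the boundary equation \eqref{2-4-9}, then the interior problem \eqref{1-4-9} with $\zeta$ treated as given data. The estimate will follow from combining these with Proposition~\ref{pro4-2}.

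\textbf{Step 1: the boundary equation.} Since $\alpha>0$ and $g>0$, the operator $-\alpha^2\partial_{y_1}^2+g$ is coercive on $L$-periodic functions. Its Fourier multiplier is $\alpha^2k_n^2+g\ge g>0$, so \eqref{2-4-9} has a unique periodic solution. Periodic Schauder theory gives
\[
\|\zeta(\cdot,\pi)\|_{C^{2,\beta}(0,L)}\le C\|f_1\|_{C^\beta(0,L)}.
\]
I then extend $\zeta$ harmonically to $D_L$ with zero data on $y_2=0$ and periodic conditions in $y_1$. Boundary Schauder estimates for the Dirichlet problem on the periodic strip then give $\|\zeta\|_{C^{2,\beta}(D_L)}\le C\|f_1\|_{C^\beta(0,L)}$. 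In particular $\partial_{y_2}\zeta$ is bounded by $C\|f_1\|_{C^\beta}$ in $C^{1,\beta}$.

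\textbf{Step 2: the interior problem.} I move the term $-\sum_j 1_{B_\delta(x_{0,j})}2f_j(\mathcal U_{\varepsilon,\mathbf x,\mathbf a}+\tilde\psi)\partial_{y_2}\zeta$ to the right-hand side and project it with $Q_{\varepsilon,\mathbf x}$. This leaves the equation $Q_{\varepsilon,\mathbf x}\bigl[-\varepsilon^2\Delta\omega-\sum_j1_{B_\delta}f_j'\omega\bigr]=f+\tilde f$.

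To control $\tilde f$, I use that $f_j(\mathcal U+\tilde\psi)$ is supported in $B_{s_{\varepsilon,j}}(x_j)$. There it is of size $(\varepsilon/s_{\varepsilon,j})^{2p/(p-1)}\sim\varepsilon^2|\ln\varepsilon|^{-1}s_{\varepsilon,j}^{-2}\sim|\ln\varepsilon|^{-p}$. Hence $\|\tilde f\|_{L^\infty}\le C|\ln\varepsilon|^{-p}\|f_1\|_{C^\beta}$, and multiplying by $|\ln\varepsilon|^{p-1}$ even gains a factor $|\ln\varepsilon|^{-1}$. Since $\tilde f$ is supported in $\cup_jB_{s_{\varepsilon,j}}(x_j)$, Remark~\ref{re1-1-9} shows that $Q_{\varepsilon,\mathbf x}$ does not worsen this bound.

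The remaining operator is the one in Proposition~\ref{pro4-2} with $\gamma=0$ and $\zeta=0$ (no $\psi_{02}$ term and no boundary coupling). So I rerun the blow-up argument verbatim. After rescaling around each $x_j$, the limit solves $-\Delta u-p\phi_+^{p-1}u=0$, and the orthogonality conditions in $E_{\varepsilon,\mathbf x}$ kill the kernel $\partial_{y_h}w$. The Green representation with the bounds \eqref{2-10-5}--\eqref{4-10-5} gives smallness outside the $B_\delta(x_j)$, and the maximum principle gives smallness in the annuli. Together these yield
\[
\|\omega\|_{L^\infty}+\|\omega\|_{C^{2,\beta}(D_L\setminus\cup B_\delta)}\le C|\ln\varepsilon|^{p-1}\|f+\tilde f\|_{L^\infty}.
\]
This is actually simpler than Proposition~\ref{pro4-2}, because the constant $c_0$ from $\psi_{02}$ is absent.

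\textbf{Step 3: existence.} Injectivity comes from the a priori estimate. For solvability, I write the equation as $(I-\hat T)\omega=(-\Delta)^{-1}\varepsilon^{-2}(f+\tilde f)$ with $\hat T$ compact, exactly as in Proposition~\ref{pro1-10-5}, and conclude by the Fredholm alternative. Adding the bounds from Steps 1 and 2 gives the stated estimate.

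\textbf{Main obstacle.} The delicate point is the $L^\infty$ size of the coupling term $2f_j\partial_{y_2}\zeta$ relative to the weight $|\ln\varepsilon|^{p-1}$. One must check that $f_j(\mathcal U+\tilde\psi)$ is $O(|\ln\varepsilon|^{-p})$ via \eqref{eq3-10}, so that the coupling term does not dominate.
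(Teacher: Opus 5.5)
Your proposal is correct and is essentially the argument the paper intends when it says the proof is ``similar to the last subsection'': a blow-up contradiction argument in which the orthogonality conditions in $E_{\varepsilon,\mathbf{x}}$ remove the kernel $\partial_{y_h}w$, followed by the Fredholm alternative as in Proposition~\ref{pro1-10-5}. The only difference is that you use the one-way coupling to bound $\zeta$ first and treat $2f_j\,\partial_{y_2}\zeta=O(|\ln\varepsilon|^{-p}\|f_1\|_{C^\beta})$ as a forcing term, which shows directly why this term drops out of the rescaled limit; note also that, as in Proposition~\ref{pro4-2}, your Green-representation step needs $f$ itself (not only $\tilde f$) to vanish outside $\cup_j B_{Ms_{\varepsilon,j}}(x_j)$, a hypothesis the estimate cannot do without and which you should state explicitly.
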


By Proposition~\ref{p1-4-9}, given any $(f,f_1)$, we denote

\begin{equation}\label{11-4-9}
\hat  T(f,f_1)= ( \hat T_1(f,f_1), \hat T_2(f,f_1))
\end{equation}
the solution of \eqref{1-4-9}-\eqref{2-4-9}.

\section{The Reduction}

\subsection{ The case $\alpha=0$}
We will look for solution for \eqref{n10-24-8}-\eqref{21-24-8}  of the form
\[
u_\varepsilon=\mathcal{U}_{\varepsilon,\mathbf{x},\mathbf{a}}
+\omega_{\varepsilon,\mathbf{x}},\quad \Gamma_2=\Gamma_0+\zeta
\]
where 
 $\Gamma_0$ is the solution of
\eqref{8-29-8},  and $\omega_{\varepsilon,\mathbf{x}}\in E_{\varepsilon,\mathbf{x}}$.
We write \eqref{n10-24-8} as

  \begin{equation}\label{10-11-5}
L_{\varepsilon,\mathbf{x}}(\omega,\zeta)=l_{\varepsilon,\mathbf{x}}
+R_{\varepsilon,\mathbf{x}}(\omega, \zeta),
\end{equation}
where $L_{\varepsilon,\mathbf{x}}$ is defined in \eqref{eq4-1},

\begin{equation}\label{eq4-3}
\begin{split}
l_{\varepsilon,\mathbf{x}}=
\sum\limits_{j=1}^k1_{B_\delta(x_{0,j})}
f_j\big(\mathcal{U}_{\varepsilon,\mathbf{x},\mathbf{a}}+\tilde \psi\big)
+\ep^2\Delta U_{\varepsilon,x_j,a_{\varepsilon,j}}+ 2\frac{\partial \Gamma_0}{\partial y_2}\sum\limits_{j=1}^k1_{B_\delta(x_{0,j})}
f_j\big(\mathcal{U}_{\varepsilon,\mathbf{x},\mathbf{a}}+ \tilde \psi
\big)
\end{split}
\end{equation}

\begin{equation}\label{eq4-4}
\begin{split}
R_{\varepsilon,\mathbf{x}}(\omega, \zeta)=
&\sum\limits_{j=1}^k1_{B_\delta(x_{0,j})}\Big(
f_j\big(\mathcal{U}_{\varepsilon,\mathbf{x},\mathbf{a}}+ \psi_0
+\omega\big)
-f_j\big(\mathcal{U}_{\varepsilon,\mathbf{x},\mathbf{a}}+\tilde \psi\big)
\\
&-f'_j\big(\mathcal{U}_{\varepsilon,\mathbf{x},\mathbf{a}}+\tilde \psi
\big)(\omega+\psi_{02}(\zeta)
)\Big)\\
&+\bigl( 2\frac{\partial \zeta}{\partial y_2}+\bigl(\frac{\partial \Gamma_2}{\partial y_2}\bigr)^2 
+
\bigl(\frac{\partial \Gamma_2}{\partial y_1}\bigr)^2
\bigr)\sum\limits_{j=1}^k1_{B_\delta(x_{0,j})}
f_j\big(\mathcal{U}_{\varepsilon,\mathbf{x},\mathbf{a}}+ \psi_0
+\omega\big),
\end{split}
\end{equation}

 For the  free boundary equation \eqref{21-24-8}, we assume that
$\frac{4(g+\frac\pi2 \gamma^2 L)  }{\pi^2\gamma^2  }\ne \frac{j\pi}{L} \coth\frac{j\pi^2}{L}$, $j\ge 1$.
On $ \{y_2=\pi\}$, $\zeta$ satisfies

\begin{equation}\label{10-28-8}
\begin{split}
&-\frac{\pi^2\gamma^2}4 \frac{\partial \zeta}{\partial y_2}
+\frac{\pi^2\gamma^2}{4L} \int_{0}^L\frac{\partial \zeta}{\partial y_2}+\bigl(g+\frac\pi2 \gamma^2 L
  \bigr)\zeta-\bigl[\frac{\pi \gamma}2 \frac{\partial \omega}{\partial y_2}
  -\frac{\pi \gamma}2 \frac1L \int_{0}^L\frac{\partial \omega}{\partial y_2}\bigr]\\
  = &
\mathcal F(\mathcal{U}_{\varepsilon,\mathbf{x},\mathbf{a}}
+\omega,\Gamma_0+\zeta)-  \frac1L \int_{0}^L\mathcal F(\mathcal{U}_{\varepsilon,\mathbf{x},\mathbf{a}}
+\omega,\Gamma_0+\zeta),
  \end{split}
\end{equation}
where 

\begin{equation}\label{1-3-9}
\mathcal F(u,\Gamma)=  \frac12 \frac{1}{|1+\Gamma'|^2} \bigl(\frac{\partial (u+\psi_0)}{\partial y_2}\bigr)^2  -\Bigl( -\frac{\pi^2\gamma^2}4 \frac{\partial \Gamma_2}{\partial y_2}+\bigl(g+\frac\pi2 \gamma^2 L
  \bigr)\Gamma_2  \Bigr),
\end{equation}
which satisfies   \eqref{0-24-8}. See \eqref{62-24-8}.

We consider the following problem

\begin{equation}\label{1-31-8}
\begin{cases}
\Delta \zeta=0,\;\; \text{in}\; D_L,\\
\zeta=0, \;\; \text{on}\; \{y_2=0\} \\
-\frac{\pi^2\gamma^2}4 \frac{\partial \zeta}{\partial y_2}
+\frac{\pi^2\gamma^2}{4L} \int_{0}^L\frac{\partial \zeta}{\partial y_2}+\bigl(g+\frac\pi2 \gamma^2 L
  \bigr)\zeta
  -\bigl[\frac{\pi \gamma}2 \frac{\partial \omega}{\partial y_2}
  -\frac{\pi \gamma}2 \frac1L \int_{0}^L\frac{\partial \omega}{\partial y_2}\bigr]= \text{RHS of \eqref{10-28-8}}, \text{on}\; \{y_2=\pi\},\\
\text{ $\zeta$  is periodic in $y_1$ in $D_L$}.
\end{cases}
\end{equation}

We denote

\begin{equation}\label{0-3-9}
\begin{split}
&\mathbf T(\omega,\zeta)= (\mathbf T_1(\omega,\zeta), \mathbf T_2(\omega,\zeta))\\
\\
=&T\Bigl( Q_{\varepsilon,\mathbf{x}}
\bigl(l_{\varepsilon,\mathbf{x}}
+R_{\varepsilon,\mathbf{x}}(\omega,\zeta)\bigr),\; \mathcal F(\mathcal{U}_{\varepsilon,\mathbf{x},\mathbf{a}}
+\omega,\Gamma_0+\zeta)-  \frac1L \int_{0}^L\mathcal F(\mathcal{U}_{\varepsilon,\mathbf{x},\mathbf{a}}
+\omega,\Gamma_0+\zeta)\Bigr),
\end{split}
\end{equation}
where $T$ is defined in \eqref{10-2-9}.

We consider the following problem

\begin{equation}\label{1-24-1}
(\omega,\zeta)=\mathbf T(\omega,\zeta).
\end{equation}

\bigskip

\begin{lemma}\label{l2-12-5}
We have

\[
\|l_{\varepsilon,\mathbf{x}}\|_{L^\infty(D_L)}\le \frac{C }{|\ln \varepsilon|^{p+1}}
\]

\end{lemma}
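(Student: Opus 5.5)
The error $l_{\varepsilon,\mathbf{x}}$ splits into three pieces, which I will estimate separately. First, the support of each nonlinearity. Second, the difference $f_j(\mathcal{U}_{\varepsilon,\mathbf{x},\mathbf{a}}+\tilde\psi)-(-1)^{\tau_j}(U_{\varepsilon,x_j,a_{\varepsilon,j}}-a_{\varepsilon,j})_+^p$ inside $B_\delta(x_{0,j})$; here $\ep^2\Delta U_{\varepsilon,x_j,a_{\varepsilon,j}}$ is read with the sign conventions of \eqref{eq3-9}, i.e. as $-\sum_j(-1)^{\tau_j}(U_{\varepsilon,x_j,a_{\varepsilon,j}}-a_{\varepsilon,j})_+^p$. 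Third, the coupling term $2\partial_{y_2}\Gamma_0\sum_j 1_{B_\delta(x_{0,j})}f_j(\cdot)$.

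\textbf{Step 1: support.} By \eqref{eq3-10}, inside $B_{\delta}(x_j)$ we have $\mathcal{U}_{\varepsilon,\mathbf{x},\mathbf{a}}+\tilde\psi-\kappa_j = U_{\varepsilon,x_j,a_{\varepsilon,j}}-a_{\varepsilon,j}+O\bigl(s_{\varepsilon,j}/|\ln\ep|\bigr)$ near $x_j$. Away from $x_j$, we have $U_{\varepsilon,x_j,a_{\varepsilon,j}}-a_{\varepsilon,j}\le -c\,a_{\varepsilon,j}\ln(|y-x_j|/s_{\varepsilon,j})/|\ln s_{\varepsilon,j}|$. Hence $f_j(\mathcal{U}+\tilde\psi)$ vanishes outside $B_{Ms_{\varepsilon,j}}(x_j)$ for a fixed $M$, and the nonlinearity also vanishes near $\partial B_\delta(x_{0,j})$. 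So $l_{\varepsilon,\mathbf{x}}$ is supported in $\bigcup_j B_{Ms_{\varepsilon,j}}(x_j)$.

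\textbf{Step 2: main difference.} In $B_{Ms_{\varepsilon,j}}(x_j)$ I will apply the mean value theorem, using the bound $|(A+h)_+^p-A_+^p|\le p(|A|+|h|)^{p-1}|h|$. The size of the bubble is $(U_{\varepsilon,x_j,a_{\varepsilon,j}}-a_{\varepsilon,j})_+\le C(\ep/s_{\varepsilon,j})^{2/(p-1)}\sim C|\ln\ep|^{-1}$.

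The perturbation $h$ needs more care than \eqref{eq3-10} records. The constant terms cancel exactly by the choice \eqref{nneq3-10} of $a_{\varepsilon,j}$. What remains is the linear term of \eqref{eq3-13}, which is of size $|\nabla\tilde\psi(x_j)|\,s_{\varepsilon,j}+s_{\varepsilon,j}/|\ln\ep|$. Here $\partial_{y_2}\psi_{01}(x_j)=0$ because $x_{j2}=\pi/2$, and $\nabla\psi_{02}(\Gamma_0)=O(\|\Gamma_0\|_{C^1})=O(|\ln\ep|^{-1})$, by the boundary estimate for \eqref{8-29-8} with data $O(|\ln\ep|^{-1})$. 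The quadratic remainder is $O(s_{\varepsilon,j}^2)$. So in all cases $|h|\le Cs_{\varepsilon,j}/|\ln\ep|$.

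Since $s_{\varepsilon,j}=O(\ep|\ln\ep|^{(p-1)/2})$, this gives a difference of size $C|\ln\ep|^{-(p-1)}\,s_{\varepsilon,j}|\ln\ep|^{-1}$. That is far smaller than $|\ln\ep|^{-p-1}$, since $s_{\varepsilon,j}\ll|\ln\ep|^{-1}$.

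\textbf{Step 3: coupling term.} On the support we have $|f_j(\mathcal{U}+\tilde\psi)|\le C|\ln\ep|^{-p}$. For $\Gamma_0$, Appendix A (the Schauder theory for the nonlocal mixed problem under the nonresonance condition) gives $\|\Gamma_0\|_{C^{1,\beta}}\le C\|\partial_{y_2}\mathcal{U}_{\varepsilon,\mathbf{x},\hat{\mathbf a}}\|_{C^{\beta}(y_2=\pi)}$. By \eqref{12-8-5}, on $y_2=\pi$ the function $\mathcal{U}$ equals $-\sum_j(-1)^{\tau_j}\frac{2\pi a_j}{\ln s_{\varepsilon,j}}G_p(\cdot,x_j)$ up to exponentially small error, so this norm is $O(|\ln\ep|^{-1})$. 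The product is therefore $O(|\ln\ep|^{-p-1})$, which is the claimed rate. This term is the one that actually saturates the bound.

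\textbf{Main obstacle.} The delicate point is Step 2: one must confirm that the residual $O(s_{\varepsilon,j}/|\ln\ep|)$ in \eqref{eq3-10} is uniform. This requires that the $\tilde\psi$ used in $a_{\varepsilon,j}$ and the $\Gamma_0$ built with $\hat{\mathbf a}$ rather than $\mathbf a$ differ only by $O(|\ln\ep|^{-2})$ in $C^1$. I expect to get this from linearity of \eqref{8-29-8} in $\mathbf a$ and from $|a_{\varepsilon,j}-\hat a_j|=O(|\ln\ep|^{-1})$. Any such discrepancy only affects the constant part of $h$, contributing at most $|\ln\ep|^{-(p-1)}\cdot|\ln\ep|^{-2}$, which is still within the claimed bound.
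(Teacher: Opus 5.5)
Your proposal is correct and follows the paper's argument. As in the paper, the nonlinearity difference is bounded by $O(s_{\varepsilon,j}|\ln\varepsilon|^{-p})$ via \eqref{eq3-10} and the mean value theorem, and the coupling term $2\partial_{y_2}\Gamma_0\sum_j 1_{B_\delta(x_{0,j})}f_j$ is $O(|\ln\varepsilon|^{-1})\cdot O(|\ln\varepsilon|^{-p})$ and saturates the bound.

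One small slip: the lemma must hold for all $\mathbf{x}\in\prod_j B_\delta(x_{0,j})$, so $\partial_{y_2}\psi_{01}(x_j)$ need not vanish and you only get $|h|\le Cs_{\varepsilon,j}$. This still gives a difference of order $s_{\varepsilon,j}|\ln\varepsilon|^{-(p-1)}\ll|\ln\varepsilon|^{-p-1}$, so the conclusion is unaffected.
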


\begin{proof}
 Let $y\in  B_{ M s_{\varepsilon,j}  }(x_{0,j})$.  If $\tau_j=0$, then using \eqref{eq3-10},  we can prove that

\begin{equation}\label{eq4-42a}
\begin{split}
&\big(\mathcal{U}_{\varepsilon,\mathbf{x},\mathbf{a}}+\tilde \psi
-\kappa_j\big)_+^{p}-(U_{\varepsilon,x_j,a_{\varepsilon,j}}
-a_{\varepsilon,j})_+^p
\\
=&(U_{\varepsilon,x_j,a_{\varepsilon,j}}
-a_{\varepsilon,j}+O(\frac{s_{\varepsilon,j}}{|\ln \varepsilon|}))_+^p-(U_{\varepsilon,x_j,a_{\varepsilon,j}}-a_{\varepsilon,j})_+^p
\\
=&O\Big((U_{\varepsilon,x_j,a_{\varepsilon,j}}
-a_{\varepsilon,j})_+^{p-1}\frac{s_{\varepsilon,j}}{|\ln \varepsilon|}\Big)
\\
=&O\Big(\frac{s_{\varepsilon,j}}{|\ln \varepsilon|^p}\Big),
\end{split}
\end{equation}
while if $\tau_j=1$,

\begin{equation}\label{eq4-42a}
\begin{split}
&-\big(-(\mathcal{U}_{\varepsilon,\mathbf{x},\mathbf{a}}+\tilde \psi)
-\kappa_j\big)_+^{p}+(U_{\varepsilon,x_j,a_{\varepsilon,j}}
-a_{\varepsilon,j})_+^p
\\
=&-(U_{\varepsilon,x_j,a_{\varepsilon,j}}-a_{\varepsilon,j}+O(\frac{s_{\varepsilon,j}}{|\ln \varepsilon|}))_+^p+(U_{\varepsilon,x_j,a_{\varepsilon,j}}-a_{\varepsilon,j})_+^p
\\
=&O\Big((U_{\varepsilon,x_j,a_{\varepsilon,j}}-a_{\varepsilon,j})_+^{p-1}\frac{s_{\varepsilon,j}}{|\ln \varepsilon|}\Big)
\\
=&O\Big(\frac{s_{\varepsilon,j}}{|\ln \varepsilon|^p}\Big).
\end{split}
\end{equation}

We also have

\[
\frac{\partial \Gamma_0}{\partial y_2}\sum\limits_{j=1}^k1_{B_\delta(x_{0,j})}
f_j\big(\mathcal{U}_{\varepsilon,\mathbf{x},\mathbf{a}}+ \tilde \psi
\big)=O\Big(\frac{1}{|\ln \varepsilon|^{p+1}}\Big),
\]
and thus the result follows.

\end{proof}

To estimate $R_{\varepsilon,\mathbf{x}}$, we need to estimate
$\psi_0-\tilde \psi$. 

\begin{lemma}\label{l1-29-8}
We  have

\[
\psi_0=\tilde \psi +\psi_{02}(\zeta)+\psi_{03}, 
\]
where $\psi_{03}$ satisfies

\[
\|\psi_{03}\|_{L^\infty(D_L)}=O\Bigl( \frac{1}{|\ln s_\ep|^2} \Bigr).
\]
\end{lemma}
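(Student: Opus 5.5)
The plan is to define $\psi_{03}:=\psi_0-\tilde\psi-\psi_{02}(\zeta)$ and bound it with the maximum principle, using the linearity of $\Gamma_2\mapsto\psi_{02}(\Gamma_2)$. Since $\Gamma_2=\Gamma_0+\zeta$ and $\psi_{02}$ is linear in $\Gamma_2$ (by the explicit formula $\psi_{02}(\Gamma_2)=\gamma(\pi-y_2)\Gamma_2$ obtained after \eqref{3-15-6}), we have $\psi_{02}(\Gamma_0)+\psi_{02}(\zeta)=\psi_{02}(\Gamma_2)$. Hence
\[
\psi_{03}=\psi_0-\psi_{01}-\psi_{02}(\Gamma_2)=r,
\]
where $r$ is the remainder introduced before \eqref{4-15-6}. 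It solves $-\Delta r=\gamma|\Gamma'|^2$ in $D_L$, with $r=0$ on $y_2=0,\pi$ and periodicity in $y_1$. (I will take the definition of $\psi_{02}$ from \eqref{3-15-6}, with source $2\gamma\partial_{y_2}\Gamma_2$; \eqref{100-29-8} appears to contain a typo in the source term.)

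The maximum principle, applied in $D_L$ with the comparison function $\frac{\gamma}{2}\|\Gamma'\|^2_{L^\infty}\,y_2(\pi-y_2)$, then gives
\[
\|\psi_{03}\|_{L^\infty(D_L)}\le C|\gamma|\,\|\Gamma'\|_{L^\infty(D_L)}^2 .
\]
So the lemma reduces to showing $\|\Gamma'\|_{L^\infty(D_L)}=O(1/|\ln s_\ep|)$, that is, $\|\nabla\Gamma_2\|_{L^\infty}\le C/|\ln\ep|$, using \eqref{eq3-14}. I treat $\Gamma_0$ and $\zeta$ separately.

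For $\Gamma_0$, which solves \eqref{8-29-8}, I use the boundary estimate from Appendix A under the nonresonance condition:
\[
\|\Gamma_0\|_{C^{1,\beta}(\overline{D_L})}\le C\|f\|_{C^{\beta}(0,L)},\qquad f=\frac{\pi\gamma}{2}\Bigl(\partial_{y_2}\mathcal U_{\varepsilon,\mathbf x,\hat{\mathbf a}}-\frac1L\int_0^L\partial_{y_2}\mathcal U_{\varepsilon,\mathbf x,\hat{\mathbf a}}\Bigr).
\]
Near $y_2=\pi$, which stays a distance $\ge\delta$ from the vortices, \eqref{12-8-5} gives $\mathcal U=-\sum_j(-1)^{\tau_j}\frac{2\pi a_j}{\ln s_{\varepsilon,j}}G_p(\cdot,x_j)+O(s_\ep^2/|\ln\ep|)$, and smooth functions of $y$ there have all derivatives bounded. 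Hence $\|f\|_{C^{1,\beta}}\le C/|\ln\ep|$, and so $\|\nabla\Gamma_0\|_{L^\infty}\le C/|\ln\ep|$. Alternatively this follows directly from the Fourier multiplier $\lambda_n^{-1}$ with $|\lambda_n|^{-1}\lesssim (1+k_n)^{-1}$, which gains one derivative.

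For $\zeta$, I rely on the a priori bound from the fixed point \eqref{1-24-1} in the ball where $\|\zeta\|_{C^{2,\beta}}\le C/|\ln\ep|$ (or smaller). This is consistent because $\mathbf T_2$ is controlled through Proposition~\ref{pro4-2} by $|\ln\ep|^{p-1}\|Q(l+R)\|_{L^\infty}+\|\mathcal F-\overline{\mathcal F}\|_{C^{1,\beta}}$. Lemma~\ref{l2-12-5} makes the first term $O(|\ln\ep|^{-2})$, and \eqref{0-24-8} makes the second term quadratic, hence $O(|\ln\ep|^{-2})$.

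Combining the two pieces gives $\|\Gamma'\|_{L^\infty}\le C/|\ln\ep|$, and therefore $\|\psi_{03}\|_{L^\infty}=O(|\ln s_\ep|^{-2})$.

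The main obstacle is the circularity: the bound on $\zeta$ is needed inside the fixed-point argument that this lemma is used to set up. I would resolve it by stating the lemma for all $(\omega,\zeta)$ in the working ball $\{\|\zeta\|_{C^{2,\beta}}\le |\ln\ep|^{-1}\}$, which is the set on which the contraction is carried out.
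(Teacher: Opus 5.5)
Your proposal is correct and is essentially the paper's argument. The paper also computes $-\Delta(\psi_0-\tilde\psi)=2\gamma\partial_{y_2}\zeta+\gamma|\nabla\Gamma_2|^2$ (implicitly reading $\psi_{02}$ as in \eqref{3-15-6}), peels off $\psi_{02}(\zeta)$, and bounds the quadratic remainder using the smallness of $\Gamma_2$; you make explicit what it omits, namely the maximum-principle bound $\|\psi_{03}\|_{L^\infty}\le C|\gamma|\,\|\nabla\Gamma_2\|_{L^\infty}^2$, the fact that a bound on $\nabla\Gamma_2$ (not merely on $\Gamma_2$, as the paper writes) is what is needed, and its justification via Appendix~A for $\Gamma_0$ and the working ball $M_2$ for $\zeta$.
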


\begin{proof}
Let  $\psi=\psi_0-\tilde \psi$.  From \eqref{20-2-9} and \eqref{100-29-8}, we have

\[
-\Delta\psi= 2 \frac{\partial \zeta}{\partial y_2} \gamma +\Bigl( \bigl(\frac{\partial \Gamma_2}{\partial y_1}\bigr)^2  +\bigl(\frac{\partial \Gamma_2}{\partial y_2}\bigr)^2 \Bigr)\gamma
\]
Note that we have $|\Gamma_2|\le \frac{C}{|\ln s_\ep|}$. 
Thus, the result follows.
\end{proof}
\begin{lemma}\label{l3-12-5}
 We have

\begin{equation}\label{31-28-8}
\begin{split}
\|R_{\varepsilon,\mathbf{x}}(\omega,\zeta)\|_{L^\infty(D_L)}
\le &  \frac C{|\ln s_\ep|^{p+1} }+\frac{C\|\omega\|_{L^\infty(\Omega)}^{\min \{2,p\}}
}{ |\ln\varepsilon|^{\max\{0,p-2\}}}\\
&+C\Bigl( \frac1{|\ln \varepsilon|^{p}}
+\|\omega\|_{L^\infty(D_L)}^p 
\Bigr)\|\zeta\|_{C^1(D_L)}.
\end{split}
\end{equation}

\end{lemma}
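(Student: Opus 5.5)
We split $R_{\varepsilon,\mathbf{x}}(\omega,\zeta)$ into the nonlinear Taylor remainder (the first sum in \eqref{eq4-4}) and the geometric term (the second sum, with prefactor $2\partial_{y_2}\zeta+|\nabla\Gamma_2|^2$), and estimate each on $\bigcup_j B_\delta(x_{0,j})$, since $R_{\varepsilon,\mathbf{x}}$ vanishes elsewhere. Throughout we use \eqref{eq3-10}: on $B_\delta(x_j)$ the argument $\mathcal{U}_{\varepsilon,\mathbf{x},\mathbf{a}}+\tilde\psi-\kappa_j$ equals $U_{\varepsilon,x_j,a_{\varepsilon,j}}-a_{\varepsilon,j}$ up to $O(s_{\varepsilon,j}/|\ln\varepsilon|)$. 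Hence $f_j$ and $f_j'$ evaluated there are supported in a ball $B_{Cs_{\varepsilon,j}}(x_j)$, with sizes $|f_j|\le C(\varepsilon/s_{\varepsilon,j})^{2p/(p-1)}\sim C|\ln\varepsilon|^{-p}$ and $|f_j'|\le C|\ln\varepsilon|^{-(p-1)}$, by \eqref{eq3-2}, \eqref{eq3-4}. Since the perturbation $\omega$ may enlarge the support slightly, we work on $B_{2Cs_{\varepsilon,j}}(x_j)$, valid as long as $\|\omega\|_{L^\infty}$ is small.

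For the first sum, write $\psi_0+\omega=\tilde\psi+(\omega+\psi_{02}(\zeta))+\psi_{03}$ by Lemma~\ref{l1-29-8}, and set $\theta=\omega+\psi_{02}(\zeta)+\psi_{03}$. The bracket then equals $f_j(A+\theta)-f_j(A)-f_j'(A)\theta+f_j'(A)\psi_{03}$ with $A=\mathcal{U}_{\varepsilon,\mathbf{x},\mathbf{a}}+\tilde\psi$. The last term is bounded by $|\ln\varepsilon|^{-(p-1)}\cdot|\ln s_\varepsilon|^{-2}=O(|\ln s_\varepsilon|^{-(p+1)})$, which is the first term of \eqref{31-28-8}. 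For the Taylor remainder we use the standard pointwise inequality for $t\mapsto t_+^p$:
\[
|(A+\theta)_+^p-A_+^p-pA_+^{p-1}\theta|\le C\bigl(A_+^{p-2}\theta^2+|\theta|^p\bigr)\quad(p\ge2),\qquad \le C|\theta|^p\quad(1<p<2).
\]
With $A_+\lesssim|\ln\varepsilon|^{-1}$, this gives $C|\theta|^{\min\{2,p\}}|\ln\varepsilon|^{-\max\{0,p-2\}}$ plus, for $p\ge 2$, the harmless term $|\theta|^p$.

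Next we split $\theta$. The $\omega$-part gives the second term of \eqref{31-28-8}. The $\psi_{03}$-part gives $|\ln s_\varepsilon|^{-2\min\{2,p\}}|\ln\varepsilon|^{-\max\{0,p-2\}}$, which is at most $C|\ln s_\varepsilon|^{-(p+1)}$; this needs a one-line check in each regime of $p$. For the cross terms involving $\psi_{02}(\zeta)$, we bound $\|\psi_{02}(\zeta)\|_{L^\infty}\le C\|\zeta\|_{C^1}$ by elliptic estimates for \eqref{100-29-8}. We then use $|\theta|^{q}\le C(|\omega|^q+|\psi_{02}(\zeta)|^q+\dots)$ together with $\|\zeta\|_{C^1}$ small, so that $|\psi_{02}(\zeta)|^{q}\le C\|\zeta\|_{C^1}\cdot|\ln\varepsilon|^{-1}$. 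This is absorbed into the last term, provided we track the a priori regime $\|\zeta\|_{C^1}\lesssim |\ln\varepsilon|^{-1}$ in which the fixed point will be sought.

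For the second sum, we write $|\nabla\Gamma_2|^2\le 2|\nabla\Gamma_0|^2+2|\nabla\zeta|^2$. Using $|\nabla\Gamma_0|\le C|\ln s_\varepsilon|^{-1}$ (from \eqref{8-29-8} and the boundary inverse estimate, since $\partial_{y_2}\mathcal{U}=O(|\ln\varepsilon|^{-1})$ on $y_2=\pi$) gives a contribution of order $|\ln\varepsilon|^{-2}$. The remaining prefactor is $O(\|\zeta\|_{C^1})$. The factor $f_j(\mathcal{U}+\psi_0+\omega)$ is bounded by $C(|\ln\varepsilon|^{-p}+\|\omega\|_{L^\infty}^p+|\psi_{02}(\zeta)+\psi_{03}|^p)$. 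Multiplying out gives $|\ln\varepsilon|^{-(p+2)}$, which goes into the first term, and $C(|\ln\varepsilon|^{-p}+\|\omega\|^p)\|\zeta\|_{C^1}$, which is the last term.

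The main obstacle is the bookkeeping of the $\zeta$-dependent pieces of $\theta$ inside the superlinear remainder. These are not of the exact form claimed and must be absorbed using smallness of $\|\zeta\|_{C^1}$. I would first try to fix the working ball $\|\zeta\|_{C^1}\le|\ln\varepsilon|^{-1}$ and verify that every such cross term is dominated by $|\ln\varepsilon|^{-p}\|\zeta\|_{C^1}$.
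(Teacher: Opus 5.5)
Your decomposition matches the paper's. You use Lemma~\ref{l1-29-8} plus a Taylor remainder for the first sum in \eqref{eq4-4}. For the second sum you use $|f_j|\le C(|\ln\varepsilon|^{-p}+\|\omega\|_{L^\infty}^p)$ times the prefactor.

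The paper's proof, however, writes the remainder with $\|\omega\|_{L^\infty}$ alone and silently drops the $\psi_{02}(\zeta)$ part of your $\theta$. So you have correctly located the crux, but your way of closing it fails by one logarithm:
\begin{itemize}
\item For $p\ge2$ the relevant piece is $A_+^{p-2}\psi_{02}(\zeta)^2\le C|\ln\varepsilon|^{-(p-2)}\|\zeta\|_{C^1}^2$. Under $\|\zeta\|_{C^1}\le|\ln\varepsilon|^{-1}$ this is only $\le C|\ln\varepsilon|^{-(p-1)}\|\zeta\|_{C^1}$.
\item For $1<p<2$ the piece is $|\psi_{02}(\zeta)|^p\le C\|\zeta\|_{C^1}^{p-1}\|\zeta\|_{C^1}\le C|\ln\varepsilon|^{-(p-1)}\|\zeta\|_{C^1}$. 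Your claimed bound $C|\ln\varepsilon|^{-1}\|\zeta\|_{C^1}$ is false there.
\end{itemize}
In both cases you sit a factor $|\ln\varepsilon|$ above $C|\ln\varepsilon|^{-p}\|\zeta\|_{C^1}$. Absorption would need $\|\zeta\|_{C^1}\le C|\ln\varepsilon|^{-2}$ (for $p\ge2$), resp.\ $\|\zeta\|_{C^1}\le C|\ln\varepsilon|^{-p/(p-1)}$ (for $1<p<2$). Neither is compatible with your ball or with $M_2$.

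No sharper bookkeeping can repair this, because the term is really there.
\begin{itemize}
\item Near $x_j$, $\psi_{02}(\zeta)$ is essentially a constant $c_j$, comparable to $\|\zeta\|_{C^1}$ for suitable $\zeta$.
\item In the core $A_+\sim|\ln\varepsilon|^{-1}\gg|c_j|$, so the bracket equals $\tfrac{p(p-1)}2A_+^{p-2}c_j^2(1+o(1))\sim|\ln\varepsilon|^{-(p-2)}c_j^2$.
\item Take $\omega=0$ and $\|\zeta\|_{C^1}\sim|\ln\varepsilon|^{-1-\sigma}$ with $0<\sigma<\frac12$. Such $\zeta$ lie in your ball, and in $M_2$ for the small $\sigma$ the paper uses.
\item The left side of \eqref{31-28-8} is then of order $|\ln\varepsilon|^{-p-2\sigma}$, while its right side is $O(|\ln\varepsilon|^{-p-1})$.
\end{itemize}
So \eqref{31-28-8} cannot hold as stated in the regime where it is applied.

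The correct statement adds $C\|\zeta\|_{C^1}^{\min\{2,p\}}|\ln\varepsilon|^{-\max\{0,p-2\}}$ to the right side. Equivalently, replace $\|\omega\|_{L^\infty}$ by $\|\omega\|_{L^\infty}+\|\zeta\|_{C^1}$ in the second term; this also covers the mixed $\omega\,\psi_{02}(\zeta)$ terms. Your pointwise Taylor inequality proves this version directly. On $M_1\times M_2$ the new term is $O(|\ln\varepsilon|^{-p-\min\{p,2\}\sigma})$, exactly like the $\omega$-term in \eqref{10-3-9}, so Proposition~\ref{pro4-4} is unaffected.
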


\begin{proof}

By Lemma~\ref{l3-12-5}, we have

\begin{equation}\label{eq4-43a}
\begin{split}
&
f_j\big(\mathcal{U}_{\varepsilon,\mathbf{x},\mathbf{a}}+ \psi_0
+\omega\big)
-f_j\big(\mathcal{U}_{\varepsilon,\mathbf{x},\mathbf{a}}+\tilde \psi\big)
-f'_j\big(\mathcal{U}_{\varepsilon,\mathbf{x},\mathbf{a}}+\tilde \psi
\big)(\omega+\psi_{02}(\zeta)
)\\
=&O\Big(
\big(\mathcal{U}_{\varepsilon,\mathbf{x},\mathbf{a}}+\tilde \psi-\kappa_j\big)_+^{\max \{0,p-2\}}\|\omega\|_{L^\infty(\Omega)}^{\min \{2,p\}}+
O\Big(
\big(\mathcal{U}_{\varepsilon,\mathbf{x},\mathbf{a}}+\tilde \psi-\kappa_j\big)_+^{ p-1}\|\psi_{03}\|_{L^\infty(\Omega)}
\\
=&O\Big(\frac{\|\omega\|_{L^\infty(\Omega)}^{\min \{2,p\}}}
{ |\ln\varepsilon|^{\max\{0,p-2\} }}\Big)+O\Big(\frac{1}
{ |\ln\varepsilon|^{p+1 }}\Big),
\end{split}
\end{equation}
if $\tau_j=0$.  while for $\tau_j=1$, we can prove in a similar way that 
\eqref{eq4-43a} also holds.

 On the other hand,  we have
\begin{equation}\label{1-13-5}
\begin{split}
&\bigl( 2\frac{\partial \zeta}{\partial y_2}+\bigl(\frac{\partial \Gamma_2}{\partial y_2}\bigr)^2 +
\bigl(\frac{\partial \Gamma_2}{\partial y_1}\bigr)^2
\bigr)
f_j\big(\mathcal{U}_{\varepsilon,\mathbf{x},\mathbf{a}}+\tilde \psi
+\omega\big)\\
=& O\Bigl( \frac1{|\ln \varepsilon|^{p}}
+\|\omega\|_{L^\infty(D_L)}^p 
\Bigr)\bigl(\|\zeta\|_{C^1(D_L)}+\|\Gamma_2\|^2_{C^1(D_L)}\bigr).
\end{split}
\end{equation}

Combining  \eqref{eq4-43a} and \eqref{1-13-5}, we obtain \eqref{31-28-8}.

\end{proof}

\begin{lemma}\label{l1-28-8}
We have

\[
\|\mathcal F(u,\Gamma)\|_{C^{1,\beta}(0,L)}\le
\frac{C}{|\ln \ep|^2}+C\|\omega\|^2_{C^{2,\beta}(0,L)}+C\|\zeta \|^2_{C^{2,\beta}(0,L)},
\]
where $\mathcal F(u,\Gamma)$ is given in \eqref{1-3-9}.
\end{lemma}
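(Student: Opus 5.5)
We will prove the bound by Taylor-expanding the Bernoulli expression \eqref{1-3-9} around the trivial state, with $u=\mathcal{U}_{\varepsilon,\mathbf{x},\mathbf{a}}+\omega$ and $\Gamma_2=\Gamma_0+\zeta$ on $\{y_2=\pi\}$. The point is that $\mathcal F$ contains no constant and no linear terms. The linear parts have been moved to the left side of \eqref{62-24-8}, and the constant $\frac{\pi^2\gamma^2}{8}$ has been absorbed into $C$. So $\mathcal F$ is a smooth function, vanishing to second order at zero, of the boundary quantities
\[
\partial_{y_2}u(\cdot,\pi),\qquad \Gamma_2(\cdot,\pi),\qquad \nabla\Gamma_2(\cdot,\pi),\qquad \partial_{y_2}r(\cdot,\pi),
\]
where $r=\psi_0-\psi_{01}-\psi_{02}$. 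We write $|1+\Gamma'|^{-2}=1-2\partial_{y_2}\Gamma_2+Q(\nabla\Gamma_2)$ with $Q$ analytic and quadratic near $0$, since $\operatorname{Re}\Gamma'=\partial_{y_2}\Gamma_2$ by Cauchy--Riemann. We also use $\partial_{y_2}\psi_0=-\frac{\pi\gamma}2-\gamma\Gamma_2+\partial_{y_2}r$ from \eqref{4-15-6}. Expanding the product in \eqref{1-3-9} then writes $\mathcal F$ as a finite sum of products of at least two of the listed quantities, times analytic coefficients. This upgrades \eqref{0-24-8} from $L^\infty$ to $C^{1,\beta}$ once each factor is controlled in $C^{1,\beta}(0,L)$, because $C^{1,\beta}$ is an algebra and analytic functions of small $C^{1,\beta}$ arguments stay bounded.

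We then bound each factor in $C^{1,\beta}(0,L)$. First, on $y_2=\pi$ the approximate solution is harmonic away from the vortices. By \eqref{12-8-5} it equals $-\frac{2\pi a_j}{\ln s_{\varepsilon,j}}G_p(\cdot,x_j)$ plus exponentially small terms, so
\[
\|\partial_{y_2}\mathcal{U}_{\varepsilon,\mathbf{x},\mathbf{a}}\|_{C^{1,\beta}(0,L)}\le \frac{C}{|\ln\varepsilon|}.
\]
This holds because $G_p(\cdot,x_j)$ is smooth up to $y_2=\pi$ for $x_j$ at distance $\delta$ from the boundary. Second, $\|\Gamma_0\|_{C^{2,\beta}}\le C/|\ln\varepsilon|$, by the Schauder estimate for the nonresonant mixed problem \eqref{8-29-8} (Appendix A), whose right side is of size $1/|\ln\varepsilon|$ in $C^{1,\beta}$. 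Third, we need a $C^{1,\beta}$ version of the bound on $\nabla r$, namely $\|\nabla r\|_{C^{1,\beta}}\le C\|\Gamma'\|_{C^{1,\beta}}^2$. It follows from Schauder theory for $-\Delta r=\gamma|\Gamma'|^2$ with Dirichlet data, provided $\Gamma'$ is controlled in $C^{1,\beta}(\overline{D_L})$. That control comes from $\|\Gamma_2\|_{C^{2,\beta}(D_L)}$, by harmonicity and the conjugate relation.

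Squaring the small quantities finishes the argument. Every pure $\mathcal{U}$ or $\Gamma_0$ contribution is at least quadratic, hence $O(|\ln\varepsilon|^{-2})$. Terms quadratic in $(\omega,\zeta)$ give $C\|\omega\|^2_{C^{2,\beta}}+C\|\zeta\|^2_{C^{2,\beta}}$; here we read the norms of $\omega$ on a neighborhood of $y_2=\pi$, where $\omega$ is controlled in $C^{2,\beta}$ away from the vortices as in Proposition~\ref{pro4-2}. The main obstacle is the mixed terms, for example $\partial_{y_2}\mathcal{U}\cdot\partial_{y_2}\omega$ or $\Gamma_0\cdot\zeta$. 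They are of size $|\ln\varepsilon|^{-1}(\|\omega\|+\|\zeta\|)$, and we absorb them by Young's inequality into $C|\ln\varepsilon|^{-2}+C\|\omega\|^2+C\|\zeta\|^2$. We also need that $\mathcal F$ contains no term linear in $\omega$ or $\zeta$ alone. This requires checking that the coefficients in \eqref{1-3-9} match the exact linearization in \eqref{12-15-6}. If the $L$ appearing in the factor $g+\frac{\pi}{2}\gamma^2L$ produces a linear leftover, it must be reinterpreted as the normalization $a$ in \eqref{30-23-8}, where it is harmless after subtracting the mean.
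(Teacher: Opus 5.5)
Your proposal follows the paper's argument, written out in more detail. The paper applies the quadratic bound \eqref{0-24-8} with $u=\mathcal U_{\varepsilon,\mathbf x,\mathbf a}+\omega$, $\Gamma_2=\Gamma_0+\zeta$, uses that $\partial_{y_2}\mathcal U_{\varepsilon,\mathbf x,\mathbf a}$ and $\Gamma_0$ are $O(|\ln\varepsilon|^{-1})$, and then simply asserts the $C^{1,\beta}$ version; your algebra-property, Schauder (for $r$ and $\Gamma_0$) and Young arguments supply that step, so the proof is correct once $\mathcal F$ is read as the genuine remainder of \eqref{12-15-6}.

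The one faulty point is your closing remark. The constant $a$ in \eqref{30-23-8} only multiplies the mean of $\Gamma_2$. So a leftover linear term in $\Gamma_2$ survives the mean subtraction and would destroy the quadratic bound, and so would the term $-\frac{\pi\gamma}{2}\partial_{y_2}u$, which the printed \eqref{1-3-9} never subtracts. These are misprints in \eqref{1-3-9} that must be corrected (coefficient $g+\frac{\pi}{2}\gamma^2$, with $\mathcal F$ defined as the remainder satisfying \eqref{0-24-8}); they cannot be absorbed into the normalization.
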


\begin{proof}
 We have

\[
\begin{split}
|\mathcal F(\mathcal{U}_{\varepsilon,\mathbf{x},\mathbf{a}}
+\omega,\Gamma)|\le & C(\frac{\partial u}{\partial y_2})^2+C\Gamma_2^2+C
|\nabla \Gamma_2|^2
\\
\le & \frac{C}{|\ln\ep|^2}+C|\nabla\omega|^2+C|\zeta|^2+C|\nabla\zeta|^2.
\end{split}
\]
This gives

\[
\begin{split}
&\|\mathcal F(\mathcal{U}_{\varepsilon,\mathbf{x},\mathbf{a}}
+\omega,\Gamma)\|_{C(0,L)}\le
\frac{C}{|\ln \ep|^2}+C\| \omega\|^2_{C^{1}(0,L)}+C\|\zeta \|^2_{C^{1}(0,L)}.
\end{split}
\]

Using \eqref{1-3-9}, we can also show that

\[
\begin{split}
&\|\mathcal F(\mathcal{U}_{\varepsilon,\mathbf{x},\mathbf{a}}
+\omega,\Gamma)\|_{C^{1,\beta}(0,L)}\le
\frac{C}{|\ln \ep|^2}+C\|\omega\|^2_{C^{2,\beta}(0,L)}+C\|\zeta \|^2_{C^{2,\beta}(0,L)}.
\end{split}
\]

\end{proof}

Let
\[
M_1=\{\omega:\, \omega\in E_{\varepsilon,\mathbf{x}},\,\,
\|\omega\|_{L^\infty(D_L)}+\|\omega\|_{C^{2,\beta}(D_L\setminus \cup_{j=1}^k B_{\delta}(x_j))}
\leq \frac1{|
\ln \varepsilon|^{1+\sigma}} \},
\]
and

\[
M_2=\{\zeta:  \zeta\in C^{2,\beta}(0,L),\; \|\phi\|_{C^{2,\alpha}(0,L)}\le  \frac1{|
\ln \varepsilon|^{1+\sigma}}  \},
\]
where $\sigma>0$ is a small constant.  We are ready to prove that
the operator $\mathbf T$ defined in \eqref{0-3-9}  has a unique fixed point
in $ M_1\times M_2$.

\begin{proposition}\label{pro4-4}
There is an $\varepsilon_0>0$ such that for any $\varepsilon\in (0,\varepsilon_0)$ and  $\mathbf{x} \in \prod_{j=1}^k B_\delta(x_{0,j})$, there exist uniquely
$\omega_{\varepsilon,\mathbf{x}}\in E_{\varepsilon,\mathbf{x}}$
and $\zeta_{\varepsilon,\mathbf{x}}$, satisfying
\[
(\omega_{\varepsilon,\mathbf{x}}, \zeta_{\varepsilon,\mathbf{x}})=\mathbf T
(\omega_{\varepsilon,\mathbf{x}}, \zeta_{\varepsilon,\mathbf{x}}).
\]
Moreover, 
\begin{equation}\label{26-3-9}
\|\omega_{\varepsilon,\mathbf{x}}\|_{L^\infty(\Omega)}+
\| \omega_{\varepsilon,\mathbf{x}}\|_{C^{2,\beta}(D_L\setminus \cup_{j=1}^k B_{\delta}(x_j))}
+\|\zeta_{\varepsilon,\mathbf{x}}\|_{C^{2,\beta}(0,L)   }\leq  \frac{C}{|\ln\varepsilon|^{1+\sigma}}.
\end{equation}

\end{proposition}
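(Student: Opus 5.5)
We obtain the fixed point from the contraction mapping principle applied to $\mathbf T$ on the closed set $M_1\times M_2$, with the norm
\[
\|(\omega,\zeta)\|:=\|\omega\|_{L^\infty(D_L)}+\|\omega\|_{C^{2,\beta}(D_L\setminus\cup_j B_\delta(x_j))}+\|\zeta\|_{C^{2,\beta}(0,L)}.
\]
The linear input is Proposition~\ref{pro1-10-5} together with the estimate of Proposition~\ref{pro4-2}. These give
\[
\|T(f,f_1)\|\le C\bigl(|\ln\ep|^{p-1}\|f\|_{L^\infty}+\|f_1\|_{C^{1,\beta}}\bigr).
\]
Strictly, Proposition~\ref{pro4-2} is stated for $f$ supported in $\cup_j B_{Ms_{\ep,j}}(x_j)$. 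I would first note that $l_{\ep,\mathbf x}$ and $R_{\ep,\mathbf x}$ are supported where $f_j(\mathcal U+\psi_0+\omega)\neq0$, which lies inside such balls because $\|\omega\|_{L^\infty}$ and $\psi_{03}$ are small. Otherwise I would rerun the blow-up argument with the outer part of $f$ absorbed via the Green function.

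\textbf{Step 1: $\mathbf T$ maps $M_1\times M_2$ into itself.} Since $Q_{\ep,\mathbf x}$ is bounded on $L^\infty$, Lemma~\ref{l2-12-5} and Lemma~\ref{l3-12-5} give
\[
|\ln\ep|^{p-1}\|Q(l+R)\|_{L^\infty}\le C\Bigl(|\ln\ep|^{-2}+|\ln\ep|^{p-1-\max\{0,p-2\}}\|\omega\|^{\min\{2,p\}}+|\ln\ep|^{-1}\|\zeta\|_{C^1}+\dots\Bigr).
\]
On $M_1\times M_2$ every term is $o(|\ln\ep|^{-1-\sigma})$ once $\sigma$ is small, since $\|\omega\|^{\min\{2,p\}}\le|\ln\ep|^{-\min\{2,p\}(1+\sigma)}$.

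The boundary term comes from Lemma~\ref{l1-28-8}, which bounds $\|\mathcal F\|_{C^{1,\beta}}$ by $C|\ln\ep|^{-2}+O(|\ln\ep|^{-2-2\sigma})$. To make this legitimate I need $\|\omega\|_{C^{2,\beta}}$ near $\{y_2=\pi\}$, which is part of the $M_1$ norm because the boundary lies outside $\cup_j B_\delta(x_j)$. I also need $\Gamma_0=O(|\ln\ep|^{-1})$ in $C^{2,\beta}$. This holds because $\partial_{y_2}\mathcal U$ on $y_2=\pi$ is $O(|\ln\ep|^{-1})$ by \eqref{12-8-5}, combined with Appendix~A.

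\textbf{Step 2: $\mathbf T$ is a contraction.} Because $T$ is linear, it suffices to estimate the differences $R(\omega,\zeta)-R(\omega',\zeta')$ and $\mathcal F(\cdot)-\mathcal F(\cdot)$.

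For $R$, the mean value theorem applied to $f_j$ gives factors of size
\[
\bigl(|\mathcal U+\tilde\psi-\kappa_j|_+^{p-2}\bigr)\bigl(\|\omega\|+\|\omega'\|\bigr)\le |\ln\ep|^{-(p-2)}|\ln\ep|^{-1-\sigma},
\]
multiplied by $\|\omega-\omega'\|$. The coupling term produces $|\ln\ep|^{-p}\|\zeta-\zeta'\|_{C^1}$. After multiplying by $|\ln\ep|^{p-1}$, both contributions carry a factor $o(1)$.

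For $\mathcal F$, the nonlinearity is quadratic in $(\nabla u,\Gamma_2,\nabla\Gamma_2)$, so its differences carry the factor $|\ln\ep|^{-1}$.

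\textbf{Main obstacle.} I expect the hardest step to be the case $1<p<2$ in the contraction estimate. There $f_j'$ is only Hölder continuous, so the difference of $f_j'$ terms is bounded by $\|\omega-\omega'\|^{p-1}$, not by a Lipschitz bound. I would handle this by exploiting the fact that the set where $\mathcal U+\tilde\psi-\kappa_j$ is small has measure $O(s_\ep^2|\ln\ep|^{-1})$. Failing that, I would drop contraction and use a Schauder fixed point for existence, recovering uniqueness in $M_1\times M_2$ from the injectivity in Proposition~\ref{pro4-2}.

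The estimate \eqref{26-3-9} then follows from the self-map bound of Step 1.
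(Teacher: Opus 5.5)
Your scheme is the paper's: a contraction on $M_1\times M_2$, with the support observation \eqref{eq4-41} making Proposition~\ref{pro4-2} applicable. Lemmas~\ref{l2-12-5}, \ref{l3-12-5} and \ref{l1-28-8} give the self-map, and Lipschitz bounds on $R_{\varepsilon,\mathbf x}$ and $\mathcal F$ give the contraction. However, your Step~2 only closes for $p\ge 2$, since the factor $(\cdot)_+^{p-2}$ is unbounded otherwise. The obstacle you anticipate for $1<p<2$ does not actually exist.

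Set $A=\mathcal U_{\varepsilon,\mathbf x,\mathbf a}+\tilde\psi-\kappa_j$, $f(t)=t_+^p$ and $N(h)=f(A+h)-f(A)-f'(A)h$. Then
\[
N(h_1)-N(h_2)=(h_1-h_2)\int_0^1\Bigl[f'\bigl(A+h_2+\theta(h_1-h_2)\bigr)-f'(A)\Bigr]\,d\theta .
\]
The $(p-1)$-H\"older continuity of $f'$ bounds the bracket by $C(|h_1|+|h_2|)^{p-1}$. So the remainder is genuinely Lipschitz, with $h_i=\omega_i+\psi_{02}(\zeta_i)+\psi_{03}(\zeta_i)$ and Lipschitz constant $C|\ln\varepsilon|^{-(1+\sigma)(p-1)}$ on $M_1\times M_2$. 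After multiplying by $|\ln\varepsilon|^{p-1}$ this becomes $C|\ln\varepsilon|^{-\sigma(p-1)}\to 0$. The H\"older exponent falls on $\|h_i\|$, not on $\|h_1-h_2\|$; this is exactly the exponent $\min\{1,p-1\}$ in \eqref{neq4-43a}.

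The same expansion leaves a linear term $f'(A)\bigl[\psi_{03}(\zeta_1)-\psi_{03}(\zeta_2)\bigr]$, which you did not list. It is harmless: $\psi_{03}$ is quadratic in $\nabla(\Gamma_0+\zeta)$, so its Lipschitz constant is $O(|\ln\varepsilon|^{-1})$.

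This matters because neither of your fallbacks would have worked. First, the small measure of the set where $A$ is small cannot improve an $L^\infty$ bound, and $L^\infty$ is the norm Proposition~\ref{pro4-2} requires. Second, injectivity of the linear problem in Proposition~\ref{pro4-2} gives no uniqueness for the nonlinear fixed point. The difference of two fixed points solves the linear problem with right-hand side $Q_{\varepsilon,\mathbf x}\bigl[R_{\varepsilon,\mathbf x}(\omega_1,\zeta_1)-R_{\varepsilon,\mathbf x}(\omega_2,\zeta_2)\bigr]$ plus the corresponding $\mathcal F$-difference. Showing that this difference vanishes needs exactly the Lipschitz bound above. A Schauder argument would additionally need a compactness of $\mathbf T$ in the $L^\infty$-based norm, which you have not supplied.

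Uniqueness is not cosmetic here. It gives the translation equivariance in Remark~\ref{re2-15-5}, and it makes $\mathbf x\mapsto(\omega_{\varepsilon,\mathbf x},\zeta_{\varepsilon,\mathbf x})$ a well-defined map for the reduced problem.
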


\begin{proof}
We will use the contraction mapping theorem to prove this proposition.

Firstly, we show that $\mathbf T$ maps $M_1\times M_2$ to itself.

For any $(\omega, \phi)\in M_1\times M_2$,  we have that
\begin{equation}\label{eq4-41}
l_{\varepsilon,\mathbf{x}}=R_{\varepsilon,\mathbf{x}}(\omega,\zeta)=0,\quad \text{in}\,\, \Omega\setminus \cup_{j=1}^k B_{Ls_{\varepsilon,j}(x_j)}.
\end{equation}
Then we have
\[
Q_{\varepsilon,\mathbf{x}}(l_{\varepsilon,\mathbf{x}}
+R_{\varepsilon,\mathbf{x}}(\omega,\zeta))=0,\quad \text{in}\,\, \Omega\setminus \cup_{j=1}^k B_{Ls_{\varepsilon,j}(x_j)}.
\]
Therefore, by  Proposition~\ref{pro4-2} we get
\begin{equation}\label{eq4-42}
\begin{split}
&\|\mathbf T_1(\omega,\zeta)\|_{L^\infty(D_L)}+\| \mathbf T_1(\omega,\zeta)\|_{C^{2,\beta}(D_L\setminus \cup_{j=1}^k B_{\delta}(x_j))}+\|
\mathbf T_2(\omega,\zeta)\|_{C^{2,\beta}(D_L)}
\\
\leq  &C |\ln \ep|^{p-1} \Bigl(\|l_{\varepsilon,\mathbf{x}}\|_{L^\infty}
+\|R_{\varepsilon,\mathbf{x}}(\omega,\zeta)\|_{L^\infty}\Bigr)+C
\|\mathcal F(u,\Gamma)\|_{C^{1,\beta}(0,L)}.
\end{split}
\end{equation}

On the other hand, by Lemma~\ref{l3-12-5},

\begin{equation}\label{10-3-9}
\begin{split}
\|R_{\varepsilon,\mathbf{x}}(\omega,\zeta)\|_{L^\infty(D_L)}
\le &  \frac C{|\ln s_\ep|^{p+1} }+\frac{C\|\omega\|_{L^\infty(\Omega)}^{\min \{2,p\}}
}{ |\ln\varepsilon|^{\max\{0,p-2\}}}
\\&+C\Bigl( \frac1{|\ln \varepsilon|^{p}}
+\|\omega\|_{L^\infty(D_L)}^p 
\Bigr)\|\zeta\|_{C^1(D_L)}\\
\le & \frac C{|\ln \ep|^{p+1} }+ \frac{C
}{ |\ln\varepsilon|^{\max\{0,p-2\}}}
\frac{1}{  |\ln \ep|^{(1+\sigma)(\min \{2,p\})}}\\
\le & \frac{C}{|\ln\ep|^{p+\min(p,2)\sigma}}.
\end{split}
\end{equation}
which, together with Lemmas~\ref{l2-12-5}  and \ref{l1-28-8},
gives

\begin{equation}\label{eq4-45}
\begin{split}
\|\mathbf T_1(\omega,\zeta)\|_{L^\infty(D_L)}+\| \mathbf T_1(\omega,\zeta)\|_{C^{2,\beta}(D_L\setminus \cup_{j=1}^k B_{\delta}(x_j))}+\|
\mathbf T_2(\omega,\zeta)\|_{C^{2,\beta}(D_L)}
\le  \frac{1}{  |\ln \ep|^{1+\sigma}}.
\end{split}
\end{equation}
if $\sigma>0$ is small.
Thus, $T$ maps $M_1\times M_2$ to itself.

\medskip

Next, we show that $\mathbf T$ is a contraction map.
For any $(\omega_1,\zeta_1),  (\omega_2,\zeta_2)\in M_1\times M_2$, we have

\[
\begin{split}
&\mathbf T_1(\omega_1,\phi_1)-\mathbf T_1(\omega_2,\phi_2)\\
=&T\Bigl( Q_{\varepsilon,\mathbf{x}}
R_{\varepsilon,\mathbf{x}}(\omega_1,\zeta_1)-Q_{\varepsilon,\mathbf{x}}
R_{\varepsilon,\mathbf{x}}(\omega_1,\zeta_1),\,
 \mathcal F_1(\omega_1,\zeta_1)- \mathcal F_1(\omega_2,\zeta_2)\Bigr),
\end{split}
\]
where

\[
\mathcal F_1(\omega,\zeta)=\mathcal F(\mathcal{U}_{\varepsilon,\mathbf{x},\mathbf{a}}
+\omega,\Gamma_0+\zeta)-  \frac1L \int_{0}^L\mathcal F(\mathcal{U}_{\varepsilon,\mathbf{x},\mathbf{a}}
+\omega,\Gamma_0+\zeta).
\]

 From  Proposition~\ref{pro4-2}, we obtain
\begin{equation}\label{eq4-46}
\begin{split}
&\| \mathbf T_1(\omega_1,\zeta_1)-\mathbf T_1(\omega_2,\zeta_2)\|_{L^\infty(D_L)}+\| \mathbf T_1(\omega_1,\zeta_1)-\mathbf T_1(\omega_2,\zeta_2)\|_{C^{2,\beta}(D_L\setminus \cup_{j=1}^k B_{\delta}(x_j))}\\
&+\|
\mathbf T_2(\omega_1,\zeta_1)-\mathbf T_2(\omega_2,\zeta_2)\|_{C^{2,\beta}(D_L)}\\
\leq  &C |\ln \ep|^{p-1} 
\|R_{\varepsilon,\mathbf{x}}(\omega_1,\zeta_1)-R_{\varepsilon,\mathbf{x}}(\omega_2,\zeta_2)
\|_{L^\infty(D_L)}\\
&+C
\|\mathcal F_1(\omega_1,\zeta_1)-\mathcal F_1(\omega_2,\zeta_2)\|_{C^{1,\beta}(0,L)}.
\end{split}
\end{equation}
To estimate $
\|R_{\varepsilon,\mathbf{x}}(\omega_1,\phi_1)-
R_{\varepsilon,\mathbf{x}}(\omega_2,\phi_2)\|_{L^\infty(D_L)}
$, we proceed as follows.

Note that the background flow $\psi_0=\psi_0(\zeta)$ defined in \eqref{2-15-6} depends on $\Gamma_0+\zeta$.
We have

\[
\psi_0(\zeta)=\tilde \psi +\psi_{02}(\zeta) +\psi_{03}(\zeta),
\]
where $\psi_{03}(\zeta)$ is the solution of 

\[
-\Delta \psi_{03}= \Bigl( \bigl(\frac{\partial \Gamma_2}{\partial y_1}\bigr)^2  +\bigl(\frac{\partial \Gamma_2}{\partial y_2}\bigr)^2 \Bigr)\gamma.
\]
Thus, if $\tau_j=0$,
\begin{equation}\label{neq4-43a}
\begin{split}
&
\Bigl[f_j\big(\mathcal{U}_{\varepsilon,\mathbf{x},\mathbf{a}}+ \psi_0(\zeta_1)
+\omega_1\big)
-f_j\big(\mathcal{U}_{\varepsilon,\mathbf{x},\mathbf{a}}+\tilde \psi\big)
-f'_j\big(\mathcal{U}_{\varepsilon,\mathbf{x},\mathbf{a}}+\tilde \psi
\big)(\omega_1+\psi_{02}(\zeta_1)\Bigr]\\
&-\Bigl[f_j\big(\mathcal{U}_{\varepsilon,\mathbf{x},\mathbf{a}}+ \psi_0(\zeta_2)
+\omega_2\big)
-f_j\big(\mathcal{U}_{\varepsilon,\mathbf{x},\mathbf{a}}+\tilde \psi\big)
-f'_j\big(\mathcal{U}_{\varepsilon,\mathbf{x},\mathbf{a}}+\tilde \psi
\big)(\omega_2+\psi_{02}(\zeta_2)\Bigr]
\\
=&\big(\mathcal{U}_{\varepsilon,\mathbf{x},\mathbf{a}}+ \psi_0(\zeta_1)
+\omega_1-\kappa_j
\big)_+^{p}-\big(\mathcal{U}_{\varepsilon,\mathbf{x},\mathbf{a}}+ \psi_0(\zeta_2)
+\omega_2-\kappa_j
\big)_+^{p}\\
&
-p\big(\mathcal{U}_{\varepsilon,\mathbf{x},\mathbf{a}}+\tilde \psi-\kappa_j\big)_+^{p-1}[(\omega_1-
\omega_2)+ (\psi_{02}(\zeta_1)  - \psi_{02}(\zeta_2))]
\\
=&p\big(\mathcal{U}_{\varepsilon,\mathbf{x},\mathbf{a}}+\tilde \psi-\kappa_j\big)_+^{p-1}[\psi_{03}(
\zeta_1)-\psi_{03}(
\zeta_2)]
\\
&+O\Big(
\big(\mathcal{U}_{\varepsilon,\mathbf{x},\mathbf{a}}+\tilde \psi-\kappa_j\big)_+^{\max \{0,p-2\}}\bigl(\|\omega_1\|_{L^\infty(D_L)}^{\min \{1,p-1\}}+\|\omega_2\|_{L^\infty(D_L)}^{\min \{1,p-1\}}\\
&\qquad\qquad+\|\zeta_1\|_{L^\infty(D_L)}^{\min \{1,p-1\}}+\|\zeta_2\|_{L^\infty(D_L)}^{\min \{1,p-1\}}\bigr)\Big)\\
&\times\Bigl[\|\omega_1-\omega_2\|_{L^\infty(D_L)}
 +\|\psi_{02}(\zeta_1)  - \psi_{02}(\zeta_2)\|_{L^\infty(D_L)}
+\|\psi_{03}(\zeta_1)  - \psi_{03}(\zeta_2)\|_{L^\infty(D_L)}\Bigr]
\\
=&O\Big(\frac{1
}{ |\ln\varepsilon|^{p-1+\sigma_1}}\Big)\Big(\|\omega_1-\omega_2\|_{L^\infty(D_L)}+
\|\zeta_1  - \zeta_2\|_{L^\infty(D_L)}\Bigr),
\end{split}
\end{equation}
where $\sigma_1>0$ is a small constant, since

\[
\|\psi_{02}(\zeta_1)  - \psi_{02}(\zeta_2)\|_{L^\infty(D_L)}\le C 
\|\zeta_1  - \zeta_2\|_{L^\infty(D_L)},
\]
and 

\[
\|\psi_{03}(\zeta_1)  - \psi_{03}(\zeta_2)\|_{L^\infty(D_L)}\le \frac{C}{|\ln\ep|}
\|\zeta_1  - \zeta_2\|_{L^\infty(D_L)}.
\]

Similarly, for $\tau_j=1$, we can prove in a similar way that 
\eqref{eq4-43a} also holds.

On the other hand, 

\begin{equation}\label{20-3-9}
\begin{split}
&\Bigl[\bigl( 2\frac{\partial \zeta_1}{\partial y_2}+\bigl(\frac{\partial (
\Gamma_0+\zeta_1)}{\partial y_2}\bigr)^2 
+
\bigl(\frac{\partial  (
\Gamma_0+\zeta_1)}{\partial y_1}\bigr)^2
\bigr)\sum\limits_{j=1}^k1_{B_\delta(x_{0,j})}
f_j\big(\mathcal{U}_{\varepsilon,\mathbf{x},\mathbf{a}}+ \psi_0
+\omega_1\big)\Bigr]\\
&-\Bigl[\bigl( 2\frac{\partial \zeta_2}{\partial y_2}+\bigl(\frac{\partial (
\Gamma_0+\zeta_2)}{\partial y_2}\bigr)^2 
+
\bigl(\frac{\partial  (
\Gamma_0+\zeta_2)}{\partial y_1}\bigr)^2
\bigr)\sum\limits_{j=1}^k1_{B_\delta(x_{0,j})}
f_j\big(\mathcal{U}_{\varepsilon,\mathbf{x},\mathbf{a}}+ \psi_0
+\omega_2\big)\Bigr]\\
=&O\Big(\frac{1
}{ |\ln\varepsilon|^{p}}\Big)\Big(\|\omega_1-\omega_2\|_{L^\infty(D_L)}+
\|\zeta_1  - \zeta_2\|_{C^1(D_L)}\Bigr).
\end{split}
\end{equation}

Combining \eqref{neq4-43a} and \eqref{20-3-9}, we obtain

\begin{equation}\label{21-3-9}
\begin{split}
  & |\ln \ep|^{p-1} 
\|R_{\varepsilon,\mathbf{x}}(\omega_1,\zeta_1)-R_{\varepsilon,\mathbf{x}}(\omega_2,\zeta_2)
\|_{L^\infty(D_L)}\\
=&O\Big(\frac{1
}{ |\ln\varepsilon|^{\sigma_1}}\Big)\Big(\|\omega_1-\omega_2\|_{L^\infty(D_L)}+
\|\zeta_1  - \zeta_2\|_{C^1(D_L)}\Bigr).
\end{split}
\end{equation}

By direct computations, we can also show

\begin{equation}\label{22-3-9}
\begin{split}
  & \|\mathcal F_1(\omega_1,\zeta_1)-\mathcal F_1(\omega_2,\zeta_2)\|_{C^{1,\beta}(0,L)}\\
=&O\Big(\frac{1
}{ |\ln\varepsilon|^{\sigma_1}}\Big)\Big(\|\omega_1-\omega_2\|_{C^{1,\beta}(0,L)}+
\|\zeta_1  - \zeta_2\|_{C^{2,\beta}(0,L)}\Bigr).
\end{split}
\end{equation}
Thus, from \eqref{21-3-9} and \eqref{22-3-9}, we prove that
 $\mathbf T$ is a contraction map.
 By the contraction mapping theorem, there exist uniquely
$\omega_{\varepsilon,\mathbf{x}}\in E_{\varepsilon,\mathbf{x}}$
and $\zeta_{\varepsilon,\mathbf{x}}$, satisfying
\[
(\omega_{\varepsilon,\mathbf{x}}, \zeta_{\varepsilon,\mathbf{x}})=\mathbf T
(\omega_{\varepsilon,\mathbf{x}}, \zeta_{\varepsilon,\mathbf{x}}).
\]
Moreover, \eqref{26-3-9} follows from \eqref{eq4-45}.

\end{proof}

\begin{remark}\label{re2-15-5}

It follows from Remark~\ref{re1-15-5} that for any $c$,

 \begin{equation}\label{20-15-5}
\mathcal{U}_{\varepsilon,\mathbf{x},\mathbf{a}}(y)=
\mathcal{U}_{\varepsilon,\mathbf{\hat x},\mathbf{a}}(y- ce_1),
\end{equation}
 where $\mathbf{\hat x}= \mathbf{x} -(c e_1,\cdots, ce_1)$, $e_1=(1,0)$.
Then, using the uniqueness of the solution $(\omega_{\varepsilon,\mathbf{x}},
\zeta_{\varepsilon,\mathbf{x}})$, we can prove that

\begin{equation}\label{22-15-5}
\omega_{\varepsilon,\mathbf{x},\mathbf{a}}(y)=
\omega_{\varepsilon,\mathbf{\hat x},\mathbf{a}}(y-ce_1),\;\;\zeta_{\varepsilon,\mathbf{x},\mathbf{a}}(t)=
\zeta_{\varepsilon,\mathbf{\hat x},\mathbf{a}}(t-ce_1).
\end{equation}

\end{remark}

\subsection{ The case $\alpha>0$} This case is much simpler that the case $\alpha=0$,
because there is no background flow. That is, we can regard this case as a special
case in the last subsection with $\gamma=0$. The solution 
we look has the form
\[
u_\varepsilon=\mathcal{U}_{\varepsilon,\mathbf{x},\mathbf{a}}
+\omega_{\varepsilon,\mathbf{x}},\quad \Gamma_2=\zeta.
\]
Then we have the same estimates stated in Lemmas~\ref{l2-12-5} and \ref{l3-12-5}.

The operator is defined as

 \begin{equation}\label{10-4-9}
\begin{split}
&\hat{\mathbf T}(\omega,\zeta)= (\hat{\mathbf T}_1(\omega,\zeta), \hat{\mathbf T}_2(\omega,\zeta))\\
\\
=&\hat  T\Bigl( Q_{\varepsilon,\mathbf{x}}
\bigl(l_{\varepsilon,\mathbf{x}}
+R_{\varepsilon,\mathbf{x}}(\omega,\zeta)\bigr),\;  \Theta( \omega, \zeta)-  \frac1L \int_{0}^L
 \Theta( \omega, \zeta)\Bigr),
\end{split}
\end{equation}
where $\hat  T$ is given in \eqref{11-4-9}, and

\begin{equation}\label{20-4-9}
\begin{split}
 \Theta( \omega, \zeta)= &\alpha^2\Bigl[\frac1{ 1+2\frac{\partial \zeta}{\partial y_2}+\bigl(\frac{\partial \zeta}{\partial y_2}\bigr)^2 +
\bigl(\frac{\partial \zeta}{\partial y_1}\bigr)^2  } -1
 \Bigr]\frac{\partial^2 \zeta}{\partial^2 y_1}
  \\
 & +\frac{\alpha^2\Bigl[\frac{\partial^2 \zeta}{\partial^2 y_1} \frac{\partial \zeta}{\partial y_2}
-\frac{\partial^2 \zeta}{\partial y_1\partial y_2} \frac{\partial \zeta}{\partial y_1}\Bigr]
 }{ 1+2\frac{\partial \zeta}{\partial y_2}+\bigl(\frac{\partial \zeta}{\partial y_2}\bigr)^2 +
\bigl(\frac{\partial \zeta}{\partial y_1}\bigr)^2  }
\\
&-\frac12\frac{2\frac{\partial \zeta}{\partial y_2}+\bigl(\frac{\partial \zeta}{\partial y_2}\bigr)^2 +
\bigl(\frac{\partial \zeta}{\partial y_1}\bigr)^2
 }{ 1+2\frac{\partial \zeta}{\partial y_2}+\bigl(\frac{\partial \zeta}{\partial y_2}\bigr)^2 +
\bigl(\frac{\partial \zeta}{\partial y_1}\bigr)^2  }\Bigl(
\frac{\partial [ \mathcal{U}_{\varepsilon,\mathbf{x},\mathbf{a}}
+\omega  ] } {\partial y_2}
\Bigr)^2\Bigr|_{y_2=\pi}\\
&
-\frac12\Bigl(
\frac{\partial [ \mathcal{U}_{\varepsilon,\mathbf{x},\mathbf{a}}
+\omega  ] } {\partial y_2}
\Bigr)^2
+\frac12  \Bigl(
\frac{\partial  \mathcal{U}_{\varepsilon,\mathbf{x},\mathbf{a}}
 } {\partial y_2}
\Bigr)^2.
\end{split}
\end{equation}
Note that on $y_2=\pi$, $\Theta( \omega, \zeta)$ is higher order small term than
$\omega$ and $\zeta$.  Similar to 
Proposition~\ref{pro4-4}, we can prove the following result.

\begin{proposition}\label{p10-4-9}
There is an $\varepsilon_0>0$ such that for any $\varepsilon\in (0,\varepsilon_0)$ and  $\mathbf{x} \in \prod_{j=1}^k B_\delta(x_{0,j})$, there exist uniquely
$\omega_{\varepsilon,\mathbf{x}}\in E_{\varepsilon,\mathbf{x}}$
and $\zeta_{\varepsilon,\mathbf{x}}$, satisfying
\[
(\omega_{\varepsilon,\mathbf{x}}, \zeta_{\varepsilon,\mathbf{x}})=\hat{\mathbf T}
(\omega_{\varepsilon,\mathbf{x}}, \zeta_{\varepsilon,\mathbf{x}}).
\]
Moreover, 
\begin{equation}\label{26-3-9}
\|\omega_{\varepsilon,\mathbf{x}}\|_{L^\infty(\Omega)}+
\| \omega_{\varepsilon,\mathbf{x}}\|_{C^{2,\beta}(D_L\setminus \cup_{j=1}^k B_{\delta}(x_j))}
+\|\zeta_{\varepsilon,\mathbf{x}}\|_{C^{2,\beta}(0,L)   }\leq  \frac{C}{|\ln\varepsilon|^{1+\sigma}}.
\end{equation}

\end{proposition}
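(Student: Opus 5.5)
We prove Proposition~\ref{p10-4-9} by the contraction mapping theorem applied to $\hat{\mathbf T}$ on $M_1\times M_2$, following the proof of Proposition~\ref{pro4-4}. The linear inverse $\hat T$ from Proposition~\ref{p1-4-9} replaces $T$. Since $\gamma=0$, there is no background flow: $\tilde\psi=\psi_0=0$ and $\Gamma_0=0$, so $\Gamma_2=\zeta$. The interior data therefore reduce to $l_{\varepsilon,\mathbf x}$ and $R_{\varepsilon,\mathbf x}(\omega,\zeta)$ with all $\psi_{02},\psi_{03}$ terms absent.

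\textbf{Step 1: interior terms.} Lemma~\ref{l2-12-5} gives $\|l_{\varepsilon,\mathbf x}\|_{L^\infty}\le C|\ln\varepsilon|^{-p-1}$; the term containing $\partial_{y_2}\Gamma_0$ simply vanishes. Lemma~\ref{l3-12-5} gives
\[
\|R_{\varepsilon,\mathbf x}\|_{L^\infty}\le C|\ln\varepsilon|^{-p-\min(p,2)\sigma},
\]
exactly as in \eqref{10-3-9}. Both quantities vanish outside $\cup_j B_{Ls_{\varepsilon,j}}(x_j)$, so the weighted estimate of Proposition~\ref{p1-4-9} applies with the factor $|\ln\varepsilon|^{p-1}$. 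This yields a contribution $C|\ln\varepsilon|^{-1-\min(p,2)\sigma}$, which is at most $\tfrac12|\ln\varepsilon|^{-1-\sigma}$ for small $\varepsilon$.

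\textbf{Step 2: boundary term.} We estimate $\Theta(\omega,\zeta)-\frac1L\int_0^L\Theta$ in $C^\beta(0,L)$; only $C^\beta$ is needed because $-\alpha^2\partial^2+g$ gains two derivatives. Each term of \eqref{20-4-9} is one of three types.
\begin{itemize}
\item The first two terms have the form $\alpha^2\,O(|\nabla\zeta|)\,|\nabla^2\zeta|$, bounded by $C\|\zeta\|_{C^{2,\beta}}^2$.
\item The third term is $O(|\nabla\zeta|)\,(\partial_{y_2}\mathcal U+\partial_{y_2}\omega)^2$.
\item The last two terms reduce to $-\partial_{y_2}\mathcal U\,\partial_{y_2}\omega-\frac12(\partial_{y_2}\omega)^2$.
\end{itemize}
On $y_2=\pi$ the point $\pi$ lies at distance $\ge\delta$ from every $x_j$. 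By \eqref{12-8-5}, $\|\partial_{y_2}\mathcal U\|_{C^{1,\beta}}\le C|\ln\varepsilon|^{-1}$, and $\|\omega\|_{C^{2,\beta}}$ is controlled there by the $M_1$ norm. Hence
\[
\|\Theta\|_{C^\beta}\le C|\ln\varepsilon|^{-1}|\ln\varepsilon|^{-1-\sigma}+C|\ln\varepsilon|^{-2-2\sigma}\ll|\ln\varepsilon|^{-1-\sigma}.
\]
This is the one point where the $\alpha>0$ case differs in substance from Proposition~\ref{pro4-4}. The linear coupling term $\partial_{y_2}\mathcal U\,\partial_{y_2}\omega$, which was dropped from the linear operator, must itself be small. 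It is, because it carries the factor $|\ln\varepsilon|^{-1}$.

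Combining Steps 1 and 2 shows $\hat{\mathbf T}(M_1\times M_2)\subset M_1\times M_2$.

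\textbf{Step 3: contraction.} We take differences at $(\omega_1,\zeta_1)$ and $(\omega_2,\zeta_2)$.
\begin{itemize}
\item \emph{Interior.} The argument of \eqref{neq4-43a}--\eqref{21-3-9} without the $\psi_{03}$ term gives a Lipschitz constant $O(|\ln\varepsilon|^{-\sigma_1})$ after the $|\ln\varepsilon|^{p-1}$ weighting.
\item \emph{Boundary.} Every term of $\Theta$ is at least quadratic in $(\omega,\zeta)$ or carries the factor $\partial_{y_2}\mathcal U=O(|\ln\varepsilon|^{-1})$. Its Lipschitz constant in $C^\beta$ is therefore $O(|\ln\varepsilon|^{-1}+|\ln\varepsilon|^{-1-\sigma})$, measured against $\|\omega_1-\omega_2\|_{C^{2,\beta}}$ away from the vortices and $\|\zeta_1-\zeta_2\|_{C^{2,\beta}}$.
\end{itemize}
Proposition~\ref{p1-4-9} then makes $\hat{\mathbf T}$ a contraction, and the fixed point satisfies the stated bound.

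\textbf{Main obstacle.} The most delicate step is Step 2. The nonlinearity contains $\partial^2_{y_1}\zeta$, so its difference must be placed in $C^\beta$ using the $C^{2,\beta}$ norm of $\zeta$, with no derivative lost. Proposition~\ref{p1-4-9} measures $f_1$ only in $C^\beta$ precisely so that this closes. One must still check that the harmonic extension of $\zeta$ and the trace of $\omega$ on $y_2=\pi$ are controlled in $C^{1,\beta}$ by the $M_1\times M_2$ norms. For $\zeta$ this follows from Schauder estimates for the harmonic Dirichlet problem on the strip. For $\omega$ it follows because $\omega$ is harmonic outside $\cup_j B_{Ls_{\varepsilon,j}}(x_j)$.
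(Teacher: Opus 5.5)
Your proposal is correct and is essentially the paper's argument: the paper proves Proposition~\ref{p10-4-9} only by saying it is ``similar to Proposition~\ref{pro4-4}'', i.e.\ a contraction for $\hat{\mathbf T}$ on $M_1\times M_2$ via the linear estimate of Proposition~\ref{p1-4-9} with $\gamma=0$, and your Steps 1--3 supply those estimates. Your observation that the coupling term $\partial_{y_2}\mathcal U\,\partial_{y_2}\omega$, dropped from the linear operator, costs only a factor $|\ln\varepsilon|^{-1}$ is the same justification the paper gives just before \eqref{1-4-9}.
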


\section{existence of solutions}

In Section~5, we prove the existence of $\omega_{\varepsilon,\mathbf{x}} \in E_{\varepsilon,\mathbf{x}} $
and $\Gamma_{\varepsilon,\mathbf{x}}$, such that

\begin{equation}\label{eq4-1-1}
\begin{split}
-&\Delta (\mathcal{U}_{\varepsilon,\mathbf{x},\mathbf{a}}+\omega_{\varepsilon,\mathbf{x}})-
\frac{1}{\varepsilon^2}\sum\limits_{j=1}^k1_{B_\delta(x_{0,j})}
 |1+\Gamma_{\varepsilon,\mathbf{x}}'|^2f_j\big(\mathcal{U}_{\varepsilon,\mathbf{x},\mathbf{a}}+\psi_0
 +\omega_{\varepsilon,\mathbf{x}}\big)
\\
&=\sum\limits_{j=1}^k\sum\limits_{h=1}^2\varepsilon^2b_{jh}Z_{\varepsilon,x_j,h},
\end{split}
\end{equation}

In this section, we will choose $\mathbf{x}$ suitably such that the corresponding $b_{jh}$ is zero for $j=1,\cdots,k$,\,\, $h=1,2$.  We will prove the following result.

\begin{theorem}\label{th1-4-9}

Suppose that $\tilde{\mathcal K}_p(\mathbf{\hat x})$ has a non-degenerate critical point
$\mathbf{\hat x}_0$, where $\mathbf{\hat x}= (\hat x_1,\cdots, \hat x_k)$,
 with $\hat x_{j_0}=(0, \hat x_{j_02})$,
  then there exists $\mathbf{\hat x}_{\ep}$, such that
  $b_{jh}$ is zero for $j=1,\cdots,k$,\,\, $h=1,2$. Moreover, $\mathbf{\hat x}_{\ep}\to
  \mathbf{\hat x}_0
  $ as $\ep\to 0$.

\end{theorem}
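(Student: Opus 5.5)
The plan is the standard finite-dimensional reduction. I will compute the Lagrange multipliers $b_{jh}$ from the equation \eqref{eq4-1-1}, show they are a small $C^0$ perturbation of $\nabla\tilde{\mathcal K}_p$ after normalization, and then apply a degree or implicit-function argument on the translation-reduced configuration space.

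\textbf{Step 1: testing the equation.} Multiply \eqref{eq4-1-1} by $V_{\varepsilon,\mathbf x,i,l}=\partial_{x_{il}}PU_{\varepsilon,x_i,a_{\varepsilon,i}}$ and integrate over $D_L$. By \eqref{eq4-13}, the right-hand side equals $\frac{c}{|\ln\varepsilon|^{p-1}}\,b_{il}+O\bigl(\frac{s_\varepsilon}{|\ln\varepsilon|^{p-1}}\bigr)\sum|b_{jh}|$. This matrix is invertible, so $\mathbf b=0$ is equivalent to the vanishing of
\[
W_{il}(\mathbf x):=\int_{D_L}\Bigl[-\Delta(\mathcal U_{\varepsilon,\mathbf x,\mathbf a}+\omega_{\varepsilon,\mathbf x})-\frac1{\varepsilon^2}\sum_j 1_{B_\delta(x_{0,j})}|1+\Gamma'_{\varepsilon,\mathbf x}|^2f_j(\cdots)\Bigr]V_{\varepsilon,\mathbf x,i,l}
\]
for all $i,l$. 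Rather than expanding $W_{il}$ pointwise, I will use the local Pohozaev identity \eqref{6-26-8} on $B_\delta(x_i)$. Outside the cores, $u_\varepsilon=\mathcal U+\omega$ is harmonic up to the $b$-terms. Moreover, Lemma~\ref{l1-8-5}, \eqref{12-8-5} and the $C^{2,\beta}$ bound on $\omega$ away from the balls in Proposition~\ref{pro4-4} give
\[
u_\varepsilon=-\sum_j\frac{2\pi a_{\varepsilon,j}(-1)^{\tau_j}}{\ln s_{\varepsilon,j}}G_p(\cdot,x_j)+O\bigl(|\ln\varepsilon|^{-1-\sigma}\bigr)\quad\text{on }\partial B_\delta(x_i).
\]
The boundary integrals in \eqref{6-26-8} therefore become, to leading order, $\frac{(2\pi)^2}{|\ln\varepsilon|^2}$ times the derivative of the Kirchhoff--Routh interaction. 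The background term $\varepsilon^{-2}\int f_i\,\partial_l\psi_0$ becomes $\frac{2\pi\kappa_i}{|\ln\varepsilon|}\partial_l\tilde\psi(x_i)$ plus errors, where I use $a_{\varepsilon,i}\to\kappa_i\mp\tilde\psi(x_i)$ from \eqref{nneq3-10}.

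\textbf{Step 2: normalization and the $C^0$ expansion.} This is where I expect the main obstacle. The vortex interaction enters at order $|\ln\varepsilon|^{-2}$, while the background-flow gradient enters at order $|\ln\varepsilon|^{-1}$, so a naive expansion is not a perturbation of $\nabla\tilde{\mathcal K}_p$. The weight of $\tilde\psi$ in $\tilde{\mathcal K}_p$ must be matched to the circulation scale. I would first try to treat $\gamma$ as scaled with $\varepsilon$, say $\gamma\sim|\ln\varepsilon|^{-1}$ along the admissible nonresonant set. Alternatively, I would use the fact that $\partial_{y_1}\psi_{01}=0$ and $\partial_{y_2}\psi_{01}(x)=0$ at $x_2=\pi/2$, which leaves only the $\psi_{02}(\Gamma_0)$ contribution. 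That contribution is itself $O(|\gamma|\,|\ln\varepsilon|^{-1})$, because $\Gamma_0=O(|\ln\varepsilon|^{-1})$ by \eqref{8-29-8}. With this in place, both contributions sit at order $|\ln\varepsilon|^{-2}$, and I aim for
\[
|\ln\varepsilon|^{2}W_{il}(\mathbf x)=c_0\,\partial_{x_{il}}\tilde{\mathcal K}_p(\mathbf x)+o(1)
\]
uniformly for $\mathbf x\in\prod_j B_\delta(x_{0,j})$. The $o(1)$ error collects three sources:
\begin{itemize}
\item the $\omega$ error, of order $|\ln\varepsilon|^{-\sigma}$ after normalization by Proposition~\ref{pro4-4};
\item the $\zeta$ and $\psi_{03}$ errors, controlled by Lemma~\ref{l1-29-8};
\item the $\ln|\ln\varepsilon|/|\ln\varepsilon|$ corrections from \eqref{eq3-14}.
\end{itemize}

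\textbf{Step 3: translation and the degree argument.} By Remark~\ref{re2-15-5}, $\omega_{\varepsilon,\mathbf x}$ and $\zeta_{\varepsilon,\mathbf x}$ are equivariant under $\mathbf x\mapsto\mathbf x+ce$. Hence $W(\mathbf x+ce)=W(\mathbf x)$, and the quantity $\sum_iW_{i1}$ equals (approximately) the derivative along the translation. I would show $\sum_i W_{i1}\equiv0$ exactly, using the Pohozaev identity on the whole periodic cell: integrating the $y_1$-momentum flux over $\partial D_L$, the periodic sides cancel and the horizontal boundaries contribute nothing. The $b_{jh}$ are then determined by $2k-1$ equations after fixing $\hat x_{j_0,1}=0$.

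On this slice, nondegeneracy of $\hat{\mathbf x}_0$ for the reduced Hessian gives a nonzero Brouwer degree of $\nabla\tilde{\mathcal K}_p$ on a small ball $B_r(\hat{\mathbf x}_0)$. Since the perturbation is uniformly $o(1)$ on $\partial B_r$, the degree persists, which yields a zero $\hat{\mathbf x}_\varepsilon\in B_r$. As $r$ can be taken arbitrarily small, $\hat{\mathbf x}_\varepsilon\to\hat{\mathbf x}_0$. Then $b_{il}=0$ for all $(i,l)\neq(j_0,1)$, and the remaining multiplier vanishes by the exact translation identity together with the invertibility of the matrix in \eqref{eq4-13}. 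Continuity of $\mathbf x\mapsto(\omega_{\varepsilon,\mathbf x},\zeta_{\varepsilon,\mathbf x})$, which follows from the uniqueness in the contraction argument, suffices for the degree argument, so $C^1$ control is not needed.
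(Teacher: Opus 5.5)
The gap is in Step~2, and Step~3 depends on it. Your degree argument needs $|\ln\varepsilon|^{2}W_{il}=c_0\,\partial_{x_{il}}\tilde{\mathcal K}_p+o(1)$ uniformly on $\partial B_r(\hat{\mathbf x}_0)$, and neither of your remedies gives this. Your order count is right: the background term is $\varepsilon^{-2}\int f_i\,\partial_{y_l}\psi_0\approx\frac{2\pi a_{\varepsilon,i}}{|\ln\varepsilon|}\,\partial_{y_l}\psi_0(x_i)$, a factor $|\ln\varepsilon|$ larger than the vortex interaction.

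Remedy (b) only holds on the midline. Since $\partial_{y_2}\psi_{01}(x_i)=-\gamma\bigl(x_{i2}-\frac\pi2\bigr)$, after multiplying by $|\ln\varepsilon|^2$ this term has size $|\gamma|\,|\ln\varepsilon|\,r$ on $\partial B_r(\hat{\mathbf x}_0)$, which is not $o(1)$. You cannot restrict to the midline either, because $\Gamma$ and $\psi_{02}=\gamma(\pi-y_2)\Gamma_2$ break the symmetry $y_2\mapsto\pi-y_2$, so the vertical equations must actually be solved.

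Remedy (a), $\gamma\sim|\ln\varepsilon|^{-1}$, leaves the linear theory you quote. Propositions~\ref{pro4-2}, \ref{pro4-4} and Appendix~A are proved for a \emph{fixed} nonresonant $\gamma$, with $\gamma$-dependent constants. The derivative gain in \eqref{22-21-8} has coefficient $\pi^2\gamma^2/4$, and the resonant values $\gamma_n$ accumulate at $0$ with gaps of order $\gamma^3$, so all those bounds would have to be redone quantitatively.

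Even at the right order, $\tilde\psi$ depends on $\mathbf x$ through $\Gamma_0$ at the same scale. The Pohozaev identity only sees $\nabla_y\tilde\psi(x_i)$ with $\Gamma_0$ frozen, so the limit field is not literally $\nabla\tilde{\mathcal K}_p$.

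You cannot close this by quoting the paper. The paper obtains the expansion via Propositions~\ref{t1-15-5} and \ref{p2-22-5}, but \eqref{8-21-5} records the background term at order $\varepsilon^2|\ln\varepsilon|^{-2}$. For fixed $\gamma$, your count shows this is off by a factor $|\ln\varepsilon|$, consistent with the remark closing Appendix~\ref{subsec:B2-periodic-KR} that the background coefficient becomes $|\ln\varepsilon|/\pi$. With $\gamma$ fixed, the actual limit problem pins the vertical positions within $O(|\ln\varepsilon|^{-1})$ of $\pi/2$, and its non-degeneracy condition is not the hypothesis of the statement.

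The exact translation identity in Step~3 is also not justified as stated. In the flat cell, testing \eqref{eq4-1-1} with $\partial_{y_1}u$ does kill the $-\Delta u$ term, since the periodic sides cancel and $\partial_{y_1}u=0$ on $y_2=0,\pi$. But the nonlinear term leaves $\varepsilon^{-2}\sum_j\int\bigl(F_j\,\partial_{y_1}|1+\Gamma'|^2+|1+\Gamma'|^2f_j\,\partial_{y_1}\psi_0\bigr)$, evaluated at $u+\psi_0$. Its second piece has size $|\gamma|\,|\ln\varepsilon|^{-2}$, comparable to the quantities you want to annihilate, and nothing forces it to vanish.

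The exact conservation law is the physical horizontal-momentum balance, tested against $(\partial_{x_1}\Psi)\circ(\mathrm{id}+\Gamma)$. There the vorticity term is a divergence whose flux vanishes because $\Psi=0$ on $\partial\Omega$. The free-surface term $-\frac12\int|\nabla\Psi|^2n_1\,ds$ vanishes only because the Bernoulli equation is solved exactly by the fixed point (no multiplier there) and because $\int_0^L\eta'=\int_0^L\eta\eta'=0$.

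This gives one exact relation $\sum_{j,h}c_{jh}b_{jh}=0$. It is not $\sum_iW_{i1}=0$, since $W$ tests against $V_{i1}$. You must still check that it is transversal to your $2k-1$ equations; it is, since its coefficient on $b_{j_01}$ is nonzero at leading order. The paper's appeal to Remark~\ref{re2-15-5} does not address this point either.

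By contrast, the analogous test-function mismatch in Step~1 is harmless. There $W_{il}$ uses $V_{il}$ while the local Pohozaev identity uses $\partial_{y_l}u$. Integrating by parts, $\int Z\,\partial_{y_l}\omega=-\int\partial_{y_l}Z\,\omega$ is smaller than the main term by a factor $|\ln\varepsilon|\,\|\omega\|_{L^\infty}\le|\ln\varepsilon|^{-\sigma}$.
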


To prove Theorem~\ref{th1-4-9}, we need the following result.

\begin{proposition}\label{t1-15-5}
If $\mathbf{x}$ is a solution of the following equation

\begin{equation}\label{20-27-8}
\begin{split}
&\int_{D_L}\Bigl( \nabla (\mathcal{U}_{\varepsilon,\mathbf{x},\mathbf{a}}+\omega_{\varepsilon,\mathbf{x}})
\nabla ( \frac{\partial \mathcal{U}_{\varepsilon,\mathbf{x},\mathbf{a}} }{\partial x_{ml}} +
\frac{\partial  \omega_{\varepsilon,\mathbf{x}}
}{\partial x_{ml}} )\\
&-
\frac{1}{\varepsilon^2}\sum\limits_{j=1}^k1_{B_\delta(x_{0,j})}
\int_{D_L}\, |1+\Gamma'|^2f_j\big(\mathcal{U}_{\varepsilon,\mathbf{x},\mathbf{a}}
 +\psi_0+\omega_{\varepsilon,\mathbf{x}}\big)\bigl(\frac{\partial \mathcal{U}_{\varepsilon,\mathbf{x},\mathbf{a}} }{\partial x_{ml}} +
\frac{\partial  \omega_{\varepsilon,\mathbf{x}}
}{\partial x_{ml}} \bigr)\\
=&0,\quad m=1,\cdots,k, \; l=1,2
\end{split}
\end{equation}
 then $b_{jh}=0$, $j=1,\cdots,k$, $h=1,2$.
\end{proposition}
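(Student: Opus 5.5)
The argument is the standard one for a Lyapunov--Schmidt reduction: test the equation \eqref{eq4-1-1} against the derivatives of the full approximate solution and show that the resulting matrix is invertible. Write $u_{\mathbf x}=\mathcal U_{\varepsilon,\mathbf x,\mathbf a}+\omega_{\varepsilon,\mathbf x}$. Multiply \eqref{eq4-1-1} by $\partial_{x_{ml}}u_{\mathbf x}$ and integrate over $D_L$. Since $u_{\mathbf x}$ and its $\mathbf x$-derivatives vanish on $y_2=0,\pi$ and are $L$-periodic in $y_1$, integrating by parts turns the left-hand side into exactly the left-hand side of \eqref{20-27-8}. The hypothesis of the proposition therefore gives
\[
\sum_{j=1}^k\sum_{h=1}^2 b_{jh}\,\varepsilon^2\int_{D_L} Z_{\varepsilon,x_j,h}\Bigl(\frac{\partial \mathcal U_{\varepsilon,\mathbf x,\mathbf a}}{\partial x_{ml}}+\frac{\partial \omega_{\varepsilon,\mathbf x}}{\partial x_{ml}}\Bigr)=0,\qquad m=1,\dots,k,\ l=1,2 .
\]
This is a homogeneous $2k\times 2k$ linear system for $(b_{jh})$. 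It suffices to show that its matrix is a small perturbation of a nonsingular diagonal matrix.

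\textbf{Step 1: the leading part.} The $\mathcal U$-part of the matrix is
\[
\varepsilon^2\int Z_{\varepsilon,x_j,h}\,\partial_{x_{ml}}\mathcal U_{\varepsilon,\mathbf x,\mathbf a}
=\sum_i (-1)^{\tau_i}\varepsilon^2\int Z_{\varepsilon,x_j,h}\,\partial_{x_{ml}}PU_{\varepsilon,x_i,a_{\varepsilon,i}} .
\]
For $i=m$ this is $\pm\varepsilon^2\int Z_{j,h}V_{m,l}$, which by \eqref{eq4-13} equals $\pm\delta_{jm}\delta_{hl}c|\ln\varepsilon|^{1-p}+O(s_\varepsilon|\ln\varepsilon|^{1-p})$. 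For $i\ne m$ the derivative $\partial_{x_{ml}}PU_{\varepsilon,x_i,a_{\varepsilon,i}}$ comes only through $a_{\varepsilon,i}(\mathbf x)$. By \eqref{eq3-14} it is $O(|\ln\varepsilon|^{-2})$ in $L^\infty$. It is also almost constant on $B_{s_{\varepsilon,j}}(x_j)$, and the $Z_{j,h}$ are odd to leading order, so this contribution is also $o(|\ln\varepsilon|^{1-p})$. The $\mathcal U$-part is thus $|\ln\varepsilon|^{1-p}\bigl(\mathrm{diag}(\pm c)+o(1)\bigr)$.

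\textbf{Step 2: the $\omega$-part (main obstacle).} We must show that $\varepsilon^2\int Z_{\varepsilon,x_j,h}\,\partial_{x_{ml}}\omega_{\varepsilon,\mathbf x}=o(|\ln\varepsilon|^{1-p})$. The obstacle is that we do not know a priori that $\omega_{\varepsilon,\mathbf x}$ is differentiable in $\mathbf x$ with good bounds. I would avoid differentiating $\omega$ by using the orthogonality $\int Z_{\varepsilon,x_j,h}\,\omega_{\varepsilon,\mathbf x}=0$, valid for all $\mathbf x$ since $\omega_{\varepsilon,\mathbf x}\in E_{\varepsilon,\mathbf x}$. Differentiating it gives
\[
\int Z_{j,h}\,\partial_{x_{ml}}\omega=-\int \partial_{x_{ml}}Z_{j,h}\,\omega .
\]
Now $\varepsilon^2 Z_{j,h}=-p(U-a)_+^{p-1}(\partial_{x_{jh}}U-\partial_{x_{jh}}a)$, and this is supported in $B_{s_{\varepsilon,j}}(x_j)$. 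Its $x_{ml}$-derivative has $L^1$ norm $O(s_\varepsilon^{-1}|\ln\varepsilon|^{1-p}\cdot s_\varepsilon)$, up to boundary terms from differentiating $(\cdot)_+^{p-1}$, which have the same order. Hence
\[
\Bigl|\varepsilon^2\int \partial_{x_{ml}}Z_{j,h}\,\omega\Bigr|\le C|\ln\varepsilon|^{1-p}\|\omega\|_{L^\infty}\le C|\ln\varepsilon|^{-p-\sigma}
\]
by Proposition~\ref{pro4-4} (respectively Proposition~\ref{p10-4-9}), and this is indeed $o(|\ln\varepsilon|^{1-p})$. The differentiability of $\mathbf x\mapsto\omega_{\varepsilon,\mathbf x}$ itself would be obtained from the implicit function theorem applied to the fixed-point problem \eqref{1-24-1}, using the invertibility in Proposition~\ref{pro1-10-5}. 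The delicate points are the smoothness of the cut-off nonlinearity when $p$ is close to $1$, and the $\zeta$-coupling, which also enters only through $\omega$ and $\Gamma$.

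\textbf{Step 3: conclusion.} The system matrix equals $|\ln\varepsilon|^{1-p}\bigl(\mathrm{diag}(\pm c)+o(1)\bigr)$, which is invertible for small $\varepsilon$, so all $b_{jh}=0$. The case $\alpha>0$ is identical, using Proposition~\ref{p10-4-9} with $\gamma=0$.
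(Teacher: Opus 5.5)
Your proof is correct and follows the paper's argument. You test \eqref{eq4-1-1} against $\partial_{x_{ml}}u_{\mathbf x}$, trade $\int Z_{\varepsilon,\mathbf x,j,h}\,\partial_{x_{ml}}\omega_{\varepsilon,\mathbf x}$ for $-\int \partial_{x_{ml}}Z_{\varepsilon,\mathbf x,j,h}\,\omega_{\varepsilon,\mathbf x}$ via the differentiated orthogonality condition, and invert a diagonally dominant $2k\times 2k$ system. Your handling of the cross terms coming from $a_{\varepsilon,i}(\mathbf x)$ fills in a step the paper leaves implicit.

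One bookkeeping correction is needed. The diagonal entries are of order $|\ln\varepsilon|^{-p-1}$, as in \eqref{35-15-5}; the exponent in \eqref{eq4-13} is a misprint. Also $\|\partial_{x_{ml}}(\varepsilon^2 Z_{\varepsilon,\mathbf x,j,h})\|_{L^1}=O(|\ln\varepsilon|^{-p})$, not $O(|\ln\varepsilon|^{1-p})$. So the $\omega$-contribution is smaller than the diagonal only by the factor $|\ln\varepsilon|\,\|\omega_{\varepsilon,\mathbf x}\|_{L^\infty}=O(|\ln\varepsilon|^{-\sigma})$. This is why you need the full bound $\|\omega_{\varepsilon,\mathbf x}\|_{L^\infty}\le C|\ln\varepsilon|^{-1-\sigma}$ from Proposition~\ref{pro4-4}, and not just smallness of $\omega$.
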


\begin{proof}

If $\mathbf{x}$ is a solution of \eqref{20-27-8}, then using \eqref{eq4-1-1}, we obtain

\begin{equation}\label{30-15-5}
\begin{split}
\sum\limits_{j=1}^k\sum\limits_{h=1}^2\varepsilon^2b_{jh}
\int_{D_L}\,Z_{\varepsilon,x_j,h}
\bigl(\frac{\partial \mathcal{U}_{\varepsilon,\mathbf{x},\mathbf{a}} }{\partial x_{ml}} +
\frac{\partial  \omega_{\varepsilon,\mathbf{x}}
}{\partial x_{ml}} \bigr)=0.
\end{split}
\end{equation}

From

\[
\int_{D_L}\, Z_{\varepsilon,\mathbf{x},j,h}\omega=0,  \quad \forall\; \mathbf x,
\]
we  find
\[
\int_{D_L}\, Z_{\varepsilon,\mathbf{x},j,h}
\frac{\partial  \omega_{\varepsilon,\mathbf{x}}
}{\partial x_{ml}} =-\int_{D_L}\,
\frac{\partial  Z_{\varepsilon,\mathbf{x},j,h}
}{\partial x_{ml}} \omega_{\varepsilon,\mathbf{x}}.
\]
Thus, we have

\begin{equation}\label{34-15-5}
\sum\limits_{j=1}^k\sum\limits_{h=1}^2\varepsilon^2b_{jh}
\int_{D_L}\,\Bigl(Z_{\varepsilon,x_j,h}
\frac{\partial \mathcal{U}_{\varepsilon,\mathbf{x},\mathbf{a}} }
{\partial x_{ml}} +
\frac{\partial Z_{\varepsilon,x_j,h} }
{\partial x_{ml}}  \omega_{\varepsilon,\mathbf{x}}\Bigr)=0.
\end{equation}

In view of

\begin{equation}\label{35-15-5}
\begin{split}
\varepsilon^2\int_{D_L}\,Z_{\varepsilon,\mathbf{x},j,h}V_{\varepsilon,\mathbf{x},
i,l}=&p\int_{D_L}\, (U_{\varepsilon,x_j,a_{\varepsilon,j}}-a_{\varepsilon,j})_+^{p-1}(\frac{\partial U_{\varepsilon,x_j,a_{\varepsilon,j}}}{\partial x_{jh}}-\frac{\partial a_{\varepsilon,j}}{\partial x_{jh}})V_{\varepsilon,\mathbf{x},i,l}
\\
=&\delta_{ij}\delta_{hl}\frac{c}{|\ln\varepsilon|^{p+1}}+O(\frac{s_{\varepsilon,1}}{|\ln \varepsilon|^{p+1}}),
\end{split}
\end{equation}
where $c>0$ is a constant, $\delta_{ij}=1$ if $i=j$, $\delta_{ij}=0$ if $i\neq j$,
and

\begin{equation}\label{36-15-5}
\begin{split}
&\varepsilon^2\int_{D_L}\,\frac{\partial Z_{\varepsilon,x_j,h} }
{\partial x_{ml}}  \omega_{\varepsilon,\mathbf{x}}\\
=&p\int_{D_L}\, (U_{\varepsilon,x_j,a_{\varepsilon,j}}-a_{\varepsilon,j})_+^{p-1}
\frac{\partial^2 [ (U_{\varepsilon,x_j,a_{\varepsilon,j}}-a_{\varepsilon,j})_+] }{\partial x_{jh}\partial x_{ml}}\omega_{\varepsilon,\mathbf{x}}
\\
&+p(p-1)\int_{D_L}\, (U_{\varepsilon,x_j,a_{\varepsilon,j}}-a_{\varepsilon,j})_+^{p-2}
\frac{\partial [ (U_{\varepsilon,x_j,a_{\varepsilon,j}}-a_{\varepsilon,j})_+] }{\partial x_{jh}}
\frac{\partial [ (U_{\varepsilon,x_j,a_{\varepsilon,j}}-a_{\varepsilon,j})_+] }{\partial x_{ml}}\omega_{\varepsilon,\mathbf{x}}
\\
=&o\bigl(\frac{1}{|\ln\varepsilon|^{p+1}}\bigr),
\end{split}
\end{equation}
we can solve \eqref{34-15-5} to get  $b_{jh}=0$.

\end{proof}

To solve \eqref{20-27-8}, we will use
 Remark~\ref{re2-15-5}.  Fix $j_0$.  We let  $\mathbf{\hat x}= (\hat x_1,\cdots, \hat x_k)$,
 with $\hat x_{j_0}=(0, \hat x_{j_02})$.
 Suppose that $\mathbf{\hat x}$ is a solution of the following equation

\begin{equation}\label{21-27-8}
\begin{split}
&\int_{D_L}\Bigl( \nabla (\mathcal{U}_{\varepsilon,\hat{\mathbf{x}},\mathbf{a}}+\omega_{\varepsilon,\hat{\mathbf{x}}})
\nabla ( \frac{\partial \mathcal{U}_{\varepsilon,\hat{\mathbf{x}},\mathbf{a}} }{\partial x_{ml}} +
\frac{\partial  \omega_{\varepsilon,\hat{\mathbf{x}}}
}{\partial x_{ml}} )\\
&-
\frac{1}{\varepsilon^2}\sum\limits_{j=1}^k1_{B_\delta(x_{0,j})}
\int_{D_L}\, |1+\Gamma'|^2f_j\big(\mathcal{U}_{\varepsilon,\hat{\mathbf{x}},\mathbf{a}}
+\psi_0 +\omega_{\varepsilon,\hat{\mathbf{x}}}\big)\bigl(\frac{\partial \mathcal{U}_{\varepsilon,\hat{\mathbf{x}},\mathbf{a}} }{\partial x_{ml}} +
\frac{\partial  \omega_{\varepsilon,\hat{\mathbf{x}}}
}{\partial x_{ml}} \bigr)\\
=&0,
\end{split}
\end{equation}
for $m\ne j_0$, $l=1, 2$  and $m=j_0,$, $l=2$. Then for $\mathbf{ x}= ( x_1,\cdots,  x_k)$,
satisfying $x_{j1}-x_{j_0,1}=\hat x_{j1}$, $\mathbf{ \bar x}= (\bar x_1,\cdots, \bar x_k)$
with $\bar x_j=(x_{j1}-x_{j_01}, x_{j2})$ is a solution  of \eqref{20-27-8}.
By Remark~\ref{re2-15-5},  $\mathbf{x}$ is a solution  of \eqref{20-27-8}.

We now solve \eqref{21-27-8}. For this purpose, we need to expand the left hand side
of \eqref{21-27-8} to find the main term.  This is not easy by direct computations, 
because the term $\frac{\partial \omega}{\partial x_{jh}}$ is not small in $D_L$,
though it is small away from the concentration points.  We need to proceed indirectly
by using the equation \eqref{eq4-1-1}.

\begin{proposition}\label{p2-22-5}
We have

\[
\begin{split}
\text{LHS of \eqref{21-27-8}}
=\frac{2\pi^2}{|\ln\varepsilon|^2}\frac{\partial \tilde{\mathcal K}_p(\mathbf{\hat x}) }{\partial  x_{ml} }
+o\bigl(\frac1{|\ln \ep|^{2}}\bigr).
\end{split}
\]

\end{proposition}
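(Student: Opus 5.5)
The plan is to avoid differentiating $\omega_{\varepsilon,\hat{\mathbf x}}$ in $\mathbf x$ directly. Instead I will use the equation \eqref{eq4-1-1} satisfied by $u_\varepsilon:=\mathcal U_{\varepsilon,\hat{\mathbf x},\mathbf a}+\omega_{\varepsilon,\hat{\mathbf x}}$ and convert the left-hand side of \eqref{21-27-8} into local Pohozaev-type boundary integrals on $\partial B_\delta(x_m)$.

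\textbf{Step 1: integrate by parts.} Writing the integrand as $-\Delta u_\varepsilon-\varepsilon^{-2}|1+\Gamma'|^2\sum_j 1_{B_\delta(x_{0,j})}f_j(u_\varepsilon+\psi_0)$ tested against $\partial_{x_{ml}}u_\varepsilon$ shows that the left-hand side of \eqref{21-27-8} equals
\[
\sum_{j,h}\varepsilon^2 b_{jh}\int_{D_L} Z_{\varepsilon,x_j,h}\,\partial_{x_{ml}}u_\varepsilon .
\]
The periodic and Dirichlet boundary terms vanish. This identity alone is circular, so I will use it differently. Since $\partial_{x_{ml}}\mathcal U=-\partial_{y_l}PU_{\varepsilon,x_m,a_m}+O(|\ln\varepsilon|^{-2})$ near $x_m$ by \eqref{eq3-15}, and $\omega$ is translation covariant by Remark~\ref{re2-15-5}, I will replace $\partial_{x_{ml}}u_\varepsilon$ inside $B_\delta(x_m)$ by $-\partial_{y_l}u_\varepsilon$. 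The error of this replacement is controlled by the orthogonality conditions in $E_{\varepsilon,\mathbf x}$ and by the bound \eqref{26-3-9}, in the manner of \eqref{36-15-5}. Then testing the equation against $\partial_{y_l}u_\varepsilon$ on $B_\delta(x_m)$ gives exactly the Pohozaev identity \eqref{6-26-8}. This yields the boundary terms
\[
-\int_{\partial B_\delta(x_m)}\Bigl(\partial_\nu u_\varepsilon\,\partial_{y_l}u_\varepsilon-\tfrac12|\nabla u_\varepsilon|^2\nu_l\Bigr),
\]
together with the background term $-\varepsilon^{-2}\int_{B_\delta(x_m)}f_m(u_\varepsilon+\psi_0)\partial_{y_l}\psi_0$ and a remainder coming from $|1+\Gamma'|^2-1$.

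\textbf{Step 2: evaluate the boundary terms.} On $\partial B_\delta(x_m)$, away from the spikes, Lemma~\ref{l1-8-5}, \eqref{12-8-5} and \eqref{26-3-9} give
\[
u_\varepsilon=-\sum_i(-1)^{\tau_i}\frac{2\pi a_{\varepsilon,i}}{\ln s_{\varepsilon,i}}G_p(\cdot,x_i)+O(|\ln\varepsilon|^{-1-\sigma})
\]
in $C^1$. Since $a_{\varepsilon,i}\to\kappa_i$ and $\ln s_{\varepsilon,i}=\ln\varepsilon(1+o(1))$, the usual computation of the quadratic form on a small circle applies. Splitting $G_p=\frac1{2\pi}\ln\frac1{|y-x_m|}+H_p$ makes the singular–singular part vanish by symmetry. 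The cross terms then give
\[
\frac{4\pi^2}{|\ln\varepsilon|^2}\Bigl(\kappa_m^2\partial_l R_p(x_m)/2-\sum_{i\ne m}(-1)^{\tau_i+\tau_m}\kappa_i\kappa_m\partial_{x_{ml}}G_p(x_m,x_i)\Bigr).
\]
After accounting for the factor $2$ from the symmetric sum in $\mathcal K_p$, this is $\frac{2\pi^2}{|\ln\varepsilon|^2}\partial_{x_{ml}}\mathcal K_p$. The correction of size $|\ln\varepsilon|^{-1-\sigma}$ enters only linearly against $O(|\ln\varepsilon|^{-1})$, so it contributes $o(|\ln\varepsilon|^{-2})$.

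\textbf{Step 3: the background and boundary-coupling terms.} I will use $\varepsilon^{-2}\int f_m=2\pi\kappa_m(-1)^{\tau_m}/|\ln\varepsilon|\,(1+o(1))$, which follows from \eqref{10-5-8}, together with $\psi_0=\tilde\psi+O(|\ln\varepsilon|^{-1-\sigma})$ from Lemma~\ref{l1-29-8} and Proposition~\ref{pro4-4}. With these, the background term equals $\frac{2\pi^2}{|\ln\varepsilon|^2}\cdot(-2(-1)^{\tau_m}\kappa_m\partial_l\tilde\psi(x_m))+o(|\ln\varepsilon|^{-2})$. Here the $|\ln\varepsilon|$ normalisation relating the strength to $\tilde\psi$ must be matched carefully with the definition \eqref{7-26-8}. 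The $|1+\Gamma'|^2-1=O(|\ln\varepsilon|^{-1})$ factor is multiplied by $\int f_m\partial_{y_l}u_\varepsilon$. By radial symmetry of the leading spike this integral is $O(s_\varepsilon)$ plus $O(|\ln\varepsilon|^{-2})$, so that factor is negligible.

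\textbf{Main obstacle.} I expect the hardest step to be Step 1: justifying that replacing $\partial_{x_{ml}}\omega$ by $-\partial_{y_l}\omega$ costs only $o(|\ln\varepsilon|^{-2})$. The derivative $\partial_{x}\omega$ is not small near the spikes. My first attempt is to work only with boundary integrals on $\partial B_\delta$, where $\omega$ is $C^{2,\beta}$-small. I will use the identity that the full integral equals the sum of the $b_{jh}$ terms in Step 1, and that these are in turn equal to the Pohozaev boundary terms, so that no interior control of $\partial_x\omega$ is needed.
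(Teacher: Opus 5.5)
Your route is the paper's. You use \eqref{eq4-1-1} to turn the left-hand side of \eqref{21-27-8} into the $b_{jh}$-terms, trade $\partial_{x_{ml}}$ for $-\partial_{y_l}$ on the support of $Z_{\varepsilon,\mathbf x,m,h}$, test the equation against $\partial_{y_l}u_\varepsilon$ on $B_\delta(x_m)$, and expand the Pohozaev boundary integrals with $G_p$. Remark~\ref{re2-15-5} cannot do the trade, because it only concerns a common shift of all the $x_j$. The paper instead differentiates $\int_{D_L}Z_{\varepsilon,\mathbf x,j,h}\,\omega_{\varepsilon,\mathbf x}=0$ in $x_{ml}$. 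It then uses $\partial_{x_{ml}}Z_{\varepsilon,\mathbf x,m,h}=-\partial_{y_l}Z_{\varepsilon,\mathbf x,m,h}$ up to terms from $\partial_{x_{ml}}a_{\varepsilon,m}=O(|\ln\varepsilon|^{-1})$, and integrates by parts back. Only $\|\omega_{\varepsilon,\mathbf x}\|_{L^\infty}$ then enters, so your ``main obstacle'' disappears.

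\textbf{The gap is in Step~3.} By \eqref{10-5-8}, $\varepsilon^{-2}\int_{B_\delta(x_m)}f_m(u_\varepsilon+\psi_0)$ equals $(-1)^{\tau_m}2\pi a_{\varepsilon,m}/|\ln s_{\varepsilon,m}|$ to leading order. Hence the background term is, to leading order,
\[
\frac{2\pi(-1)^{\tau_m}a_{\varepsilon,m}}{|\ln s_{\varepsilon,m}|}\,\partial_{y_l}\tilde\psi(x_m),
\]
which is of order $|\ln\varepsilon|^{-1}$, not $|\ln\varepsilon|^{-2}$. Your displayed identity is smaller than this by a factor of order $|\ln\varepsilon|^{-1}$, and it does not follow from the two facts you cite. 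Written as $\frac{2\pi^2}{|\ln\varepsilon|^2}\partial_{x_{ml}}(\cdots)$, the background enters with coefficient $|\ln\varepsilon|/\pi$ (with $a_{\varepsilon,j}$ in place of $\kappa_j$), not the $2$ of \eqref{7-26-8}. Careful matching cannot repair this. Off the midline, $\partial_{y_2}\psi_{01}(x_m)=-\gamma(x_{m2}-\frac\pi2)$ is of order one. Even on the midline, the piece $\frac{2\pi a_{\varepsilon,m}}{|\ln s_{\varepsilon,m}|}\nabla\psi_{02}(\Gamma_0)(x_m)$ is in general of order $|\ln\varepsilon|^{-2}$ and is lost.

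\textbf{Two related claims also fail when $\gamma\ne0$.}

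First, by \eqref{2-29-8} and \eqref{nneq3-10}, $a_{\varepsilon,i}\to\kappa_i-(-1)^{\tau_i}\psi_{01}(x_i)$, not $\kappa_i$. So Step~2 produces the Kirchhoff--Routh gradient with these $\mathbf x$-dependent strengths rather than $\partial_{x_{ml}}\mathcal K_p$, an $O(\gamma)$ discrepancy already at leading order.

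Second, the $(|1+\Gamma'|^2-1)$-term is not removed by radial symmetry. Since $F_m(u_\varepsilon+\psi_0)$ vanishes near $\partial B_\delta(x_m)$, one has exactly $\varepsilon^{-2}\int f_m(u_\varepsilon+\psi_0)\partial_{y_l}u_\varepsilon=-\varepsilon^{-2}\int f_m(u_\varepsilon+\psi_0)\partial_{y_l}\psi_0$. This is of order $|\ln\varepsilon|^{-1}$, because the $O(s_\varepsilon)$ tilt of the level sets meets $|\nabla u_\varepsilon|\sim(s_\varepsilon|\ln\varepsilon|)^{-1}$. After multiplying by $|1+\Gamma'|^2-1=O(|\ln\varepsilon|^{-1})$, it is as large as the Kirchhoff--Routh term whenever $\nabla\tilde\psi(x_m)$ is of order one.

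The paper's integration by parts in \eqref{8-21-5} does dispose of $\varepsilon^{-2}\int F_m\,\partial_{y_l}|1+\Gamma'|^2=O(|\ln\varepsilon|^{-3})$. Otherwise the paper's proof has exactly your defect: \eqref{8-21-5} is inconsistent with \eqref{10-5-8}, and \eqref{10-21-5} silently replaces $a_{\varepsilon,j}$ by $\kappa_j$. The closing paragraph of Appendix~\ref{subsec:B2-periodic-KR} concedes that the background coefficient becomes $|\ln\varepsilon|/\pi$ and that the free-boundary correction must be added.

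So for fixed $\gamma\ne0$ neither your argument nor the paper's yields the statement as written. Your Steps~1--2 do give it when $\gamma=0$ (the setting of the case $\alpha>0$), where $a_{\varepsilon,i}\to\kappa_i$ and there is no background term. For $\gamma\ne0$ they give an expansion with a correspondingly renormalized reduced function.
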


\begin{proof}

Using \eqref{eq4-1-1}, we obtain

\begin{equation}\label{1-21-5}
\begin{split}
\text{LHS of \eqref{21-27-8}}=&
\sum\limits_{j=1}^k\sum\limits_{h=1}^2\varepsilon^2b_{jh}\int_{D_L}Z_{\varepsilon,x_j,h}
\frac{\partial (\mathcal{U}_{\varepsilon,\mathbf{x},
\mathbf{a}}+\omega_{\varepsilon,\mathbf{x}})}{\partial  x_{ml} }.
\end{split}
\end{equation}

 We have

 \[
 \begin{split}
& \frac{\partial \mathcal{U}_{\varepsilon,\mathbf{x},
\mathbf{a}}}{\partial  x_{ml} }=\delta_{jm} (-1)^{\tau_m}
\frac{\partial PU_{\varepsilon,x_m,a_m}}{\partial  x_{ml} }+O\bigl(\frac1{|\ln \ep|}\bigr)\\
=&\delta_{jm} (-1)^{\tau_m}
\frac{\partial U_{\varepsilon,x_m,a_m}}{\partial  x_{ml} }+O\bigl(\frac1{|\ln \ep|}\bigr)\\
=&
-(-1)^{\tau_m}
\frac{\partial U_{\varepsilon,x_m,a_m}}{\partial  y_{l} }+O\bigl(\frac1{|\ln \ep|}\bigr).
\end{split}
\]
On the other hand,  from

\[
 \int_{D_L}\, Z_{\varepsilon,\mathbf{x},j,h}\omega_{\varepsilon,\mathbf{x}}=0, \quad \forall\; x,
 \]
we obtain

\[
\begin{split}
& \int_{D_L}\, Z_{\varepsilon,\mathbf{x},j,h}\frac{\partial \omega_{\varepsilon,\mathbf{x}}}{\partial  x_{ml} }
 =-\int_{D_L}\,\frac{\partial  Z_{\varepsilon,\mathbf{x},j,h}}{\partial  x_{ml} }
 \omega_{\varepsilon,\mathbf{x}}\\
 =&-\delta_{jm} \int_{D_L}\,\frac{\partial  Z_{\varepsilon,\mathbf{x},m,h}}{\partial  x_{ml} }
 \omega_{\varepsilon,\mathbf{x}}+O\bigl(\frac1{|\ln \ep|}\bigr)\|\omega_{\varepsilon,\mathbf{x}}\|_{
 L^\infty(D_L)}\\
 =&\delta_{jm} \int_{D_L}\,\frac{\partial  Z_{\varepsilon,\mathbf{x},m,h}}{\partial  y_{l} }
 \omega_{\varepsilon,\mathbf{x}}+O\bigl(\frac1{|\ln \ep|}\bigr)\|\omega_{\varepsilon,\mathbf{x}}\|_{
 L^\infty(D_L)}\\
 =&- \int_{D_L}\, Z_{\varepsilon,\mathbf{x},m,h}\frac{\partial \omega_{\varepsilon,\mathbf{x}}}{\partial  y_{l} }
 +O\bigl(\frac1{|\ln \ep|}\bigr)\|\omega_{\varepsilon,\mathbf{x}}\|_{
 L^\infty(D_L)}.
 \end{split}
 \]
So we have proved that

\begin{equation}\label{6-21-5}
\begin{split}
&\text{ RHS of \eqref{1-21-5}}\\
=&
-\sum\limits_{h=1}^2\varepsilon^2b_{mh}\Bigl(\int_{D_L}
 Z_{\varepsilon,\mathbf{x},m,h}\Bigl[\frac{\partial (-1)^{\tau_m} U_{\varepsilon,x_m,a_m}}{\partial  y_{l} }+\frac{\partial \omega_{\varepsilon,\mathbf{x}}}{\partial  y_{l} }\Bigr]+O\bigl(\frac1{|\ln \ep|}\bigr)
\Bigr)\\
=&-\sum\limits_{h=1}^2\varepsilon^2b_{mh}(\int_{B_\delta(x_m)}
 Z_{\varepsilon,\mathbf{x},m,h}\Bigl[\frac{\partial (-1)^{\tau_m} U_{\varepsilon,x_m,a_m}}{\partial  y_{l} }+\frac{\partial \omega_{\varepsilon,\mathbf{x}}}{\partial  y_{l} }\Bigr]+O\bigl(\frac1{|\ln \ep|}\bigr)\ep^2|b_{mh}|\\
 =&-\sum\limits_{h=1}^2\varepsilon^2b_{mh}(\int_{B_\delta(x_m)}
 Z_{\varepsilon,\mathbf{x},m,h}\frac{\partial (
 \mathcal{U}_{\varepsilon,\mathbf{x},
\mathbf{a}}+\omega_{\varepsilon,\mathbf{x}})}{\partial  y_{l} }+O\bigl(\frac1{|\ln \ep|}\bigr)\ep^2|b_{mh}|.
\end{split}
\end{equation}

  Denote $u_{\varepsilon,\mathbf{x}} =\mathcal{U}_{\varepsilon,\mathbf{x},
\mathbf{a}}+\omega_{\varepsilon,\mathbf{x}}$. 
 From \eqref{eq4-1-1}, we find
 
\begin{equation}\label{26-4-9}
\begin{split}
&\int_{B_\delta(x_m)}  \Delta u_{\varepsilon,\mathbf{x}}  \frac{\partial u_{\varepsilon,\mathbf{x}}
}{\partial  y_{l} }-\frac1{\ep^2}
\int_{B_\delta(x_m)}
 |1+\Gamma'|^2f_m(u_{\varepsilon,\mathbf{x}}+\psi_0)\frac{\partial u_{\varepsilon,\mathbf{x}}
}{\partial  y_{l} }
\\
=&\sum\limits_{h=1}^2\varepsilon^2b_{mh}\int_{B_\delta(x_m)}
 Z_{\varepsilon,\mathbf{x},m,h}\frac{\partial (
 \mathcal{U}_{\varepsilon,\mathbf{x},
\mathbf{a}}+\omega_{\varepsilon,\mathbf{x}})}{\partial  y_{l} }
\end{split}
\end{equation}

Then, combining
\eqref{1-21-5}, \eqref{6-21-5}  and \eqref{26-4-9}, we obtain

\begin{equation}\label{7-21-5}
\begin{split}
&\text{LHS of \eqref{21-27-8}}\\
=&\int_{B_\delta(x_m)}  \Delta u_{\varepsilon,\mathbf{x}}  \frac{\partial u_{\varepsilon,\mathbf{x}}
}{\partial  y_{l} }-\frac1{\ep^2}
\int_{B_\delta(x_m)}
 |1+\Gamma'|^2f_m(u_{\varepsilon,\mathbf{x}}+\psi_0)\frac{\partial u_{\varepsilon,\mathbf{x}}
}{\partial  y_{l} }
+O\bigl(\frac1{|\ln \ep|}\bigr)\ep^2|b_{mh}|.
\end{split}
\end{equation}
We also have

\begin{equation}\label{8-21-5}
\begin{split}
&\int_{B_\delta(x_m)}
 |1+\Gamma'|^2f_m(u_{\varepsilon,\mathbf{x}}+\psi_0)\frac{\partial u_{\varepsilon,\mathbf{x}}
}{\partial  y_{l} }\\
=&-\int_{B_\delta(x_m)}
 |1+\Gamma'|^2f_m(u_{\varepsilon,\mathbf{x}}+\psi_0)\frac{\partial \psi_0
}{\partial  y_{l} }-\int_{B_\delta(x_m)} F_m(u_{\varepsilon,\mathbf{x}}+\psi_0) \frac{\partial |1+\Gamma'|^2
}{\partial  y_{l} }
\\
=&- \frac{2\pi^2\ep^2}{|\ln\varepsilon|^2}\frac{\partial \psi_{01}
}{\partial  y_{l} }
+O\bigl(\frac1{|\ln \ep|^{2+\sigma}}\bigr)\ep^2,
\end{split}
\end{equation}
where $F_m(t)=\int_0^t f_m(s)\,ds$.

 For the coefficient $b_{jh}$ in  \eqref{eq4-1-1},
similar to \eqref{eq4-20} and  \eqref{eq4-22}, we can deduce
that

\begin{equation}\label{9-21-5}
b_{jh}= O\bigl( \frac{s_{\ep i}}{\ep^2|\ln s_{\ep, i}|} \bigr).
\end{equation}

Combining \eqref{7-21-5}--\eqref{9-21-5}, we are led to

\begin{equation}\label{nn10-21-5}
\begin{split}
&\text{LHS of \eqref{21-27-8}}
=\int_{B_\delta(x_m)}  \Delta u_{\varepsilon,\mathbf{x}}  \frac{\partial u_{\varepsilon,\mathbf{x}}
}{\partial  y_{l} }+O\bigl(\frac1{|\ln \ep|^{2+\sigma}}\bigr)\\
=& \int_{\partial B_\delta(x_m)} \frac{\partial  u_{\varepsilon,\mathbf{x}} }{\partial \nu}
\frac{\partial  u_{\varepsilon,\mathbf{x}} }{\partial y_l}-\frac12
\int_{\partial B_\delta(x_m)} |\nabla u_{\varepsilon,\mathbf{x}}|^2\nu_j
 -\frac{2\pi^2}{|\ln\varepsilon|^2}\frac{\partial \psi_{01}
}{\partial  y_{l} }
+O\bigl(\frac1{|\ln \ep|^{2+\sigma}}\bigr),
\end{split}
\end{equation}
where $\nu$ is the outward unit normal of $\partial B_\delta(x_m)$.

On the other hand, we have

\[
 u_{\varepsilon,\mathbf{x}} =\sum_{j=1}^k (-1)^{\tau_j}
 \frac{2a\pi}{\ln s_{\varepsilon,j}} G_p(y,x_j)+ O\bigl(\frac1{|\ln \ep|^{1+\sigma}}\bigr),\quad \forall\; \text{in}\;   C^1( \partial B_\delta(x_m)).
 \]
 As a result,

 \begin{equation}\label{10-21-5}
\begin{split}
& \int_{\partial B_\delta(x_m)} \frac{\partial  u_{\varepsilon,\mathbf{x}} }{\partial \nu}
\frac{\partial  u_{\varepsilon,\mathbf{x}} }{\partial y_l}-\frac12
\int_{\partial B_\delta(x_m)} |\nabla u_{\varepsilon,\mathbf{x}}|^2\nu_j
\\
=& \int_{\partial B_\delta(x_m)} \frac{\partial  G^*(y)}{\partial \nu}
\frac{\partial  G^*(y)}{\partial y_l}-\frac12
\int_{\partial B_\delta(x_m)} |\nabla G^*(y)|^2\nu_j
+O\bigl(\frac1{|\ln \ep|^{2+\sigma}}\bigr)\\
=&\frac{2\pi^2}{|\ln\varepsilon|^2}\frac{\partial \mathcal K_p(\mathbf{x}) }{\partial  x_{ml} }
+o\bigl(\frac1{|\ln \ep|^{2}}\bigr).
\end{split}
\end{equation}
where

\[
G^*(y)=\sum_{j=1}^k (-1)^{\tau_j}
 \frac{2a\pi}{\ln s_{\varepsilon,j}} G_p(y,x_j).
 \]
Thus the result follows from \eqref{nn10-21-5} and \eqref{10-21-5}.

 \end{proof}

\begin{proof}[Proof of Theorem~\ref{th1-4-9}]
This is a direct consequence of Propositions~\ref{t1-15-5} and \ref{p2-22-5}.
\end{proof}

\appendix

\section{The nonlocal mixed boundary value problem}

In this section, we study problem \eqref{23-24-8}. Let
\[
\beta = \frac{4(g+\frac\pi2 \gamma^2 L)}{\pi^2\gamma^2}>0.
\]
We consider the following nonlocal mixed boundary value problem
\begin{equation}\label{2-19-6}
\begin{cases}
\Delta w=0,\quad \text{in}\; D_L,\\
\text{$w$ is periodic in $y_1$,}\\
w(y_1,0)=0,\\
\frac{\partial w}{\partial y_2}
-\frac1{L}\int_0^L \frac{\partial w}{\partial y_2}
-\beta w=f,\quad \text{on}\; \{y_2=\pi\}.
\end{cases}
\end{equation}

\subsection{Solvability}

We study the solvability of \eqref{2-19-6}. The method is standard.
We give a brief discussion for completeness.

Since $w$ is periodic in $y_1$, we write
\begin{equation}\label{3-19-6}
w=w_0(y_2)+\sum_{j=1}^{+\infty}
\Bigl(
w_{j,1}(y_2)\cos\frac{j\pi y_1}{L}
+w_{j,2}(y_2)\sin\frac{j\pi y_1}{L}
\Bigr),
\end{equation}
and
\begin{equation}\label{n3-19-6}
f=f_0+\sum_{j=1}^{+\infty}
\Bigl(
f_{j,1}\cos\frac{j\pi y_1}{L}
+f_{j,2}\sin\frac{j\pi y_1}{L}
\Bigr).
\end{equation}
Then
\begin{equation}\label{4-19-6}
w''_0=0,\quad
w''_{j,i}-\bigl(\frac{j\pi}{L}\bigr)^2w_{j,i}=0,
\;\; i=1,2,\;j=1,\cdots,
\end{equation}
\begin{equation}\label{0-19-6}
w_0(0)=0,\quad
\frac{\partial w_0}{\partial y_2}
-\frac1{L}\int_0^L\frac{\partial w_0}{\partial y_2}
-\beta w_0=f_0,
\;\;\text{on}\;\{y_2=\pi\},
\end{equation}
and for $j\ge 1$, $i=1,2$,
\begin{equation}\label{n0-19-6}
w_{j,i}(0)=0,\quad
\frac{\partial w_{j,i}}{\partial y_2}
-\beta w_{j,i}=f_{j,i},
\;\;\text{on}\;\{y_2=\pi\},
\end{equation}

For $j=0$, we have $w_0=a y_2$ and $-\beta a\pi=f_0$.
Then $w_0=-\frac{f_0}{\beta\pi}y_2$.

For $j\ge 1$, from $w_{j,i}(0)=0$ we obtain
\begin{equation}\label{5-19-6}
w_{j,i}=a_{j,i}\sinh\frac{j\pi}{L}y_2,
\end{equation}
and
\begin{equation}\label{5-19-6}
a_{j,i}
\Bigl(
\frac{j\pi}{L}\cosh\frac{j\pi^2}{L}
-\beta\sinh\frac{j\pi^2}{L}
\Bigr)=f_{j,i}.
\end{equation}
Thus, if
\[
\beta\ne\frac{j\pi}{L}\coth\frac{j\pi^2}{L},
\]
then
\[
a_{j,i}
=
\frac{f_{j,i}}
{\frac{j\pi}{L}\cosh\frac{j\pi^2}{L}
-\beta\sinh\frac{j\pi^2}{L}}.
\]

\begin{proposition}\label{p1-25-8}
If $\beta>0$ and
$\beta\ne\frac{j\pi}{L}\coth\frac{j\pi^2}{L}$
for $j\ge 1$, then \eqref{2-19-6} is uniquely solvable.
\end{proposition}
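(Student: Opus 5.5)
We prove Proposition~\ref{p1-25-8} by Fourier decomposition in $y_1$, as set up in \eqref{3-19-6}--\eqref{5-19-6}. The plan has three steps: uniqueness, construction of a candidate solution mode by mode, and convergence of the resulting series.

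\textbf{Uniqueness.} Suppose $w$ solves \eqref{2-19-6} with $f=0$. Each Fourier coefficient of $w$ then satisfies the homogeneous versions of \eqref{0-19-6} and \eqref{n0-19-6}.
\begin{itemize}
\item For the zero mode, $w_0=ay_2$. The nonlocal term cancels the derivative exactly, since $w_0'(\pi)-\frac1L\int_0^L w_0'(\pi)=0$, so the boundary condition reduces to $-\beta a\pi=0$. Since $\beta>0$, this gives $a=0$.
\item For $j\ge1$, the mean of $\cos\frac{j\pi y_1}{L}$ and $\sin\frac{j\pi y_1}{L}$ over a period vanishes, so the nonlocal term does not act on these modes. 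The condition $w_{j,i}(0)=0$ forces $w_{j,i}=a_{j,i}\sinh\frac{j\pi}{L}y_2$. The boundary condition then becomes
\[
a_{j,i}\Bigl(\tfrac{j\pi}{L}\cosh\tfrac{j\pi^2}{L}-\beta\sinh\tfrac{j\pi^2}{L}\Bigr)=0 .
\]
The nonresonance hypothesis $\beta\ne \frac{j\pi}{L}\coth\frac{j\pi^2}{L}$ makes the bracket nonzero, so $a_{j,i}=0$.
\end{itemize}
Hence $w\equiv0$. The class of solutions is, say, $C^{2}$ up to the boundary with periodicity, so that the Fourier expansion and the termwise ODE are justified.

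\textbf{Existence.} For $f$ in $L^2(0,L)$, or in $C^{1,\beta}$ for the application, define $w$ by the formulas already derived: $w_0=-\frac{f_0}{\beta\pi}y_2$ and $w_{j,i}=a_{j,i}\sinh\frac{j\pi y_2}{L}$, with
\[
a_{j,i}=\frac{f_{j,i}}{\frac{j\pi}{L}\cosh\frac{j\pi^2}{L}-\beta\sinh\frac{j\pi^2}{L}} .
\]
The main point is a uniform lower bound on the denominators, and I expect this to be the only real obstacle. Write
\[
d_j=\sinh\tfrac{j\pi^2}{L}\Bigl(\tfrac{j\pi}{L}\coth\tfrac{j\pi^2}{L}-\beta\Bigr).
\]
Since $\frac{j\pi}{L}\coth\frac{j\pi^2}{L}\to\infty$ as $j\to\infty$, only finitely many $j$ have this quantity near $\beta$, and each of those is nonzero by hypothesis. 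For large $j$ the factor in parentheses is at least $\frac{j\pi}{2L}$. Hence there is $c>0$ with
\[
|d_j|\ge c\,(1+j)\sinh\tfrac{j\pi^2}{L}\quad\text{for all } j\ge1 .
\]

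\textbf{Convergence and regularity.} From this bound,
\[
|w_{j,i}(y_2)|\le \frac{C|f_{j,i}|}{1+j}\,\frac{\sinh(j\pi y_2/L)}{\sinh(j\pi^2/L)}\le \frac{C|f_{j,i}|}{1+j}.
\]
Consequently the series converges in $H^1$, and in fact in $H^{s+1}(D_L)$ whenever $f\in H^{s}$. In the interior $y_2<\pi$ the ratio of $\sinh$ factors decays exponentially in $j$, so $w$ is smooth and harmonic there. The boundary conditions hold termwise: $w=0$ at $y_2=0$, and on $y_2=\pi$ the series for $\partial_{y_2}w-\beta w$ equals the Fourier series of $f$, with the zero mode handled by the nonlocal term. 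This gives a unique solution, which proves the proposition. The H\"older (Schauder) estimates needed later would then follow separately, for instance by viewing the problem as the Robin problem $\partial_\nu w-\beta w=f+\text{const}$ and applying standard oblique-derivative theory.
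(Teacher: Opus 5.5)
Your argument is correct and is essentially the paper's own: the paper proves the proposition by the same mode-by-mode Fourier computation that precedes its statement, and you complete that computation with the uniqueness check, the uniform lower bound $|d_j|\ge c(1+j)\sinh\frac{j\pi^2}{L}$, and the convergence of the series, all of which the paper leaves implicit. One minor slip is inherited from the paper: the $L$-periodic modes are $\cos\frac{2n\pi y_1}{L}$ and $\sin\frac{2n\pi y_1}{L}$, since for odd $j$ the function $\sin\frac{j\pi y_1}{L}$ is not $L$-periodic and does not have zero mean on $(0,L)$; however, the nonresonance hypothesis is imposed for every $j\ge1$, in particular for $j=2n$, so your argument goes through unchanged once the indexing is corrected.
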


\subsection{Schauder estimate}

We derive the Schauder estimate for $w$ in terms of $f$.

First, from Theorems~6.26 and 6.2 and Lemma~6.4 in \cite{GT},
we obtain
\begin{equation}\label{1-20-6}
\|w\|_{C^{2,\gamma}(D_L)}
\le C\|w\|_{L^{\infty}(D_L)}
+C\|f+\frac1{L}\int_0^L
\frac{\partial w}{\partial y_2}\|_{C^{1,\gamma}(0,L)}
\end{equation}
We also have
\[
\|\frac1{L}\int_0^L
\frac{\partial w}{\partial y_2}\|_{C^{1,\gamma}(0,L)}
=
\|\frac1{L}\int_0^L
\frac{\partial w}{\partial y_2}\|_{L^{\infty}(0,L)}
\le \|w\|_{C^{1}(D_L)},
\]
which, together with \eqref{1-20-6}, gives
\begin{equation}\label{2-20-6}
\|w\|_{C^{2,\gamma}(D_L)}
\le C\|w\|_{C^{1}(D_L)}
+C\|f\|_{C^{1,\gamma}(0,L)}
\end{equation}

\begin{proposition}\label{p2-25-8}
If $\beta>0$ and
$\beta\ne\frac{j\pi}{L}\coth\frac{j\pi^2}{L}$
for $j\ge 1$, then
\begin{equation}\label{n2-20-6}
\|w\|_{C^{2,\gamma}(D_L)}
\le C\|f\|_{C^{1,\gamma}(0,L)}
\end{equation}
\end{proposition}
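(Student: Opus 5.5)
The plan is to remove the $C^1$ term in \eqref{2-20-6}. This is the usual compactness (blow-up) argument, with uniqueness supplied by Proposition~\ref{p1-25-8}. Since the constant in \eqref{2-20-6} is fixed, it suffices to prove the weaker bound
\[
\|w\|_{C^{1}(D_L)}\le C\|f\|_{C^{1,\gamma}(0,L)}.
\]

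Suppose this fails. Then there are solutions $w_n$ of \eqref{2-19-6} with data $f_n$ such that $\|w_n\|_{C^1(D_L)}=1$ and $\|f_n\|_{C^{1,\gamma}(0,L)}\to0$. By \eqref{2-20-6} the sequence $w_n$ is bounded in $C^{2,\gamma}(D_L)$. By Arzel\`a--Ascoli, after passing to a subsequence, $w_n\to w$ in $C^2(\overline{D_L})$, and the same holds for the periodic extensions.

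Next I pass to the limit in each condition of \eqref{2-19-6}: harmonicity, periodicity, $w(y_1,0)=0$, and the boundary condition on $\{y_2=\pi\}$. The nonlocal term is harmless here. The map $w\mapsto \frac1L\int_0^L\partial_{y_2}w(t,\pi)\,dt$ is continuous with respect to $C^1$ convergence, so it converges as well. The limit $w$ therefore solves \eqref{2-19-6} with $f=0$, and it also satisfies $\|w\|_{C^1(D_L)}=1$.

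Now I apply uniqueness. Under the hypotheses $\beta>0$ and $\beta\ne\frac{j\pi}{L}\coth\frac{j\pi^2}{L}$, Proposition~\ref{p1-25-8} gives $w\equiv0$. Concretely, in the Fourier expansion \eqref{3-19-6}, the zero mode satisfies $w_0=a y_2$ with $-\beta a\pi=0$, so $a=0$ because $\beta>0$. Each nonzero mode satisfies $a_{j,i}\bigl(\frac{j\pi}{L}\cosh\frac{j\pi^2}{L}-\beta\sinh\frac{j\pi^2}{L}\bigr)=0$, and the nonresonance condition makes the factor nonzero, so $a_{j,i}=0$. This contradicts $\|w\|_{C^1}=1$. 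The weaker bound follows, and inserting it into \eqref{2-20-6} gives \eqref{n2-20-6}.

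The only point needing care is that the non-resonance values accumulate. As $j\to\infty$, $\frac{j\pi}{L}\coth\frac{j\pi^2}{L}\to\infty$, so only finitely many $j$ come close to $\beta$. The compactness argument does not need a uniform lower bound on the denominators, because it only uses uniqueness for the limit problem.

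As a sanity check, one could instead bound the Fourier multipliers directly. For large $j$ the denominator behaves like $\frac{j\pi}{L}\sinh\frac{j\pi^2}{L}$, so the $C^1$ norm is controlled by $\|f\|_{C^{0}}$ or a Sobolev norm of $f$. I would keep the contradiction argument as the main route, since it is cleaner.
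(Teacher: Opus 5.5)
Your argument is correct and is essentially the paper's proof. Both reduce to the $C^1$ bound via \eqref{2-20-6}, use the uniform $C^{2,\gamma}$ bound to get $C^2$ convergence of a contradicting sequence, and invoke Proposition~\ref{p1-25-8} for the limit problem. Your explicit checks of the nonlocal term's continuity and of the Fourier-mode uniqueness only fill in details the paper leaves implicit.

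One small caveat concerns your closing aside only. $\|w\|_{C^1}$ is not controlled by $\|f\|_{C^0}$. On $\{y_2=\pi\}$ the multiplier is $1/\bigl(\frac{j\pi}{L}\coth\frac{j\pi^2}{L}-\beta\bigr)\sim L/(j\pi)$, so $\partial_{y_1}w(\cdot,\pi)$ behaves like the conjugate function of $f$, which is not bounded on $C^0$. A H\"older or $H^s$ ($s>\tfrac12$) norm is needed, but this does not affect your main route.
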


\begin{proof}
From \eqref{2-20-6}, it is enough to prove
\begin{equation}\label{3-20-6}
\|w\|_{C^{1}(D_L)}
\le C\|f\|_{C^{1,\gamma}(0,L)}.
\end{equation}

Suppose that there are $f_n$ with
$\|f_n\|_{C^{1,\alpha}(0,L)}\to 0$ and $w_n$ with
$\|w_n\|_{C^{1}(D_L)}=1$, satisfying \eqref{2-19-6}.
From \eqref{2-20-6}, we obtain
\[
\|w_n\|_{C^{2,\gamma}(D_L)}\le C.
\]
Up to a subsequence, $w_n\to w$ in $C^2(D_L)$, and the limit
satisfies \eqref{2-19-6} with $f=0$.
Since $\beta\ne 0$ and
$\beta\ne\frac{j\pi}{L}\coth\frac{j\pi^2}{L}$
for $j\ge 1$, Proposition~\ref{p1-25-8} gives $w=0$.
This contradicts $\|w\|_{C^{1}(D_L)}=1$.
\end{proof}

\section{critical points of the K.-R. functions}
\label{sec:appendix-KR}

We study non-degenerate critical points of the fixed-domain K.-R.
functions after removing the common horizontal translation.
In this appendix, \(\kappa_j>0\) are fixed vortex-strength parameters,
and \((-1)^{\tau_j}\kappa_j\) are the signed strengths.
They are held fixed when differentiating.
If \(x_j=(x_{j1},\pi/2)\), symmetry gives
\begin{equation}\label{eq:B-midline-splitting}
\frac{\partial\mathcal F}{\partial x_{j2}}=0,
\qquad
\frac{\partial^2\mathcal F}
{\partial x_{j2}\partial x_{m1}}=0,
\end{equation}
where \(\mathcal F\) is either \(\widetilde{\mathcal K}_{01}\)
or \(\widetilde{\mathcal K}_{p,01}\).
Thus the reduced Hessian splits into horizontal and vertical
blocks. We remove the common horizontal translation and use the
ordered-pair convention $\sum_{i\ne j}$, as in the main text.

\subsection{The limiting K.-R. function}
\label{subsec:B1-limiting-KR}

In this and the next subsection, we use the ordered-pair
convention of the main text. Let
\[
\widetilde{\mathcal K}_{01}(\mathbf{x})
=
\mathcal K(\mathbf{x})
-2\sum_{j=1}^k(-1)^{\tau_j}\kappa_j\psi_{01}(x_j),
\]
where
\begin{equation}\label{eq:B1-KR}
\mathcal K(\mathbf{x})
=
\sum_{j=1}^k\kappa_j^2R(x_j)
-\sum_{i\ne j}
(-1)^{\tau_i+\tau_j}\kappa_i\kappa_jG(x_i,x_j),
\end{equation}
and
\begin{equation}\label{eq:B1-Green-Robin}
G(y,x)
=
\frac1{4\pi}
\ln\left(
1+\frac{2\sin x_2\sin y_2}
{\cosh(y_1-x_1)-\cos(y_2-x_2)}
\right),
\qquad
R(x)=-\frac1{2\pi}\ln(2\sin x_2).
\end{equation}
We take
\[
\tau_1=\tau_3=1,\qquad \tau_2=0,\qquad
\psi_{01}(x)
=
\gamma\left\{
\frac{\pi^2}{8}
-\frac12\left(x_2-\frac\pi2\right)^2
\right\},
\]
and
\begin{equation}\label{eq:B1-configuration}
x_1=\left(-a,\frac\pi2\right),\qquad
x_2=\left(0,\frac\pi2\right),\qquad
x_3=\left(b,\frac\pi2\right),
\qquad
a=tb,\quad t>0,\quad t\ne1.
\end{equation}
Here \(t\ne1\) gives an asymmetric configuration.

Let \(\kappa_2=1\). We determine \(\kappa_1,\kappa_3\).
The part of \(\mathcal K\) that depends on the horizontal gaps is
\begin{equation}\label{eq:B1-horizontal-function}
\begin{aligned}
g(a,b)
&=
2\kappa_1\Phi(a)+2\kappa_3\Phi(b)
-2\kappa_1\kappa_3\Phi(a+b),\\
\Phi(r)
&:=
\frac1{4\pi}
\log\left(1+\frac2{\cosh r-1}\right).
\end{aligned}
\end{equation}
Since
\[
1+\frac2{\cosh r-1}=\coth^2\frac r2,
\qquad
\Phi'(r)=-\frac1{2\pi\sinh r},
\]
we have
\begin{equation}\label{eq:B1-critical-equations}
\begin{aligned}
\partial_ag
&=
-\frac{\kappa_1}{\pi\sinh a}
+\frac{\kappa_1\kappa_3}{\pi\sinh(a+b)},\\
\partial_bg
&=
-\frac{\kappa_3}{\pi\sinh b}
+\frac{\kappa_1\kappa_3}{\pi\sinh(a+b)}.
\end{aligned}
\end{equation}
From \(\partial_ag=\partial_bg=0\), we obtain
\begin{equation}\label{eq:B1-strengths}
\kappa_1=\frac{\sinh(a+b)}{\sinh b},
\qquad
\kappa_2=1,
\qquad
\kappa_3=\frac{\sinh(a+b)}{\sinh a}.
\end{equation}
For \(a=tb\), as \(b\to0^+\), we obtain
\begin{equation}\label{eq:B1-kappa-expansions}
\begin{aligned}
\kappa_1
&=(t+1)
\left(
1+\frac{t^2+2t}{6}b^2+O_t(b^4)
\right),\\
\kappa_3
&=\frac{t+1}{t}
\left(
1+\frac{2t+1}{6}b^2+O_t(b^4)
\right).
\end{aligned}
\end{equation}

After removing the common horizontal translation, we use
\((a,b,x_{12},x_{22},x_{32})\) as coordinates.
We hold the strengths fixed when taking the Hessian.
At \(x_{12}=x_{22}=x_{32}=\pi/2\), we obtain
\begingroup
\small
\setlength{\arraycolsep}{2.6pt}
\begin{equation}\label{eq:B1-Hessian}
\nabla^2\widetilde{\mathcal K}_{01}
=
\begin{pmatrix}
\dfrac1{\pi\sinh^2a}
&
-\dfrac{\cosh(a+b)}{\pi\sinh a\sinh b}
&0&0&0
\\[2.2ex]
-\dfrac{\cosh(a+b)}{\pi\sinh a\sinh b}
&
\dfrac1{\pi\sinh^2b}
&0&0&0
\\[2.2ex]
0&0&A_{11}&A_{12}&A_{13}\\
0&0&A_{12}&A_{22}&A_{23}\\
0&0&A_{13}&A_{23}&A_{33}
\end{pmatrix},
\end{equation}
\endgroup
where
\begin{equation}\label{eq:B1-vertical-entries}
\begin{aligned}
A_{11}
&=
\frac1{2\pi}
\left(
\kappa_1^2-\frac2{\sinh^2a}
\right)-2\gamma\kappa_1,
&
A_{12}
&=\frac{\kappa_1}{\pi\sinh^2a},
\\
A_{22}
&=
\frac1{2\pi}
\left\{
1-2(\coth a+\coth b)^2
\right\}+2\gamma,
&
A_{23}
&=\frac{\kappa_3}{\pi\sinh^2b},
\\
A_{33}
&=
\frac1{2\pi}
\left(
\kappa_3^2-\frac2{\sinh^2b}
\right)-2\gamma\kappa_3,
&
A_{13}
&=-\frac1{\pi\sinh a\sinh b}.
\end{aligned}
\end{equation}
Let \(H_{\rm h}\) and \(H_{\rm v}\) denote the horizontal and
vertical blocks. The coefficient of \(b^{-2}\) in \(H_{\rm v}\)
has rank one. For fixed \(t>0\) and \(\gamma_0>0\), we expand
the determinants and obtain, uniformly for
\(|\gamma|\leq\gamma_0\),
\begin{equation}\label{eq:B1-determinants}
\begin{aligned}
\det H_{\rm h}
&=
-\frac{\sinh^2(a+b)}
{\pi^2\sinh^2a\,\sinh^2b}
=
-\frac{(t+1)^2}{\pi^2t^2}b^{-2}+O_t(1),
\\
\det H_{\rm v}
&=
-\frac{(t+1)^2}{2\pi^3t^4}b^{-2}
\bigl(t^2+t+1-6\pi t\gamma\bigr)
\bigl(t^2+t+1-4\pi t\gamma\bigr)
+O_{t,\gamma_0}(1),
\\
\det\nabla^2\widetilde{\mathcal K}_{01}
&=
\frac{(t+1)^4}{2\pi^5t^6}b^{-4}
\bigl(t^2+t+1-6\pi t\gamma\bigr)
\bigl(t^2+t+1-4\pi t\gamma\bigr)
+O_{t,\gamma_0}(b^{-2}).
\end{aligned}
\end{equation}
Thus, for each fixed \(t>0\), \(t\ne1\), there exist
\(b_0,\gamma_0>0\) such that this critical point is
non-degenerate whenever
\[
0<b<b_0,\qquad |\gamma|<\gamma_0.
\]
For each such fixed \(t,b,\gamma\), the \(C^2\)-convergence
of the periodic functional and the implicit function theorem,
with the common horizontal translation removed, show that
\(\widetilde{\mathcal K}_{p,01}\) has a nearby non-degenerate
critical point for all \(L\geq L_0(t,b,\gamma)\).

\subsection{The periodic K.-R. function}
\label{subsec:B2-periodic-KR}

We use the ordered-pair convention in \eqref{eq:B1-KR}
and replace \(G,R\) with \(G_p,R_p\). Thus
\[
\widetilde{\mathcal K}_{p,01}(\mathbf{x})
=
\mathcal K_p(\mathbf{x})
-2\sum_{j=1}^k(-1)^{\tau_j}\kappa_j\psi_{01}(x_j).
\]
Let \(\tau_1=\tau_2=\tau_3=0\), \(\kappa_2=1\), and
\begin{equation}\label{eq:B2-configuration}
\begin{gathered}
\ell:=\frac L2,\qquad y_*:=\frac\pi2,\qquad
x_1=(-a,y_*),\quad x_2=(0,y_*),\quad x_3=(b,y_*),\\
b=\ell-\frac a3,\qquad
s:=a+b=\ell+\frac{2a}{3},
\qquad 0<a<\frac L4.
\end{gathered}
\end{equation}
The three periodic gaps \(a,b,L-s\) are pairwise distinct,
so the configuration has no reflection axis.

We use
\begin{equation}\label{eq:B2-periodic-kernels}
\begin{aligned}
F_L(r)
&:=\sum_{n\in\mathbb Z}\frac1{\sinh(r+nL)},\\
A_L(r)
&:=-F_L'(r)
=\sum_{n\in\mathbb Z}
\frac{\cosh(r+nL)}{\sinh^2(r+nL)},
\qquad
B_L(r):=\sum_{n\in\mathbb Z}\frac1{\sinh^2(r+nL)}.
\end{aligned}
\end{equation}
Set
\begin{equation}\label{eq:B2-asymptotic-constants}
\begin{aligned}
A_*&:=A_L(\ell)>0,\qquad
B_*:=B_L(\ell),\qquad D_*:=F_L'''(\ell),\\
\rho_L
&:=\partial_{x_2}^2R_p(0,y_*)
=
\frac1{2\pi}
+\frac1\pi\sum_{n\ne0}\frac1{\cosh(nL)+1}>0,\\
f_L
&:=-\frac16
-2\sum_{n=1}^{\infty}
\frac{\cosh(nL)}{\sinh^2(nL)},
\qquad
\vartheta_L:=f_L+\frac{2D_*}{27A_*}.
\end{aligned}
\end{equation}

Let \(\Phi_L(r):=G_p((r,y_*),(0,y_*))\).
The horizontal part of the functional is
\[
g_L(a,b)
=
-2\kappa_1\Phi_L(a)
-2\kappa_3\Phi_L(b)
-2\kappa_1\kappa_3\Phi_L(s).
\]
From \(\Phi_L'=-F_L/(2\pi)\), we obtain the critical point equations
\[
\begin{aligned}
0=\partial_ag_L
&=\frac{\kappa_1}{\pi}
\bigl(F_L(a)+\kappa_3F_L(s)\bigr),\\
0=\partial_bg_L
&=\frac{\kappa_3}{\pi}
\bigl(F_L(b)+\kappa_1F_L(s)\bigr).
\end{aligned}
\]
The vertical equations hold by \eqref{eq:B-midline-splitting}.
Thus we obtain
\begin{equation}\label{eq:B2-kappa-exact}
\kappa_1=-\frac{F_L(b)}{F_L(s)},
\qquad
\kappa_2=1,
\qquad
\kappa_3=-\frac{F_L(a)}{F_L(s)}.
\end{equation}
Since \(F_L(\ell)=0\) and \(F_L'=-A_L<0\) on \((0,L)\),
all three strengths are positive. Using
\[
F_L(a)=a^{-1}+f_La+O_L(a^3),
\qquad
F_L(\ell+t)
=-A_*t+\frac{D_*}{6}t^3+O_L(t^5),
\]
we obtain, as \(a\to0^+\),
\begin{equation}\label{eq:B2-F-Q-expansions}
\begin{aligned}
\kappa_1
&=\frac12+\frac{D_*}{36A_*}a^2+O_L(a^4),\\
\kappa_2&=1,\\
\kappa_3
&=\frac{3}{2A_*a^2}
+\frac{3\vartheta_L}{2A_*}+O_L(a^2).
\end{aligned}
\end{equation}
In particular,
\(0<\kappa_1<\kappa_2<\kappa_3\) for small \(a>0\).

We use \((a,b,x_{12},x_{22},x_{32})\) as local coordinates.
We hold the strengths fixed when differentiating and vary
\(a,b\) independently. We impose \(b=\ell-a/3\)
only at the point of evaluation. By direct differentiation,
we obtain
\begingroup
\scriptsize
\setlength{\arraycolsep}{2pt}
\renewcommand{\arraystretch}{1.1}
\begin{equation}\label{eq:B2-Hessian-asymptotic}
\begin{gathered}
H_{\rm red}(a,\gamma)=
\\[1.2ex]
\begin{pmatrix}
-\dfrac5{4\pi a^2}+O(1)
&
-\dfrac3{4\pi a^2}+O(1)
&0&0&0
\\[1.8ex]
-\dfrac3{4\pi a^2}+O(1)
&
-\dfrac9{4\pi a^2}+O(1)
&0&0&0
\\[1.8ex]
0&0
&
\dfrac5{4\pi a^2}+\gamma+O(1)
&
-\dfrac1{2\pi a^2}+O(1)
&
-\dfrac{3B_*}{4\pi A_*a^2}+O(1)
\\[1.8ex]
0&0
&
-\dfrac1{2\pi a^2}+O(1)
&
\dfrac2{\pi a^2}+2\gamma+O(1)
&
-\dfrac{3B_*}{2\pi A_*a^2}+O(1)
\\[2.8ex]
0&0
&
-\dfrac{3B_*}{4\pi A_*a^2}+O(1)
&
-\dfrac{3B_*}{2\pi A_*a^2}+O(1)
&
\begin{gathered}
\dfrac{9\rho_L}{4A_*^2a^4}
+\dfrac{9\rho_L\vartheta_L}{2A_*^2a^2}
\\[1ex]
+\dfrac9{4\pi a^2}
+\dfrac{3\gamma}{A_*a^2}
+O(1)
\end{gathered}
\end{pmatrix}
\end{gathered}
\end{equation}
\endgroup
For each fixed \(\gamma_0>0\), all remainders are
\(O_{L,\gamma_0}(1)\), uniformly for
\(0\leq\gamma\leq\gamma_0\).
The mixed horizontal--vertical entries are exactly zero.

Let \(H_{\rm h}\) and \(H_{\rm v}\) denote the two blocks.
We have
\[
\det H_{\rm h}
=
\frac{5\cdot9-3^2}{16\pi^2a^4}+O_L(a^{-2})
\]
and
\[
\det H_{\rm v}
=
\left(
\frac5{2\pi^2a^4}-\frac1{4\pi^2a^4}
\right)
\frac{9\rho_L}{4A_*^2a^4}
+O_{L,\gamma_0}(a^{-6}).
\]
Thus we obtain
\begin{equation}\label{eq:B2-determinants}
\begin{aligned}
\det H_{\rm h}(a)
&=\frac9{4\pi^2}a^{-4}+O_L(a^{-2}),\\
\det H_{\rm v}(a,\gamma)
&=\frac{81\rho_L}{16\pi^2A_*^2}a^{-8}
+O_{L,\gamma_0}(a^{-6}),\\
\det H_{\rm red}(a,\gamma)
&=\frac{729\rho_L}{64\pi^4A_*^2}a^{-12}
+O_{L,\gamma_0}(a^{-10}).
\end{aligned}
\end{equation}
Thus the critical point is non-degenerate for small \(a>0\),
uniformly for bounded \(\gamma\geq0\).

In fact, non-degeneracy holds for every \(0<a<L/4\) and
\(\gamma\geq0\). For \((\xi,\zeta)\ne(0,0)\), direct differentiation gives
\[
\begin{aligned}
(\xi,\zeta)H_{\rm h}(\xi,\zeta)^{\mathsf T}
=-\frac1\pi\bigl\{&
\kappa_1A_L(a)\xi^2+\kappa_3A_L(b)\zeta^2\\
&+\kappa_1\kappa_3A_L(s)(\xi+\zeta)^2\bigr\}<0.
\end{aligned}
\]
Writing \(r_{ij}=x_{i1}-x_{j1}\), the exact vertical entries are
\[
\begin{aligned}
(H_{\rm v})_{ii}
&=\rho_L\kappa_i^2
+\frac{\kappa_i}{\pi}\sum_{j\ne i}\kappa_jA_L(r_{ij})
+2\gamma\kappa_i,\\
(H_{\rm v})_{ij}
&=-\frac{\kappa_i\kappa_j}{\pi}B_L(r_{ij}),
\qquad i\ne j.
\end{aligned}
\]
Since \(A_L(r)>B_L(r)>0\) for \(r\notin L\mathbb Z\), we have
\[
\begin{aligned}
z^{\mathsf T}H_{\rm v}z
={}&\rho_L\sum_i\kappa_i^2z_i^2+2\gamma\sum_i\kappa_i z_i^2\\
&+\frac1\pi\sum_{i<j}\kappa_i\kappa_j
\bigl\{B_L(r_{ij})(z_i-z_j)^2
+(A_L(r_{ij})-B_L(r_{ij}))(z_i^2+z_j^2)\bigr\}>0
\end{aligned}
\]
for every \(z\ne0\). Thus the reduced Hessian has three positive
and two negative eigenvalues for every \(\gamma\geq0\).
This argument does not require \(\gamma\) to remain bounded.

To apply these fixed-domain computations to the concentrated solutions,
the signed strengths must be matched to the actual circulations.
For the radial vortex approximations, these are
\[
m_{\varepsilon,j}
=-\frac{2\pi(-1)^{\tau_j}a_j}{\ln s_{\varepsilon,j}}
\sim\frac{2\pi(-1)^{\tau_j}a_j}{|\ln\varepsilon|}.
\]
Thus the fixed strengths in the functions above cannot be identified
directly with the thresholds in \(f_j\). Normalizing the circulations by
\(2\pi/|\ln\varepsilon|\) and dividing the K.-R. function by
\((2\pi/|\ln\varepsilon|)^2\) changes the background coefficient from
\(2\) to \(|\ln\varepsilon|/\pi\). The background correction depending
on the free boundary must also be included before applying these
computations to the full reduced equations.

\subsection{Degeneracy for fixed
\texorpdfstring{\(a\) and \(b\)}{a and b}}
\label{subsec:B3-fixed-gaps}

We now fix
\[
x_1=\left(-a,\frac\pi2\right),\qquad
x_2=\left(0,\frac\pi2\right),\qquad
x_3=\left(b,\frac\pi2\right),
\qquad s:=a+b,
\]
where
\begin{equation}\label{eq:B3-geometric-range}
0<a,b<\frac L2,
\qquad 0<s<L,\qquad \gamma\in\mathbb R,
\end{equation}
and let \(\kappa_2=1\) and \(\tau_2=0\).
We use the periodic version of \eqref{eq:B1-KR}, with the
background term \(-2\sum_j(-1)^{\tau_j}\kappa_j\psi_{01}(x_j)\),
and remove the common horizontal translation.
The strengths are held fixed when taking the Hessian.
The two cases in Appendices \ref{subsec:B1-limiting-KR}
and \ref{subsec:B2-periodic-KR} give the configurations used above.
We also record the following statements for fixed \(a\) and \(b\).

\begin{proposition}\label{p:B3-degeneracy}
The following statements hold.
\begin{enumerate}[label=\rm(\roman*)]
\item If \(s=L/2\), there are no finite nonzero signed strengths
for the first and third vortices for which the three points form
a horizontal critical configuration, for any \(\gamma\in\mathbb R\).

\item If \(s\ne L/2\), the unique critical signed strengths are
\begin{equation}\label{eq:B3-strengths}
(-1)^{\tau_1}\kappa_1=-r,
\qquad \kappa_2=1,
\qquad (-1)^{\tau_3}\kappa_3=-q,
\qquad
r:=\frac{F_L(b)}{F_L(s)},
\quad q:=\frac{F_L(a)}{F_L(s)}.
\end{equation}
Here \(\kappa_1=|r|\), \(\kappa_3=|q|\), and
\(\tau_1=\tau_3=1\) if \(s<L/2\), while
\(\tau_1=\tau_3=0\) if \(s>L/2\).
For fixed \(a\) and \(b\), the number of distinct real values of
\(\gamma\) for which \(H_{\rm red}(a,\gamma)\) is degenerate
is at least one and at most three.

\item For every \(s\ne L/2\), the reduced Hessian
\(H_{\rm red}(a,0)\) is nondegenerate.
It has three positive and two negative eigenvalues.
\end{enumerate}
\end{proposition}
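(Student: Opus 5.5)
We work directly with the explicit formulas of Appendix~\ref{subsec:B2-periodic-KR}, which were derived for general $a,b$ before the special choice $b=\ell-a/3$ was imposed. With the configuration $x_1=(-a,\pi/2)$, $x_2=(0,\pi/2)$, $x_3=(b,\pi/2)$, the vertical equations hold automatically by \eqref{eq:B-midline-splitting}. The horizontal critical equations, with signed strengths $m_1,m_2=1,m_3$, take the form
\[
F_L(a)+m_3F_L(s)=0,\qquad F_L(b)+m_1F_L(s)=0 .
\]
This is the same computation as for $g_L$, with the signs now carried by $m_j$. The background term $\psi_{01}$ depends only on $x_{j2}$, so $\gamma$ does not enter these equations, which settles the ``for any $\gamma$'' clause.

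For (i) and (ii), recall that $F_L$ is odd, $L$-periodic, decreasing on $(0,L)$ with $F_L'=-A_L<0$, and vanishes exactly at $\ell=L/2$ there. If $s=L/2$, then $F_L(s)=0$, and the equations force $F_L(a)=F_L(b)=0$. This is impossible, since $0<a,b<L/2$ gives $F_L(a),F_L(b)>0$. Hence (i) holds. If $s\ne L/2$, we solve uniquely to get $m_1=-r$ and $m_3=-q$, with $r,q$ as in \eqref{eq:B3-strengths}. Both $F_L(a)$ and $F_L(b)$ are positive, so the sign of $r$ and $q$ is the sign of $F_L(s)$. This is positive when $s<L/2$, giving $\tau_1=\tau_3=1$, and negative when $s>L/2$, giving $\tau_1=\tau_3=0$.

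For the count of degenerate $\gamma$, we use the block structure: $\det H_{\rm red}=\det H_{\rm h}\cdot\det H_{\rm v}(\gamma)$. Here $H_{\rm h}$ does not depend on $\gamma$, while $H_{\rm v}(\gamma)=H_{\rm v}(0)+2\gamma\,\mathrm{diag}(m_1,m_2,m_3)$, using the signed version of the diagonal term $2\gamma\kappa_i$. By (iii), $\det H_{\rm h}\neq0$. Moreover, $\det H_{\rm v}(\gamma)$ is a cubic polynomial in $\gamma$ with leading coefficient $8m_1m_2m_3=8rq\ne0$. A real cubic has between one and three distinct real roots, which gives the claimed bounds.

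For (iii), we redo the quadratic form computation of Appendix~\ref{subsec:B2-periodic-KR} with signed strengths and $\gamma=0$. The form is
\[
\xi^{\mathsf T}H_{\rm h}\xi=-\frac1\pi\bigl\{m_1A_L(a)\xi^2+m_3A_L(b)\zeta^2+m_1m_3A_L(s)(\xi+\zeta)^2\bigr\},
\]
which holds up to the sign convention for $m_1m_3$ coming from the ordered-pair sum.

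When $s>L/2$, all $m_j>0$ and $A_L>B_L>0$, so the argument in that appendix applies verbatim: $H_{\rm h}<0$ and $H_{\rm v}>0$.

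When $s<L/2$, we have $m_1,m_3<0$. Here we plan to use the substitution $z_1\mapsto -z_1$, $z_3\mapsto -z_3$, which conjugates the sign pattern. In the vertical form this makes every off-diagonal coefficient $-m_im_jB_L$ appear as $+|m_i||m_j|B_L(z_i-z_j)^2$-type terms. We then hope to regroup into $\rho_L\sum m_i^2z_i^2$ plus terms with coefficients $A_L\pm B_L$.

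The main obstacle is that the diagonal terms $\frac{m_i}{\pi}\sum_jm_jA_L$ can become negative, for instance for $i=2$ with $m_1,m_3<0$. For this case we would first try to use the critical equations to rewrite $m_1$ and $m_3$ via $F_L$. We would then exploit the identity $A_L=-F_L'$ together with the relation $B_L\le A_L$, which is sharp near $0$. If that fails, we would compute $\det H_{\rm h}$ and $\det H_{\rm v}(0)$ in closed form, show that both are nonzero with the right signs (Sylvester's criterion on $-H_{\rm h}$ and $H_{\rm v}$), and so obtain signature $(3,2)$.
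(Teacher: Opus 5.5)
Your arguments for (i), for the strengths and signs in (ii), for the cubic count in $\gamma$, and for (iii) when $s>L/2$ coincide with the paper's. The gap is (iii) for $s<L/2$, which you leave open. Your $\gamma$-count uses (iii) to get $\det H_{\rm h}\neq0$, so it is also incomplete in that regime.

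Your fallback, Sylvester's criterion applied to $-H_{\rm h}$ and $H_{\rm v}(0)$, cannot succeed, because for $s<L/2$ neither block is definite. With $\mu=(-r,1,-q)$ and $J=F_L/A_L$,
\[
\det H_{\rm h}=\frac{rqA_L(a)A_L(b)}{\pi^2}\Bigl(1-\frac{J(a)+J(b)}{J(s)}\Bigr)<0,
\qquad
V_{22}=\rho_L-\frac{rA_L(a)+qA_L(b)}{\pi}<0 .
\]
So the signature $(3,2)$ splits as one positive and one negative eigenvalue of $H_{\rm h}$, plus two positive and one negative eigenvalue of $V=H_{\rm v}(0)$. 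It does not split as $H_{\rm h}<0$, $H_{\rm v}>0$. For the same reason, your planned regrouping of $z^{\mathsf T}Vz$ into nonnegative terms, after the sign flip and with $B_L\le A_L$, is aimed at a positivity that is false.

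Three ingredients are missing.
\begin{enumerate}
\item[(a)] $\det H_{\rm h}<0$ amounts to the strict subadditivity $J(s)<J(a)+J(b)$. The paper obtains this from strict concavity of $J$ on $(0,L/2)$ together with $J(0+)=0$. Concavity uses the identity $A_L^2=F_L^4+c_LF_L^2+d_L$, with $c_L=-6f_L$ and $d_L=A_*^2$. That identity comes from the double periodicity of $F_L$: periods $L$ and $2\pi i$, and $F_L(z+\pi i)=-F_L(z)$.
\item[(b)] $V$ has at least two positive eigenvalues because $V+H_{\rm full}=\operatorname{diag}(\mu)\,S\,\operatorname{diag}(\mu)$, where $S_{ij}=\frac1\pi\sum_n(\cosh(r_{ij}+nL)+1)^{-1}$. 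The matrix $S$ is positive definite, since its kernel has positive Fourier coefficients and the points are distinct mod $L$. Meanwhile $-H_{\rm full}$ has one positive, one negative and one zero eigenvalue, so it is positive semidefinite on a two-dimensional subspace; on that subspace $V$ is positive definite.
\item[(c)] The negative direction is $e_2$. The addition formula $F_L(s)N=F_L(a)^2F_L(b)^2-d_L$, with $N=F_L(a)A_L(b)+F_L(b)A_L(a)$, gives $N/F_L(s)>4c_L$. The defining series give $c_L>2\pi\rho_L$. Together these yield $V_{22}<0$.
\end{enumerate}
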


\begin{proof}
Write \(F=F_L\), \(A=A_L\), \(B=B_L\), and
\(\mu=(-r,1,-q)\).
The horizontal critical point equations are
\[
F(a)+\mu_3F(s)=0,\qquad
F(b)+\mu_1F(s)=0.
\]
Since \(F>0\) on \((0,L/2)\) and \(F(L/2)=0\),
these equations give (i) and the strengths in (ii).

We first check that the horizontal block is nonsingular.
For \(s>L/2\), all three signed strengths are positive, and
\[
\begin{aligned}
(\xi,\zeta)H_{\rm h}(\xi,\zeta)^\top
=-\frac1\pi\bigl[
&\mu_1A(a)\xi^2+\mu_3A(b)\zeta^2
+\mu_1\mu_3A(s)(\xi+\zeta)^2\bigr]<0
\end{aligned}
\]
for \((\xi,\zeta)\ne0\).
For \(s<L/2\), we use
\[
c_L:=-6f_L>0,\qquad d_L:=A_*^2>0,
\qquad
A^2=F^4+c_LF^2+d_L.
\]
To obtain the last identity, extend \(F\) to the complex plane
and use its periods \(L,2\pi i\) and
\(F(z+\pi i)=-F(z)\).
The expansion \(F(z)=z^{-1}+f_Lz+O(z^3)\) shows that
\((F')^2-F^4-c_LF^2\) has no poles and is constant.
Its value at \(L/2\) is \(d_L\).
Thus, for \(J=F/A\),
\[
J''
=-\frac{2F(c_LF^4+4d_LF^2+c_Ld_L)}
{(F^4+c_LF^2+d_L)^{3/2}}<0
\quad\text{on }(0,L/2).
\]
Since \(J(0+)=0\), we have \(J(s)<J(a)+J(b)\).
Direct differentiation gives
\[
H_{\rm h}=\frac1\pi
\begin{pmatrix}
rA(a)-rqA(s)&-rqA(s)\\
-rqA(s)&qA(b)-rqA(s)
\end{pmatrix},
\]
and hence
\[
\det H_{\rm h}
=\frac{rqA(a)A(b)}{\pi^2}
\left(1-\frac{J(a)+J(b)}{J(s)}\right)<0.
\]
Thus \(H_{\rm h}\) has one positive and one negative eigenvalue
when \(s<L/2\).

Let \(V\) denote the vertical block at \(\gamma=0\), and put
\(r_{ij}=x_{i1}-x_{j1}\).
Its entries are
\[
V_{ii}=\rho_L\mu_i^2
+\frac{\mu_i}{\pi}\sum_{j\ne i}\mu_jA(r_{ij}),
\qquad
V_{ij}=-\frac{\mu_i\mu_j}{\pi}B(r_{ij})\quad(i\ne j).
\]
If \(s>L/2\), then \(\mu_i>0\) and \(A>B>0\), so
\[
\begin{aligned}
z^\top Vz
={}&\rho_L\sum_i\mu_i^2z_i^2\\
&+\frac1\pi\sum_{i<j}\mu_i\mu_j
\bigl[(A(r_{ij})-B(r_{ij}))(z_i^2+z_j^2)
+B(r_{ij})(z_i-z_j)^2\bigr]>0
\end{aligned}
\]
for \(z\ne0\).

Suppose that \(s<L/2\).
Let \(H_{\rm full}\) be the horizontal Hessian before removing
the common translation. Then
\[
V+H_{\rm full}
=\operatorname{diag}(\mu_i)S\operatorname{diag}(\mu_i),
\qquad
S_{ij}=\frac1\pi\sum_{n\in\mathbb Z}
\frac1{\cosh(r_{ij}+nL)+1}.
\]
The periodic kernel defining \(S\) has Fourier coefficients
\[
\widehat S_0=\frac2{\pi L},\qquad
\widehat S_n=\frac{2k_n}{L\sinh(\pi k_n)}>0,
\qquad k_n=\frac{2\pi n}{L},\quad n\ne0.
\]
The points are distinct modulo \(L\), so \(S\) is positive
definite. Indeed, vanishing of its quadratic form forces
\(\sum_jz_j e^{ik_nx_{j1}}=0\) for every \(n\); taking
\(n=0,1,2\) gives \(z=0\).
Since \(-H_{\rm full}\) has one positive, one negative and one
zero eigenvalue, \(V\) has at least two positive eigenvalues.

To find a negative direction, use the addition identity
\[
F(s)\bigl(F(a)A(b)+F(b)A(a)\bigr)
=F(a)^2F(b)^2-d_L.
\]
This follows by differentiating
\[
W(b)=\frac{F(a)^2F(b)^2-d_L}
{F(a)A(b)+F(b)A(a)}:
\quad W''=2W^3+c_LW,\quad W(0)=F(a),\quad W'(0)=-A(a),
\]
and applying uniqueness to the same equation satisfied by
\(F(a+b)\).
Writing \(N=F(a)A(b)+F(b)A(a)\), we obtain
\[
\frac N{F(s)}
=\frac{N^2}{F(a)^2F(b)^2-d_L}
>\left(\frac{A(a)}{F(a)}+\frac{A(b)}{F(b)}\right)^2
>4c_L.
\]
The defining series give
\[
c_L-2\pi\rho_L
=4\sum_{n=1}^{\infty}
\frac{2\cosh(nL)+1}{\sinh^2(nL)}>0.
\]
Consequently,
\[
V_{22}=\rho_L-\frac{N}{\pi F(s)}<0.
\]
Thus \(V\) has exactly two positive and one negative eigenvalue.
Together with the horizontal block, this proves (iii).

Finally, the vertical block for general \(\gamma\) is
\[
H_{\rm v}(\gamma)
=V+2\gamma\operatorname{diag}(-r,1,-q).
\]
Its determinant is a real cubic polynomial in \(\gamma\)
with leading coefficient \(8rq\ne0\).
Since \(H_{\rm h}\) is nonsingular, the real roots of this
polynomial are exactly the degenerate values of \(\gamma\).
There are at least one and at most three distinct real roots,
which completes the proof of (ii).
\end{proof}

\phantom{s}
\thispagestyle{empty}

%{\bf Acknowledgments.}
%{}
\phantom{s}
\thispagestyle{empty}

%{\bf Acknowledgments.}
%{}

\end{document}